\newcounter{ENUM}
\newcommand{\itm}{\item}
\newenvironment{ilist}{\renewcommand{\theENUM}{\roman{ENUM}}\renewcommand{\itm}{\addtocounter{ENUM}{1}\item[(\theENUM)]}\begin{itemize}\setcounter{ENUM}{0}}{\end{itemize}}
\newenvironment{alist}[1][0]{\renewcommand{\theENUM}{\alph{ENUM}}\renewcommand{\itm}{\addtocounter{ENUM}{1}\item[\theENUM)]}\begin{itemize}\setcounter{ENUM}{#1}}{\end{itemize}}
\newcommand{\nBar}[1]{\overline{#1}}
\def\ds{\displaystyle}
\newcommand{\bm}[1]{{\boldsymbol{#1}}}
\def\0{\bm{0}}
\def\1{\bm{1}}
\def\bc{\bm{c}}
\def\be{\bm{e}}
\def\bh{\bm{h}}
\def\bn{\bm{n}}
\def\t{\bm{t}}
\def\v{\bm{v}}
\def\u{\bm{u}}
\def\w{\bm{w}}
\def\x{\bm{x}}
\def\y{\bm{y}}
\def\z{\bm{z}}
\def\bbeta{\bm{\beta}}
\def\C{{\mathbb C}}
\def\CP{{\mathbb {CP}}}
\def\N{{\mathbb N}}
\def\P{{\mathbb P}}
\def\R{{\mathbb R}}
\def\Z{{\mathbb Z}}
\def\sP{{\mathscr P}}
\def\cN{\mathcal N}
\def\O{{\mathcal O}}
\def\cP{{\mathcal P}}
\def\cQ{{\mathcal Q}}
\def\cS{{\mathcal S}}
\def\supp{\operatorname{supp}}
\def\maxv{\mathrm{maxv}}
\def\minv{\mathrm{minv}}
\def\irr{\mathrm{irr}}
\def\maxv{\mathrm{maxv}}
\def\minv{\mathrm{minv}}
\newtheorem{thm}{Theorem}[section]
\newtheorem{prop}[thm]{Proposition}
\newtheorem{lem}[thm]{Lemma}
\newtheorem{cor}[thm]{Corollary}
\theoremstyle{definition}
\newtheorem{defn}[thm]{Definition}
\newtheorem{ex}[thm]{Example}
\theoremstyle{remark}
\newtheorem{rem}[thm]{Remark}
\numberwithin{equation}{section}
\def\ext{\operatorname{ext}}
\def\0{\bm 0}
\def\l{\ell}
\def\olambda{\overline{\lambda}}
\newcommand{\oo}{\multiput(0,0)(10,0){2}{\circle*{2}}}
\newcommand{\ooo}{\multiput(0,0)(10,0){3}{\circle*{2}}}
\newcommand{\oooo}{\multiput(0,0)(10,0){4}{\circle*{2}}}
\newcommand{\Eeee}{\put(1,0){\line(1,0){8}}}
\newcommand{\eEee}{\put(11,0){\line(1,0){8}}}
\begin{document}
\title{A combinatorial analysis of Severi degrees}
\author{Fu Liu}\thanks{Fu Liu is partially supported by a grant from the Simons Foundation \#245939 and by NSF grant DMS-1265702.} 
\address{Fu Liu, Department of Mathematics, University of California, Davis, One Shields Avenue, Davis, CA 95616 USA.}
\email{fuliu@math.ucdavis.edu}
\date{\today}
\keywords{Severi degree, ($\tau,\bn$)-words, irreducible, quadratic}
\subjclass[2010]{05A15, 14N10}

\begin{abstract}
  Based on results by Brugall\'e and Mikhalkin, Fomin and Mikhalkin give formulas for computing classical Severi degrees $N^{d, \delta}$ using long-edge graphs. In 2012, Block, Colley and Kennedy considered the logarithmic version of a special function associated to long-edge graphs appeared in Fomin-Mikhalkin's formula, and conjectured it to be linear. They have since proved their conjecture. At the same time, motivated by their conjecture, we consider a special multivariate function associated to long-edge graphs that generalizes their function. The main result of this paper is that the multivariate function we define is always linear. A special case of our result gives an independent proof of Block-Colley-Kennedy's conjecture.

  The first application of our linearity result is that by applying it to classical Severi degrees, we recover quadraticity of $Q^{d, \delta}$ and a bound $\delta$ for the threshold of polynomiality of $N^{d, \delta}.$ 
  Next, in joint work with Osserman, we apply the linearity result to a special family of toric surfaces and obtain universal polynomial results having connections to the G\"ottsche-Yau-Zaslow formula. As a result, we provide combinatorial formulas for the two unidentified power series $B_1(q)$ and $B_2(q)$ appearing in the G\"ottsche-Yau-Zaslow formula. 

  The proof of our linearity result is completely combinatorial. 
  We define $\tau$-graphs which generalize long-edge graphs, and a closely related family of combinatorial objects we call $(\tau, \bn)$-words. By introducing height functions and a concept of irreducibility, we describe ways to decompose certain families of $(\tau, \bn)$-words into irreducible words, which leads to the desired results. 
\end{abstract}

\maketitle

\section{Introduction}

\subsection{Background on Severi degrees}

The classical {\it Severi degree}, denoted by $N^{d,\delta},$ is the degree of the Severi variety. It counts the number of curves of degree $d$ with $\delta$ nodes passing through $\displaystyle \frac{d(d+3)}{2}-\delta$ general points in the complex porjective plane $\CP^2$. If $d \ge \delta +2,$ the Severi degree $N^{d, \delta}$ coincides with the Gromov-Witten invariant $N_{d, \frac{(d-1)(d-2)}{2} - \delta},$ which counts maps from curves to the plane. The problem of studying the Severi degrees dates back to late 19th century by Chasles, Zeuthen and Schubert. The modern study of the Severi variety was initiated by Harris' proof of their irreducibility \cite{harris1986}.

In 1994, Di Francesco and Itzykson \cite{DiFraItz1995} conjectured that for fixed $\delta,$ the Severi degree $N^{d, \delta}$ is given by a {\it node polynomial} $N_\delta(d)$ for sufficiently large $d$. In 2009, Fomin and Mikhalkin \cite[Theorem 5.1]{FomMik2010} established the polynomiality of $N^{d, \delta}$ using tropical geometry and floor decomposition. Since then Block has computed the node polynomial $N_\delta(d)$ up to $\delta = 14$ \cite{block2011}. The {\it threshold} of the polynomiality of $N^{d, \delta}$ is the value $d^*=d^*(\delta)$ such that $N^{d, \delta} = N_\delta(d)$ for all $d \ge d^*.$ 
Fomin and Mikhalkin \cite{FomMik2010} showed that $d^* \le 2 \delta;$ Block \cite{block2011} lowered it to $d^* \le \delta;$ and most recently Kleiman and Shende \cite{KleShe2012} proved the bound $d^* \le \lceil \delta/2 \rceil +1$ conjectured by G\"ottsche. 


Instead of restricting the attention to $\CP^2,$ one can ask same question of enumerating curves on other surfaces. Let $L$ be a line bundle on a complex projective smooth surface $S.$ 
We denote by $N^{\delta}(S, L)$ the number of $\delta$-nodal curves in $|L|$ passing through $\dim |L| - \delta$ points in general position. When $S = \CP^2$ and $L = \O_{\CP^2}(d),$ we recover the classical Severi degree $N^{d, \delta}.$ Hence, we can consider $N^{\delta}(S,L)$ to be a {\it generalized Severi degree}. 
In \cite[Conjecture 2.1]{gottsche1998}, G\"ottsche conjectured that for every $\delta$, there exists a universal polynomial $T_\delta(x,y,z,w)$ of degree $\delta$ that computes the numbers $N^\delta(S, L)$ 
by evaluating $T_\delta$ at the four topological numbers of $(S, L)$: $L^2, LK_S, K_S^2$ and $c_2(S),$ provided that the line bundle $L$ is $(5\delta-1)$-very ample. Furthermore, inspired by the Yau-Zaslow formula, G\"ottsche \cite[Conjecture 2.4]{gottsche1998} conjectured the closed form of the generating function of $T_\delta$, which is known as the G\"ottsche-Yau-Zaslow formula. 
Recently, Tzeng \cite{tzeng2012} and Kool-Shende-Thomas \cite{KooSheTho2011} independently proved G\"ottsche's conjectures. Note that in the case of $\CP^2,$ the four topological numbers become: $L^2 = d^2, LK_S = -3d, K_S^2 = 9$ and $c_2(S)=3.$ Thus,  
\begin{equation}\label{equ:T2N}
  T_\delta(d^2, -3d, 9, 3) = N_\delta(d).
\end{equation}

In \cite{gottsche1998}, G\"ottsche discussed a consequence of the G\"ottsche-Yau-Zaslow's formula.
\begin{prop}[\cite{gottsche1998}, Proposition 2.3] \label{prop:logT}
  There exist four universal power series $A_1(t), A_2(t), A_3(t)$ and $A_4(t)$ such that 
\[ \log \left(\sum_{\delta \ge 0} T_\delta(x,y,z,w) t^\delta \right) = x A_1(t) + y A_2(t) + z A_3(t) + w A_4(t).\]
\end{prop}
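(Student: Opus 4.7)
The plan is to establish that the generating function $T(x,y,z,w;t) := \sum_{\delta \ge 0} T_\delta(x,y,z,w)\, t^\delta$ is multiplicative under addition in the four topological variables, and then to conclude log-linearity by a purely formal argument.

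For the multiplicativity I would argue geometrically via disjoint unions. Given smooth projective surfaces $(S_i, L_i)$, $i=1,2$, with each $L_i$ sufficiently very ample, form $S = S_1 \sqcup S_2$ with its induced line bundle $L$. The four topological invariants $(L^2, LK_S, K_S^2, c_2(S))$ add componentwise, and a $\delta$-nodal curve in $|L|$ decomposes (up to the combinatorics of distributing base points between components) into a $\delta_1$-nodal curve in $|L_1|$ together with a $\delta_2$-nodal curve in $|L_2|$, with $\delta_1 + \delta_2 = \delta$. This yields a convolution identity
\[
N^\delta(S, L) \;=\; \sum_{\delta_1 + \delta_2 = \delta} N^{\delta_1}(S_1, L_1)\, N^{\delta_2}(S_2, L_2),
\]
directly analogous to the multiplicativity of $\sum_n \chi(\mathrm{Hilb}^n S)\, t^n$ in $\chi(S)$. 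Invoking G\"ottsche's universal polynomial identity $T_\delta = N^\delta$ (now a theorem of Tzeng and of Kool--Shende--Thomas) and using that the topological tuples $(L^2, LK_S, K_S^2, c_2(S))$ arising from sufficiently very ample $(S, L)$ are Zariski dense in $\C^4$ (by combining explicit families such as blow-ups of $\CP^2$, products of curves of varying genera, and twists of $L$), this promotes to the formal identity
\[
T(x+x',\, y+y',\, z+z',\, w+w';\, t) \;=\; T(x,y,z,w;t)\cdot T(x',y',z',w';t)
\]
in $\C[x,y,z,w,x',y',z',w'][[t]]$. Setting $(x',y',z',w') = (0,0,0,0)$ forces $T(0,0,0,0;t) = 1$, so $\log T$ is well defined with vanishing constant term.

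Taking logarithms now converts multiplicativity into additivity,
\[
\log T(x+x',\, y+y',\, z+z',\, w+w';\, t) = \log T(x,y,z,w;t) + \log T(x',y',z',w';t),
\]
with $\log T(0,0,0,0;t) = 0$. A polynomial $P$ in finitely many variables satisfying $P(u+v) = P(u) + P(v)$ must be $\C$-linear with vanishing constant term: the binomial expansion of $(u+v)^\alpha$ produces nonzero cross terms for every $|\alpha| \ge 2$, and the constant term is killed by $P(0) = 2P(0)$. Applying this coefficient-by-coefficient in $t$ yields
\[
\log T(x,y,z,w;t) \;=\; x A_1(t) + y A_2(t) + z A_3(t) + w A_4(t)
\]
for uniquely determined $A_i(t) \in \C[[t]]$, as claimed.

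The main obstacle is the multiplicativity step: correctly defining $N^\delta$ on disconnected surfaces requires care with the linear system on $S_1 \sqcup S_2$ and with the distribution of base points between components, and one must also exhibit enough geometric families to establish Zariski density of realizable topological tuples in $\C^4$. Once these technical points are in place, the passage from multiplicativity to log-linearity is routine.
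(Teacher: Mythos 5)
The paper does not prove this proposition; it quotes it directly from G\"ottsche (Proposition 2.3 of \cite{gottsche1998}) and treats it as a black box, so there is no in-paper argument to compare against.

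Your overall strategy---establish multiplicativity of $T(x,y,z,w;t)$ under addition in the topological tuple, then deduce linearity of $\log T$ coefficient-by-coefficient in $t$ via the Cauchy functional equation and Zariski density of realizable tuples---is indeed the one that underlies G\"ottsche's argument, and the formal half of it is sound. But the geometric step you flag as ``the main obstacle'' is a genuine gap rather than a routine technicality. The convolution identity
\[ N^\delta(S_1 \sqcup S_2, L_1 \oplus L_2) \;=\; \sum_{\delta_1 + \delta_2 = \delta} N^{\delta_1}(S_1, L_1)\, N^{\delta_2}(S_2, L_2) \]
is not correct under the stated definition of $N^\delta$: since $h^0(L_1 \oplus L_2) = h^0(L_1) + h^0(L_2)$, one has $\dim|L_1\oplus L_2| = \dim|L_1| + \dim|L_2| + 1$, so imposing $\dim|L| - \delta$ general points on $S_1 \sqcup S_2$ is one point too many for any split $(\delta_1,\delta_2)$, and no bookkeeping of how to distribute base points between components repairs the dimension count. (Equivalently, on a disconnected surface the space of effective divisors in $|L|$ has dimension one less than $\dim|L|$, because the relative scaling of the two sections is killed when passing from sections to zero schemes; the formula ``$\dim|L|-\delta$ points'' is tailored to connected surfaces.) The multiplicativity one actually needs lives not at the level of naive enumerative counts on disconnected surfaces, but at the level of the tautological integrals over $\mathrm{Hilb}^n(S)$ that compute the universal polynomials, where the decomposition $\mathrm{Hilb}^n(S_1\sqcup S_2) = \bigsqcup_{a+b=n}\mathrm{Hilb}^a(S_1)\times\mathrm{Hilb}^b(S_2)$ supplies the product structure cleanly; this is also how the algebraic-cobordism proofs of the universal polynomial theorem proceed. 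Without replacing the disjoint-union curve count by that formulation, the multiplicativity step is not established and the proof is incomplete.
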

This means that the coefficient of $t^\delta$ in the formal logarithm of $\sum_{\delta \ge 0} T_\delta(x,y,z,w) t^\delta$ is a linear function in $x, y, z$ and $w,$ which is potentially simpler than the expression for $T_\delta(x,y,z,w).$

Therefore, it is natural for us to consider the generating function for classical Severi degrees:
\begin{equation}\label{equ:defncN} \cN(d) := 1 + \sum_{\delta \ge 1} N^{d, \delta} t^{\delta},\end{equation}
and its formal logarithm 
\begin{equation}\label{equ:defncQ} \cQ(d) := \log(\cN(d))  = \sum_{\delta \ge 1} Q^{d, \delta} t^{\delta}. \end{equation}
%
%
%
It is straightforward to show that $Q^{d, \delta}$ is also a polynomial in $d$ for sufficiently large $d.$ We denote this polynomial by $Q_\delta(d).$ It is clear that
\[ \log\left(\sum_{\delta \ge 0} N_\delta(d) t^\delta \right) = \sum_{\delta \ge 1} Q_\delta(d) t^\delta .\]

Although the degree of $N_\delta(d)$ was shown to be $2 \delta$, the polynomial $Q_\delta(d)$, which is an alternating sum of $N_\delta(d)$'s, turns out to be quadratic, following from \eqref{equ:T2N} and Proposition \ref{prop:logT}. (See Proposition 3.1 in \cite{qviller2012}.)  
\begin{cor}
  \label{cor:quadratic}
	For any fixed $\delta,$ $Q^{d, \delta}$ is a quadratic polynomial in $d$ for sufficiently large $d.$
\end{cor}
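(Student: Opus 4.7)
The plan is to derive this as a formal consequence of equation \eqref{equ:T2N} together with Proposition \ref{prop:logT}, via a coefficient extraction in the variable $t$.

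First I would fix $\delta$ and invoke the polynomiality result of Fomin--Mikhalkin: choose $d$ large enough that $N^{d, \delta'} = N_{\delta'}(d)$ holds simultaneously for every $\delta' \in \{1, 2, \ldots, \delta\}$ (e.g.\ take $d \ge \max_{\delta' \le \delta} d^*(\delta')$, or simply use Kleiman--Shende's bound $d \ge \lceil \delta/2 \rceil + 1$). For such $d$, the truncation of $\cN(d)$ modulo $t^{\delta+1}$ agrees with the truncation of $\sum_{\delta' \ge 0} N_{\delta'}(d)\, t^{\delta'}$, and consequently the same holds for their formal logarithms. Thus
\[ Q^{d, \delta} \;=\; [t^\delta] \log\!\left( \sum_{\delta' \ge 0} N_{\delta'}(d)\, t^{\delta'} \right) \;=\; Q_\delta(d). \]
So it suffices to show that $Q_\delta(d)$ is a quadratic polynomial in $d$.

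Next I would substitute the $\CP^2$ topological numbers into Proposition \ref{prop:logT}. By \eqref{equ:T2N}, $N_\delta(d) = T_\delta(d^2, -3d, 9, 3)$, so specializing $(x,y,z,w) = (d^2, -3d, 9, 3)$ in Proposition \ref{prop:logT} yields the identity of formal power series in $t$:
\[ \sum_{\delta \ge 1} Q_\delta(d)\, t^{\delta} \;=\; \log\!\left( \sum_{\delta \ge 0} T_\delta(d^2,-3d,9,3)\, t^{\delta} \right) \;=\; d^{2} A_1(t) \;-\; 3d\, A_2(t) \;+\; 9\, A_3(t) \;+\; 3\, A_4(t). \]
Extracting the coefficient of $t^\delta$ on both sides gives
\[ Q_\delta(d) \;=\; \bigl([t^\delta] A_1(t)\bigr) d^{2} \;-\; 3\bigl([t^\delta] A_2(t)\bigr) d \;+\; 9\,[t^\delta] A_3(t) \;+\; 3\,[t^\delta] A_4(t), \]
which is manifestly a polynomial in $d$ of degree at most $2$. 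Combined with the previous paragraph, this proves that $Q^{d, \delta}$ agrees with this quadratic polynomial for $d$ sufficiently large.

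There is really no hard step here: the corollary is a formal bookkeeping deduction from two cited results, and the only point requiring minor care is ensuring the polynomiality threshold is taken uniformly over $\delta' \le \delta$ before one can pass from $\cN(d)$ to $\sum N_{\delta'}(d)\, t^{\delta'}$ when taking logarithms. (One could further record the exact coefficients $[t^\delta]A_i(t)$ from known low-degree data, but this is not needed for the qualitative statement.)
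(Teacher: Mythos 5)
Your argument is correct, and it coincides with the short justification the paper itself gives right before stating Corollary~\ref{cor:quadratic}: the paper notes that quadraticity ``follows from \eqref{equ:T2N} and Proposition~\ref{prop:logT}'' (citing Proposition~3.1 of Qviller). You have spelled out the one point of care the paper leaves implicit, namely that the polynomiality threshold must be taken uniformly over all $\delta' \le \delta$ before one can identify $\log \cN(d)$ with $\log\bigl(\sum_{\delta'} N_{\delta'}(d) t^{\delta'}\bigr)$ modulo $t^{\delta+1}$; that is the right thing to check and your handling of it is sound.

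That said, this route is explicitly labeled in the paper as the \emph{known} proof, and the paper then announces that it ``will provide another proof of Corollary~\ref{cor:quadratic}.'' The paper's own proof is the combinatorial one carried out in Section~\ref{sec:computeQ}: it combines Corollary~\ref{cor:shifttemp} (Fomin--Mikhalkin's formula plus Lemma~\ref{lem:vanish0}) with Theorem~\ref{thm:main0} to write $Q^{d,\delta} = \sum_{\Gamma} Q^{d,\Gamma}$ over templates $\Gamma$ of cogenus $\delta$, and then proves in Corollary~\ref{cor:quadGamma} that each $Q^{d,\Gamma}$ is a quadratic polynomial in $d$ for $d \ge k_{\min}(\Gamma)+l(\Gamma)-\epsilon_1(\Gamma)-1$, by summing the linear-in-$k$ values $\Phi_{\v(k,l(\Gamma))}(\Gamma)$. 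The trade-off is instructive. Your proof is a few lines of formal bookkeeping, but it rests on heavy algebro-geometric input: \eqref{equ:T2N} requires the existence of G\"ottsche's universal polynomials $T_\delta$, i.e.\ the Tzeng and Kool--Shende--Thomas theorems. The paper's proof is longer but purely combinatorial (tropical/floor-decomposition input from Fomin--Mikhalkin plus the new linearity theorem), is independent of the universal-polynomial machinery, gives an explicit threshold $d^\ast \le \delta$ (Remark~\ref{rem:threshold}), and produces closed-form combinatorial expressions for the coefficients of $Q_\delta(d)$, hence for $A_1(t)$ and $A_2(t)$ (Section~\ref{sec:determine}). So your derivation establishes the qualitative statement cleanly, but it does not reproduce the quantitative by-products that are the point of the paper's alternative proof.
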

In this paper, we will provide another proof of Corollary \ref{cor:quadratic} as well as a combinatorial way of computing the power series $A_1(t)$ and $A_2(t)$ by proving a certain function associated to {\it long-edge graphs} is linear. 
We give a brief introduction to the objects in our results below, and will fill in the details in Section \ref{sec:vialongedge}.

\subsection{Long-edge graphs and the main result}
Brugall\'e and Mikhalkin \cite{BruMik2007, BruMik2009} introduced ``(marked) labeled floor diagrams'' and gave an enumerative formula for the Severi degree $N^{d, \delta}$ in terms of these diagrams. Fomin and Mikhalkin \cite{FomMik2010} reformulated Brugall\'e and Mikhalkin's results by introducing a ``template decomposition'' of labeled floor diagrams. They first constructed a bijection between labeled floor diagrams and {\it long-edge graphs} and then gave a natural decomposition of long-edge graphs into ``templates'' (Fomin and Mikhalkin did not name the graphs they use; the terminology ``long-edge graphs'' was first introduced in \cite{BloColKen2013}.)
\begin{defn}\label{defn:long-edge}
  A {\it long-edge} graph $G$ is a graph $(V,E)$ with a weight function $\rho$ satisfying the following conditions:
  \begin{alist}
	\itm The vertex set $V = \N =\{0, 1, 2, \dots\},$ and the edge set $E$ is finite.
	\itm Multiple edges are allowed, but loops are not.
	\itm The weight function $\rho: E \to \P$ assigns a positive integer to each edge.
	\itm There are no {\it short edge}, i.e., there's no edge connecting $i$ and $i+1$ with weight $1.$
\end{alist}
\end{defn}

We often draw the vertices $0, 1, 2, \dots$ of long-edge graphs from left to right and label each edge with its weight. 
Since all but finitely many vertices do not have incident edges, we often omit most of irrelevant vertices when we draw long-edge graphs.
See Figure \ref{fig:exlongedge} for three examples of long-edge graphs.

\begin{figure}
		\begin{picture}(440,60)(0,-15)
\setlength{\unitlength}{4.0pt}\thicklines
\multiput(0,0)(10,0){3}{\circle*{2}}
\put(1,0){\line(1,0){8}}
\qbezier(0.8,0.6)(10,10)(19.2,0.6)
\put(5,-2){\makebox(0,0){$\scriptstyle 2$}}
\put(10,7){\makebox(0,0){$\scriptstyle 1$}}
\put(0,-3){\makebox(0,0){$0$}}
\put(10,-3){\makebox(0,0){$1$}}
\put(20,-3){\makebox(0,0){$2$}}
\put(2,10){\makebox(0,0){$G_1$}}

\multiput(40,0)(10,0){3}{\circle*{2}}
\put(41,0){\line(1,0){8}}
\qbezier(40.8,0.6)(50,10)(59.2,0.6)
\put(45,-2){\makebox(0,0){$\scriptstyle 2$}}
\put(50,7){\makebox(0,0){$\scriptstyle 1$}}
\put(40,-3){\makebox(0,0){$3$}}
\put(50,-3){\makebox(0,0){$4$}}
\put(60,-3){\makebox(0,0){$5$}}
\put(42,10){\makebox(0,0){$G_2$}}

\multiput(80,0)(10,0){4}{\circle*{2}}
\put(81,0){\line(1,0){8}}
\qbezier(80.8,0.6)(90,10)(99.2,0.6)
\put(101,0){\line(1,0){8}}
\put(85,-2){\makebox(0,0){$\scriptstyle 2$}}
\put(90,7){\makebox(0,0){$\scriptstyle 1$}}
\put(105,-2){\makebox(0,0){$\scriptstyle 2$}}
\put(80,-3){\makebox(0,0){$3$}}
\put(90,-3){\makebox(0,0){$4$}}
\put(100,-3){\makebox(0,0){$5$}}
\put(110,-3){\makebox(0,0){$6$}}
\put(82,10){\makebox(0,0){$G_3$}}
\end{picture}
\caption{Examples of long-edge graphs}
\label{fig:exlongedge}
\end{figure}
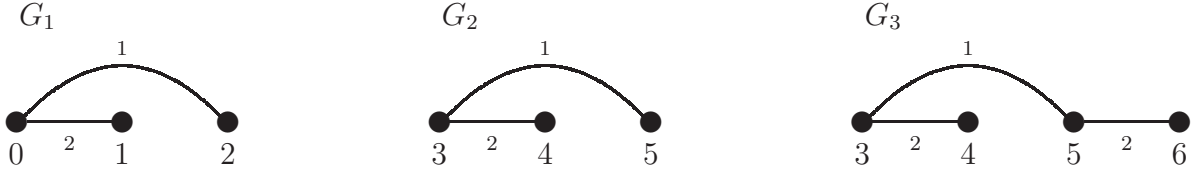

Fomin and Mikhalkin associate to each long-edge graph a statistic $\nu(G)$, and then give an enumerative formula for computing the Severi degree $N^{d, \delta}$ in terms of long-edge graphs using this statistic. We extend their definition and define two closely related statistics $P_\bbeta(G)$ and $P_\bbeta^s(G)$ for any given finite sequence $\bbeta,$ where $P_{(0,1,\dots, d)}^s(G)$ is the same as $\nu(G)$ defined by Fomin and Mikhalkin. We consider logarithmic versions of $P_\bbeta(G)$ and $P_\bbeta^s(G)$. For any long-edge graph $G$,  we define
\begin{equation}\label{equ:qG0}
\Phi_\bbeta(G) := \sum_{i \ge 1} \frac{(-1)^{i+1}}{i} \sum_{(G_1, \dots, G_i)}  \left(\prod_{j=1}^i P_\bbeta(G_j) \right),
\end{equation}
\begin{equation}\label{equ:qdG0}
\Phi_\bbeta^s(G) := \sum_{i \ge 1} \frac{(-1)^{i+1}}{i} \sum_{(G_1, \dots, G_i)}  \left(\prod_{j=1}^i P_\bbeta^s(G_j) \right),
\end{equation}
where both summations are over all the partitions of $G.$

Below is the main result of this paper.
\begin{thm}\label{thm:main0intro}
	Suppose $G$ is a long-edge graph. Then $\Phi_\bbeta\left( G \right)$ is a linear function in $\bbeta$ for sufficiently large $\bbeta.$
\end{thm}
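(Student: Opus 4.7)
The plan is to exploit the $\log = \sum$-over-irreducibles philosophy that is built into the definition \eqref{equ:qG0}. First I would recast $P_\bbeta(G)$ as a weighted enumeration over the $(\tau,\bn)$-words foreshadowed in the abstract, where $\tau$ records the combinatorial type of $G$ (the multiset of edges with their endpoints and weights, up to horizontal translation) and $\bn$ is the finite truncation of $\bbeta$ supported near the vertices occupied by $G$. The partitions $(G_1,\dots,G_i)$ that appear in \eqref{equ:qG0} then correspond to factoring such a word into an ordered concatenation of sub-words whose types partition $\tau$, and this compatibility with concatenation is exactly the multiplicativity the exponential formula requires.

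Next I would apply the standard exp-log identity for decomposable structures. Writing
\[
F_\bbeta(G) \;:=\; \sum_{(G_1,\dots,G_i)} \prod_{j=1}^{i} P_\bbeta(G_j),
\]
the identity $\log(1+F_\bbeta) = \sum_{\text{irreducible}} (\text{weight})$ should identify $\Phi_\bbeta(G)$ with the sum of weights of \emph{irreducible} $(\tau,\bn)$-words, where irreducibility means the word admits no non-trivial splitting into a concatenation of two sub-words each of which is itself of the same kind. This is the crucial reformulation: the signed, partition-summed quantity $\Phi_\bbeta(G)$ becomes an unsigned count, which should be much easier to analyse.

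I would then introduce a height function that measures, at each integer position $i$ along $\N$, the net amount of edge-structure ``crossing'' $i$, i.e., the contribution of edges of the graph that begin weakly to the left of $i$ and end strictly to the right. The key observation should be that a word is reducible precisely at the positions where the height drops to zero in a compatible way, so an irreducible $(\tau,\bn)$-word has strictly positive height at every internal position. Combined with the no-short-edge condition (d) in Definition \ref{defn:long-edge}, the height bound should force the horizontal support of any irreducible word to lie in a window of width depending only on the type $\tau$, not on $\bbeta$. Consequently, the \emph{shape} of an irreducible word is controlled by $\tau$ alone, while the only degree of freedom from $\bbeta$ is the position where the window sits.

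To conclude linearity, I would check that once $\bbeta$ is long enough to accommodate the shift, translating an irreducible word one step to the right gives a bijection whose weight changes by a factor that depends linearly on the corresponding entries of $\bbeta$. Summing over all admissible starting positions then expresses $\Phi_\bbeta(G)$ as a finite linear combination (indexed by irreducible shapes) of sums of monomials in the entries of $\bbeta$, each of which is linear for $\bbeta$ sufficiently large. The principal obstacle I anticipate is not the formal exp-log dictionary but the height bound for irreducibility: one must design a height function simultaneously additive under concatenation, sensitive enough to the long-edge condition to rule out hidden splittings, and tight enough that the width of the support window is genuinely a function of $\tau$ alone. Getting this combinatorial core right is where the real work lies; once it is in place, the linear-in-$\bbeta$ conclusion should follow by a direct counting argument.
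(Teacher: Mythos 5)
Your high-level instincts — exp-log, irreducibility, a height function, translation — all appear in the paper, but the way you assemble them has two genuine gaps, and one conflation that would derail the argument.

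First, you are missing the \emph{reciprocity} step, and without it the concatenation structure you invoke simply is not available on the side where you need it. In the paper the quantity $P_\bbeta(G_\tau(\bn))$ is a polynomial $p_\tau(\bn,\bbeta)$ in $\bbeta$ (Lemma \ref{lem:polyP}), and it is \emph{not} directly the count of any family of words. What \emph{is} a word count is $|S_\tau(\bn,\t)|$ for $\t\in\N^\ell$, and the two are linked by $p_\tau(\bn,\bbeta)=(-1)^{\bn}s_\tau(\bn,-\bbeta-\1)$. The entire height/irreducibility/concatenation machinery lives on the $S_\tau(\bn,\t)$ side, where the parameter $\t$ is small and nonnegative; after everything is proved there, the result is transported back by this reciprocity and by Zariski density of $\N^\ell$ in $\C^\ell$. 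Also note your dictionary is reversed: $\bn$ records multiplicities of edge types (so $G=G_\tau(\bn)$); the parameter tied to $\bbeta$ is $\t$, not $\bn$.

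Second, "$\log = $ sum over irreducibles" does not by itself yield linearity in $\bbeta$, and this is exactly where your argument has no content. The point of the decomposition theorem (Theorem \ref{thm:decomp}, via Lemmas \ref{lem:decompQ} and \ref{lem:decompirr}) is to establish the much stronger factorization
\[
\cS_{\tau,\t}(\x)=\bigl(F^{(1)}_\tau(\x)\bigr)^{t_1}\cdots\bigl(F^{(\ell)}_\tau(\x)\bigr)^{t_\ell}\,H_\tau(\x),
\]
where $F^{(j)}_\tau=\cS^{\irr}_{\tau,\be_j}$ and $H_\tau=\cS_{\tau,\0}$. It is the exponents $t_j$ in this formula — coming from the iterated, unique splitting of a height-$(-\t)$ word into $t_1+\cdots+t_\ell$ irreducible pieces of elementary heights — that make $\log\cS_{\tau,\t}$, and hence $\varphi_\tau(\bn,\cdot)$, \emph{linear} in $\t$. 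Knowing only that $\Phi_\bbeta(G)$ "counts irreducibles" could just as well produce any polynomial degree.

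Third, your closing paragraph conflates the theorem to be proved with a downstream corollary. "Translating an irreducible word one step to the right" and "summing over all admissible starting positions" describes $\sum_k\Phi^s_{\v(d)}(\Gamma_{(k)})$ in Corollary \ref{cor:shifttemp} — that sum is where the $\Gamma_{(k)}$-shifts appear and where one gets \emph{quadraticity in $d$} of $Q^{d,\delta}$ (Corollary \ref{cor:quadratic}), \emph{not} linearity in $\bbeta$ of $\Phi_\bbeta(G)$ for a fixed $G$. In the linearity proof there is no sum over translates of $G$. Relatedly, the no-short-edge condition plays no role in the proof of the linearity theorem (cf.\ Remark \ref{rem:longedge}); the result is proved for general $\tau$-graphs including short edges, so your claim that condition (d) of Definition \ref{defn:long-edge} is needed to bound a "window" is a red herring. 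Finally, the paper's height is a vector $h(\w)=(h_1,\dots,h_\ell)$ additive under concatenation, and irreducibility is defined relative to a prescribed height $\bh$ (no proper initial subword of height $\bh$), not the "strictly positive at every internal position" condition you propose, which is the scalar Lukasiewicz picture and only matches the special case $\maxv(\tau)=1$ treated as an example in Section \ref{sec:examples}.
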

 
The proof of Theorem \ref{thm:main0intro} is purely combinatorial and provides combinatorial objects to compute the coefficients of the linear function described in the theorem. We note that in 2012, the special case of Theorem \ref{thm:main0intro} when $\bbeta=(0,1,2,\dots, d)$ was conjectured by Block, Colley and Kennedy. They have since given in \cite{BloColKen2013} an independent proof of their conjecture. In fact, the original motivation of this paper was to prove their conjecture. However, the author noticed that the combinatorial approaches presented in this paper can be easily extended to prove our main result which has a lot more applications. 

 Below we discuss briefly applications of our main result.

\subsection{Applications of the main result}

The first application of our theorem is that, as in  \cite{BloColKen2013}, we can provide another proof of Corollary \ref{cor:quadratic} by applying it to the logarithmic version of Fomin-Mikhalkin's formula for classical Severi degrees. However, our techniques go further providing a new method for computing $Q^{d, \delta}$ and $N^{d,\delta}$. We are also able to recover the threshold bound $d^* \le \delta$ given by Block (see Remark \ref{rem:threshold}).  
Moreover, in the process of proving Corollary \ref{cor:quadratic}, we discover combinatorial formulas involving the coefficients of the linear function described in Theorem \ref{thm:main0intro} for computing the power series $A_1(t)$ and $A_2(t)$ of Proposition \ref{prop:logT}.

More importantly, the enumerative formula using labeled floor diagrams for Severi degrees introduced by Brugall\'e and Mikhalkin in \cite{BruMik2007, BruMik2009} does not only work for complex projective planes $\CP^2$, it also works for a more general family of (not necessarily smooth) toric surfaces coming from ``h-transverse'' polygons. In joint work with Osserman \cite{toricsurface}, we apply Theorem \ref{thm:main0intro} to this family of toric surfaces and obtain universal polynomial results having close connection to the G\"ottsche-Yau-Zaslow formula. 
Furthermore, results in \cite{toricsurface} provide a simpler combinatorial formula for computing $A_2(t)$ as well as a combinatorial formula for computing $A_3(t)-A_4(t)$ (where $A_i(t)$'s are the power series described in Proposition \ref{prop:logT}).
There are four power series involved in the G\"ottsche-Yau-Zaslow formula. Although two of the four power series, which often referred to as $B_1(q)$ and $B_2(q)$, are not explicitly identified, their terms can be computed by using the recursive formula of \cite{CapHar1998} for the classical Severi degrees $N^{d, \delta}$ and applying interpolation methods as soon as the threshold $d^*$ of the polynomiality of $N^{d,\delta}$ is known. Assuming the threshold bound $d^* \le \lceil \delta/2\rceil+1,$ G\"ottsche \cite[Remark 2.5]{gottsche1998} has calculated $B_1(q)$ and $B_2(q)$ up to degree $28$. 
Since $A_2(t)$ and $A_3(t)-A_4(t)$ determines $B_2(q)$ and $B_1(q)$ respectively, the paper \cite{toricsurface} provides combinatorial methods for computing $B_1(q)$ and $B_2(q)$ directly without using interpolation.

We won't discuss in this paper details of results and proofs in \cite{toricsurface}, which has a different focus, and are much more complicated than applications obtained by applying Theorem \ref{thm:main0intro} to the classical Severi degrees presented in this paper. However, our discussion on classical Severi degrees gives a demonstration of some ideas in \cite{toricsurface}, and also gives results that are not covered in \cite{toricsurface} such as a new method for computing $Q^{d, \delta}.$


This paper is organized as follows. 
\begin{ilist}
  \itm Section \ref{sec:vialongedge} is an {\it extended introduction}, in which we give detailed description of relevant functions for long-edge graphs and restate Theorem \ref{thm:main0intro} with more details (Theorem \ref{thm:main0}), followed by Fomin-Mikhalkin's formula for Severi degree and an analysis of its logarithmic form. 
	\itm Section \ref{sec:computeQ} and \ref{sec:determine} discusses the {\it applications} of our main result to the classical Severi degrees. In Section \ref{sec:computeQ}, we prove Corollary \ref{cor:quadratic} using Theorem \ref{thm:main0intro}, and give examples of how we can use the linear function described in Theorem \ref{thm:main0intro} to compute the quadratic polynomial $Q_\delta(d).$ 
In Section \ref{sec:determine}, we provide combinatorial formulas for $A_1(t)$ and $A_2(t)$ of Proposition \ref{prop:logT} by analyzing the formula for computing $Q_\delta(d)$ further.

\itm The rest of the paper mainly devotes to {\it the proof of Theorem \ref{thm:main0}}.

The first part of the proof consists of two reduction steps appearing in Sections \ref{sec:reformulate} and \ref{sec:polyreci}. In section \ref{sec:reformulate}, we introduce $\tau$-graphs, a generalization of long-edge graphs, and state our theorem in terms of $\tau$-graphs. We then reduce the problem to proving a theorem on the generating function on $\tau$-graphs (Theorem \ref{thm:main1}).
In Section \ref{sec:polyreci}, we introduce another combinatorial object: $(\tau, \bn)$-words, a special family of which, denoted by $S_\tau(\bn, \t)$, has a reciprocity connection to $\tau$-graphs. Using this connection, we reduce our problem (of proving Theorem \ref{thm:main1}) to proving a result on the generating function of $S_\tau(\bn, \t)$ (Theorem \ref{thm:tauwords}).

We focus on $(\tau,\bn)$-words in Sections \ref{sec:decomp} and \ref{sec:finalproof} and give a proof for Theorem \ref{thm:tauwords}.
In Section \ref{sec:decomp}, we introduce a height function and a concept of irreducibility for $(\tau, \bn)$-words. Using the height function, we describe an algorithm of finding the unique irreducible initial subword of $(\tau,\bn)$-words of a prescribed height, which provides the main ingredient for proving a decomposition result on words in $S_\tau(\bn, \t)$ and finishing the proof of Theorem \ref{thm:main0} in Section \ref{sec:finalproof}.


\itm In Section \ref{sec:examples}, we give {\it examples} of how to compute the linear function described in Theorems \ref{thm:main0intro} and \ref{thm:main0} through $(\tau, \bn)$-words. In particular, we provide an explicit formula (in Lemma \ref{lem:m=1}) for the linear function when the long-edge graph only has one type of edges. 
\end{ilist}

\subsection*{A note to readers} Part (ii) and part (iii) are completely independent from one another. The readers should be able to understand each part without reading the other.

\subsection*{Acknowledgement} I would like to thank Brian Osserman for suggesting this problem to me and Florian Block for helpful conversations.

\section{Severi degrees via long-edge graphs}\label{sec:vialongedge}

In this section, we state a more detailed version of Theorem \ref{thm:main0intro} (Theorem \ref{thm:main0}) and an important result on the function $\Phi_\bbeta^s$ (Lemma \ref{lem:vanish0}) preceded by all relevant definitions.
We then state the Fomin-Mikhalkin's formula for computing Severi degree $N^{d, \delta}$ using long-edge graphs. Taking the logarithm of the generating function of their formula, we give a formula for $Q^{d, \delta}$ involving templates.

We start with some basic combinatorial definitions and notation that will be used in the rest of the paper. $\N = \{0, 1, 2,\dots,\}$ is the set of nonnegative integers and $\P=\{1, 2, 3,\dots,\}$ is the set of positive integers. Given a positive integer $\ell,$ we denote by $[\ell]$ the set $\{1, 2,\dots, \ell\}.$

\begin{defn}
 Given a long-edge graph $G = (V, E)$ equipped with weight function $\rho,$ we define the {\it multiplicity of $G$} to be 
\[ \mu(G) = \prod_{e \in E} (\rho(e))^2,\]
and the {\it cogenus of $G$} to be
\[ \delta(G) = \sum_{e \in E} \left( l(e) \rho(e) - 1 \right),\]
where for any $e = \{i,j\} \in E$ with $i < j,$ we define $l(e) = j-i.$
Note that any non-empty long-edge graph has positive cogenus.

We define $\minv(G)$ (respectively, $\maxv(G)$) to be the smallest (respectively, largest) vertex of $G$ that has nonzero-degree. We then define the {\it length} of $G,$ denoted by $l(G),$ to be $\maxv(G) -\minv(G).$

For any long-edge graph $G$ and any $k \in \N,$ we denote by $G_{(k)}$ the graph obtained by shifting all edges of $G$ to the right $k$ units, i.e., a weighted edge $\{i,j\}$ in $G$ becomes a weighted edge $\{i+k, j+k\}$ in $G_{(k)}.$
\end{defn}

\begin{ex}
  Consider $G_1$ and $G_2$ in Figure \ref{fig:exlongedge}. One notices that graph $G_2$ is obtained by shifting graph $G_1.$ More precisely, $G_2 = (G_1)_{(3)}.$
  It is clear that
  \[ \mu(G_1) = \mu(G_2) = 2^2 \cdot 1^1 = 4, \quad \delta(G_1)=\delta(G_2) = (2 \cdot 2 - 1) + (1 \cdot 2 - 1) = 2.\]
Also,
\[ \minv(G_1) = 0, \quad \maxv(G_1) = 2, \qquad \minv(G_2) = 3, \quad \maxv(G_2) = 5.\]
Therefore, 
\[ l(G_1) = l(G_2) = 2.\]
\end{ex}

\begin{defn}
	Given a long-edge graph $G,$ we say a tuple $(G_1, \dots, G_i)$ of (non-empty) long-edge graphs is a {\it partition} of $G$ if the disjoint union of the (weighted) edge sets of $G_1, \dots, G_i$ is the (weighted) edge set of $G.$
\end{defn}
By the definitions of multiplicity and cogenus, one checks that for any partition $(G_1, \dots, G_i)$ of $G$, we have
\begin{equation}\label{equ:partofG}
	\mu(G) = \prod_{j=1}^i \mu(G_j) \quad \text{and} \quad \delta(G) = \sum_{j=1}^i \delta(G_j).
\end{equation}

An important family of long-edge graphs is {\it templates}.
\begin{defn}
	A long-edge graph $\Gamma$ is a {\it template} if for any vertex $i:$ $1 \le i \le \maxv(\Gamma)-1$, there exists at least one edge $\{j,k\}$ satisfying $j < i < k.$

We say a long-edge graph $G$ is a {\it shifted template}  if $G$ can be obtained by shifting a template; that is, if $G = \Gamma_{(k)}$ for some template $\Gamma$ and some nonnegative integer $k.$
\end{defn}

It is clear from the above definition that for any template $\Gamma$ and any $k \in \N,$ we have
\begin{equation}\label{equ:shifttempdata}
	\minv\left( \Gamma_{(k)} \right) = k \text{ and } \maxv\left( \Gamma_{(k)} \right) = k + l(\Gamma).
\end{equation}

\begin{ex}
	Consider the three graphs in Figure \ref{fig:exlongedge}. The graph $G_1$ is a template (and also a shifted template), the graph $G_2$ is a shifted template, and the graph $G_3$ is not a shifted template. 
\end{ex}

\begin{defn}
	Let $G$ be a long-edge graph with associated weight function $\rho$. We define
	\[ \lambda_j(G) = \text{ sum of the weights of edges $\{i, k\}$ with $i < j \le k$}, \quad \forall j.\]
	Let $\bbeta= (\beta_1, \beta_2, \dots, \beta_{M+1}) \in \N^{M+1}$ (where $M \ge 0$).
	We say $G$ is {\it $\bbeta$-allowable} if $\maxv(G) \le M+1$ and $\beta_j \ge \lambda_j(G)$ for each $j.$

	A long-edge graph $G$ is {\it strictly $\bbeta$-allowable} if it satisfies the following conditions:
	\begin{alist}
	\itm $G$ is $\bbeta$-allowable.
	\itm Any edge that is incident to the vertex $0$ has weight $1.$ 
	\itm Any edge that is incident to the vertex $M+1$ has weight $1.$ 
	\end{alist}



\end{defn}

\begin{defn}
	Suppose $\bbeta= (\beta_1, \beta_2, \dots, \beta_{M+1}) \in \N^{M+1}$ and $G$ is $\bbeta$-allowable. We create a new graph $\ext_\bbeta(G)$ by adding $\beta_j-\lambda_j(G)$ (unweighted) edges connecting vertices $j-1$ and $j$ for each $1 \le j \le M+1.$ 

An {\it $\bbeta$-extended ordering} of $G$ is a total ordering of the vertices and edges of $\ext_\bbeta(G)$ satisfying the following conditions:
\begin{alist}
\itm The ordering extends the natural ordering of the vertices $0, 1, 2, \cdots$ of $\ext_\bbeta(G)$.
\itm For any edge $e =\{a, b\}$,  
its position in the total ordering has to be between $a$ and $b.$
\end{alist}

We consider two $\bbeta$-extended orderings $o$ and $o'$ to be equivalent if there is an automorphism $\sigma$ on the edges of $\ext_\bbeta(G)$ such that
\begin{alist}
\itm If $\sigma(e) = e',$ then $e$ and $e'$ have the same vertices, and either have the same weights or are both unweighted.
\itm When applying $\sigma$ on the ordering $o$, one obtains the ordering $o'.$
\end{alist}
For any long-edge graph $G$, we define
\[ P_\bbeta(G) = \text{ the number of $\bbeta$-extended orderings (up to equivalence) of $G$},\]
where by convention $P_\bbeta(G) = 0$ if $G$ is not $\bbeta$-allowable, and then define
\[ P_{\bbeta}^s(G) = \begin{cases}P_{\bbeta}(G) & \text{if $G$ is strictly $\bbeta$-allowable;} \\
	0 & \text{otherwise.}
\end{cases}\]
\end{defn}

\begin{rem}\label{rem:emptygraph}
  Suppose $G$ is the empty long-edge graph, i.e., graph without any edges. Then for any $\bbeta \in \N^{M+1},$ the graph $G$ is (strictly) $\bbeta$-allowable and $P_\bbeta(G) = P_\bbeta^s(G) = 1.$
\end{rem}

\begin{ex}Let $\bbeta = (\beta_1, \beta_2, \dots, \beta_{M+1}) \in \N^{M+1}.$ 

  Consider $G_1$ in Figure \ref{fig:exlongedge}. We have
  \[ \lambda_1(G_1) = 3, \quad \lambda_2(G_1) = 1, \quad \text{ and for any $j \ge 3$}, \quad \lambda_j(G_1) = 0.\]
Hence, $G_1$ is $\bbeta$-allowable if and only if $M \ge 1$, $\beta_1 \ge 3$ and $\beta_2 \ge 1,$ but is never strictly $\bbeta$-allowable.

  Consider $G_2$ in Figure \ref{fig:exlongedge}. We have
  \[ \lambda_4(G_2) = 3, \quad \lambda_5(G_2) = 1, \quad \text{ and for any $j \neq 4, 5$}, \quad \lambda_j(G_2) = 0.\]
  One can check that $G_2$ is $\bbeta$-allowable and strictly $\bbeta$-allowable if and only if $M \ge 4,$ $\beta_4 \ge 3$ and $\beta_5 \ge 1.$

  Finally, one checks that $G_3$ in Figure \ref{fig:exlongedge} is $\bbeta$-allowable if and only if $M \ge 5,$ $\beta_4 \ge 3$, $\beta_5 \ge 1$ and $\beta_6 \ge 2;$
  but is strictly $\bbeta$-allowable if and only if $M \ge 6,$ $\beta_4 \ge 3$, $\beta_5 \ge 1$ and $\beta_6 \ge 2.$
\end{ex}

One sees from the above example that a long-edge graph is simultaneously $\bbeta$-allowable and strictly $\bbeta$-allowable most of the time; these two allowabilities only differ on some special ``boundary'' conditions. Therefore, we can focus more on the statistic $P_\bbeta(G)$, although the number $P_\bbeta^s(G)$ will be used to compute the Severi degrees.

\begin{ex}[Examples of $P_\bbeta$] \label{ex:funcP}
Suppose $\bbeta = (\beta_1, \dots, \beta_{M+1}) \in \N^{M+1}$. 
  
  Let $G$ be the long-edge graph with only two edges of weight $2$ connecting vertices $0$ and $1$. 
  Then $G$ is $\bbeta$-allowable if and only if $\beta_1 \ge 4.$
  Thus, $P_\bbeta(G) = 0$ for $\beta_1 < 4.$ For $\beta_1 \ge 4,$ in order to create $\ext_{\bbeta}(G),$ we need to add $\beta_1-4$ unweighted edges connecting vertices $0$ and $1.$ The number of $\bbeta$-extended orderings of $G$ only depends on how we order these $\beta_1-4$ new edges and the original two edges in $G.$ It is easy to see that $P_\bbeta(G) = \binom{\beta_1-4+2}{2} = \binom{\beta_1-2}{2}$ for $\beta_1 \ge 4.$

Let $G'$ be the long-edge graph with only one edge of weight $2$ connecting vertices $0$ and $1$. By a similar discussion, we get that $P_\bbeta( G')$ is $\beta_1-1$ for $\beta_1 \ge 2,$ and is $0$ for $\beta_1 < 2.$
\end{ex}




Although a graph $G$ is $\bbeta$-allowable if $\bbeta \ge \lambda(G),$ it turns out that one only need a weaker version of the condition $\bbeta \ge \lambda(G)$ for our main result. 

\begin{defn}\label{defn:redlambda}
	Let $G$ be a long-edge graph with associated weight function $\rho$. We define
	\[ \olambda_j(G) = \lambda_j(G) - \# (\text{edges in $G$ connecting vertices $j-1$ and $j$}) , \quad \forall j.\]

\end{defn}

Recall definitions of $\Phi_\bbeta(G)$ and $\Phi_\beta^s(G)$ given in the introduction:
\begin{equation}\label{equ:qG}
\Phi_\bbeta(G) := \sum_{i \ge 1} \frac{(-1)^{i+1}}{i} \sum_{(G_1, \dots, G_i)}  \left(\prod_{j=1}^i P_\bbeta(G_j) \right),
\end{equation}
\begin{equation}\label{equ:qdG}
\Phi_\bbeta^s(G) := \sum_{i \ge 1} \frac{(-1)^{i+1}}{i} \sum_{(G_1, \dots, G_i)}  \left(\prod_{j=1}^i P_\bbeta^s(G_j) \right),
\end{equation}
where both summations are over all the partitions of $G$.


We now state a more specific version of our main result Theorem \ref{thm:main0intro}. 

\begin{thm}\label{thm:main0}
	Suppose $G$ is a long-edge graph satisfying $\maxv(G) \le M+1$. Then for any $\bbeta = (\beta_1, \dots, \beta_{M+1})$ satisfying $\beta_j \ge \olambda_j(G)$ for all $j,$ the values $\Phi_\bbeta \left( G \right)$ are given by a linear multivariate function in $\bbeta$. \end{thm}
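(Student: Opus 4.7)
The plan is to unmask the linearity of $\Phi_\bbeta(G)$ as a cancellation phenomenon by translating everything into a word-enumeration problem whose ``connected'' count is manifestly linear. Note first that each individual factor $P_\bbeta(G_j)$ entering \eqref{equ:qG} is a polynomial in $\bbeta$ of potentially large degree---already $P_\bbeta(G) = \binom{\beta_1-2}{2}$ in Example~\ref{ex:funcP}---so a tremendous amount of cancellation is taking place in the alternating-sign sum defining $\Phi_\bbeta(G)$. The natural vehicle for exhibiting such a cancellation is the $\exp/\log$ (moment/cumulant) bijection between signed ``decomposable'' configurations and unsigned ``irreducible'' configurations, and this is what I would set out to construct.

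First, I would translate an extended ordering of $\ext_\bbeta(G)$ into a word over an alphabet consisting of the genuine (weighted) edges of $G$ together with, for each $j$, a supply of $\beta_j-\lambda_j(G)$ unweighted ``slack'' letters bridging vertices $j-1$ and $j$, subject to the placement rule that every letter sit between its two endpoint-markers in the word. With this viewpoint, $P_\bbeta(G)$ is a straightforward lattice-word count, and the formula \eqref{equ:qG} becomes the coefficient of a formal monomial in the $\log$ of the associated generating function over all long-edge graphs. The goal is thus to show that this coefficient is linear in $\bbeta$.

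Next, I would define a height function $h$ on prefixes of each word: $h$ should track the number of edges that have been ``opened'' (i.e., whose left endpoint has been passed) but not yet ``closed,'' weighted so that $h$ starts and ends at the same value. A word is then called \emph{irreducible} if $h$ never returns to its initial value in the interior. Every word factors uniquely as a concatenation of irreducible pieces at the positions where $h$ touches its minimum, and this unique factorisation is precisely the combinatorial image of the $\exp$ identity. Pushing the signs through this bijection should show that $\Phi_\bbeta(G)$ equals (up to sign) the unsigned count of irreducible words whose edge-content is exactly that of $G$.

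Finally, linearity would follow because an irreducible word built on the fixed edges of $G$ severely constrains where the slack letters can go: each slack letter bridging vertices $j-1$ and $j$ is confined to a bounded set of positions dictated by the ``irreducible skeleton'' formed by the genuine edges of $G$, so the total count breaks up as a finite sum over finitely many such skeletons of terms each affine in the appropriate $\beta_j$. The main obstacle, and where I would spend the real work, is the middle step: defining the height function so that (a)~the irreducible factorisation is genuinely unique, (b)~the signs in the $\log$ expansion cancel exactly as prescribed by that factorisation, and (c)~the whole mechanism survives under the sharper hypothesis $\beta_j \ge \olambda_j(G)$ rather than $\beta_j \ge \lambda_j(G)$, which forces one to allow edges parallel to $\{j-1,j\}$ to absorb some of the slack rather than being counted separately.
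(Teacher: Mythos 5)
Your blueprint has the right flavor---translate to word enumeration, introduce a height function and a notion of irreducibility, and exploit a unique decomposition into irreducible pieces to control the alternating sum defining $\Phi_\bbeta(G)$. This is indeed the skeleton of the paper's argument (Sections \ref{sec:polyreci}--\ref{sec:finalproof}). But three of the concrete steps you propose do not match what is actually needed and, as stated, would not work.

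First, the reciprocity step is missing, and it is essential. The paper does \emph{not} directly decompose the extended orderings counted by $P_\bbeta$. Instead it introduces a separate family, the $(\tau,\bn)$-words in $S_\tau(\bn,\t)$ (tuples of $\ell$ words over $\{s_0,\dots,s_m\}$, with the $j$-th word having length $t_j + \lambda_j$), and proves the combinatorial reciprocity $p_\tau(\bn,\bbeta) = (-1)^{\bn} s_\tau(\bn,-\bbeta-\1)$. It is the family $S_\tau(\bn,\t)$ with $\t\in\N^\ell$ that admits the clean product decomposition $\cS_{\tau,\t}(\x) = \prod_j \cS^{\irr}_{\tau,\be_j}(\x)^{t_j}\cdot \cS_{\tau,\0}(\x)$; the objects you want to decompose (words with exactly $\beta_j - \lambda_j$ slack letters in gap $j$) do not factor this way, and the exponents $-\beta_j-1$ in the resulting formula only appear after the reciprocity substitution.

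Second, your height function is a scalar running on prefix positions (tracking ``opened-not-closed'' edges), and your irreducibility criterion is ``never returns to its starting value.'' The paper's height function is a \emph{vector} $h(\w) = (h_1,\dots,h_\ell)$ attached to a whole tuple of words and additive under concatenation (Lemma \ref{lem:height}, Corollary \ref{cor:additivity}), and irreducibility means having no proper initial subword of the same prescribed height \emph{vector}. Correspondingly, the decomposition is not obtained from a cycle-lemma-type ``cut at minima'' argument; it comes from a greedy algorithm (FIS) whose correctness (Lemmas \ref{lem:stepb}, \ref{lem:stepa}, \ref{lem:iqb}) needs proof. Your scalar picture gives no hint how to disentangle the $\ell$ different slack parameters $\beta_1,\dots,\beta_\ell$, which a one-dimensional height cannot distinguish.

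Third, your explanation of linearity is wrong. You say linearity would follow because ``each slack letter is confined to a bounded set of positions.'' In fact, the irreducible sets $S^{\irr}_\tau(\bn,\be_j)$ do not depend on $\bbeta$ at all---they have fixed prescribed lengths. Linearity instead comes from the exponents in the product formula: $\log\sP_{\tau,\bbeta} = \sum_j(-\beta_j-1)\log F_\tau^{(j)}(-\x) + \log H_\tau(-\x)$, which is manifestly affine in $\bbeta$. In particular your claim that $\Phi_\bbeta(G)$ equals (up to sign) the unsigned count of irreducible words is false: $\varphi_\tau(\bn,\bbeta)$ is, by Equation \eqref{equ:qtau}, a $\bbeta$-combination of the coefficients of $\x^{\bn}$ in $\log F_\tau^{(j)}(\x)$---that is, of a second logarithm---not the coefficients of $\x^{\bn}$ in $F_\tau^{(j)}(\x)$ itself, which is what $|S^{\irr}_\tau(\bn,\be_j)|$ equals. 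You have collapsed two distinct layers ($\log$-of-generating-function versus count-of-irreducibles) that the paper keeps carefully separate.

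In short: you have correctly guessed the key ingredients (words, a height invariant, irreducible decomposition, cancellation from the $\log$), but the reciprocity step, the vector nature of the height function, and the actual mechanism producing linearity are all missing or misstated, and each one is load-bearing.
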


      \begin{rem}\label{rem:betalength}
	By the definitions of $\Phi_\bbeta(G)$, one sees that $\Phi_\bbeta(G)$ is only determined by numbers $\beta_{\minv(G)+1}, \beta_{\minv(G)+2}, \dots, \beta_{\maxv(G)}$. Hence, the conclusion of Theorem \ref{thm:main0} can be strengthened to ``the values $\Phi_\bbeta \left( G \right)$ are given by a linear multivariate function in $\beta_{\minv(G)+1},$ $\beta_{\minv(G)+2},$ $\dots,$ $\beta_{\maxv(G)}$''.
\end{rem}

\begin{ex}[Example of $\Phi_\bbeta$] \label{ex:funcPhi}
  Let $\bbeta,$ $G$ and $G'$ be as described in Example \ref{ex:funcP}. There are two partitions of $G$: $\left(G\right)$ and $\left(G', G'\right).$ Hence,\[ \Phi_\bbeta\left(G\right) = \frac{(-1)^{1+1}}{1} P_\bbeta\left(G\right)  + \frac{(-1)^{2+1}}{2} P_\bbeta\left(G'\right) P_\bbeta\left(G'\right) = P_\bbeta\left(G\right)  - \frac{1}{2} P_\bbeta^2\left(G'\right).\]
  Recall that we have computed $P_\bbeta\left(G\right)$ and $P_\bbeta(G')$ in Example \ref{ex:funcP}. Hence,
  \[    \Phi_\bbeta\left(G\right) =\begin{cases}
    0 - \frac{1}{2} \cdot 0^2 = 0, & \beta_1 = 0, 1; \\
    0 - \frac{1}{2} \cdot 1^2 = - \frac{1}{2}, & \beta_1 = 2; \\
    0 - \frac{1}{2} \cdot 2^2 = - 2, & \beta_1 = 3; \\
    \binom{\beta_1-2}{2} - \frac{1}{2} \cdot (\beta_1-1)^2 = - \frac{1}{2}(3\beta_1-5), & \beta_1 \ge 4.
  \end{cases}
  \]
  Note that $\olambda(G) = (2, 0, 0, \dots).$ Hence, $\beta_j \ge \olambda_j(G)$ for all $j$ if and only if $\beta_1 \ge 2.$ We check that for $\beta_1= 2,$ $\Phi_\bbeta(G) = -\frac{1}{2} = \left.- \frac{1}{2}(3\beta_1-5)\right|_{\beta_1=2}$ and, for $\beta_1= 3,$ $\Phi_\bbeta(G) = -2 = \left.- \frac{1}{2}(3\beta_1-5)\right|_{\beta_1=3}$. Hence, $\Phi_\bbeta(G) =  - \frac{1}{2}(3\beta_1-5),$ for all $\beta_1 \ge 2,$ agreeing with Theorem \ref{thm:main0}.
\end{ex}
We see from the above example that unlike $P_\bbeta(G),$ the value of $\Phi_\bbeta(G)$ is not necessarily $0$ when $G$ is not $\bbeta$-allowable. (In fact, $\Phi_\bbeta(G)$ is not even necessarily $0$ when $G$ does not satisfy the condition $\beta_j \ge \olambda_j(G)$ for all $j.$)

In addition to Theorem \ref{thm:main0}, which is a result on $\Phi_\bbeta(G),$ we also have a fundamental but important result on $\Phi_\bbeta^s(G).$ 
\begin{lem}\label{lem:vanish0}
Suppose $G$ is not a shifted template. Then
\[ \Phi_\bbeta^s(G) = 0.\]
\end{lem}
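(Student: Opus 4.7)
\textbf{Proof proposal for Lemma~\ref{lem:vanish0}.}
My plan is to show that when $G$ is not a shifted template, the generating function underlying $\Phi_\bbeta^s$ factors across an internal gap of $G$, so its logarithm---and in particular the coefficient of $z^G$---vanishes. First I would observe that $G$ fails to be a shifted template precisely when there is an internal gap vertex $c$ with $\minv(G) < c < \maxv(G)$ crossed by no edge of $G$. Fix such a $c$ and decompose $G = H_1 \sqcup H_2$ as a disjoint union of (non-empty) edge multisets, where $H_1$ consists of the edges of $G$ with both endpoints in $[\minv(G), c]$ and $H_2$ of those in $[c, \maxv(G)]$.

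The crux is a multiplicativity statement: for every sub-multiset $G' \subseteq G$, writing $G' = G'_L \sqcup G'_R$ for the induced decomposition into the $H_1$-part and $H_2$-part, we should have
\[
  P_\bbeta^s(G') \;=\; P_\bbeta^s(G'_L) \cdot P_\bbeta^s(G'_R).
\]
Strict $\bbeta$-allowability decomposes because $\lambda_j(G') = \lambda_j(G'_L)$ for $j \le c$ and $\lambda_j(G') = \lambda_j(G'_R)$ for $j > c$ (using that no edge crosses $c$), and because any edge of $G$ incident to vertex $0$ (resp.\ $M+1$) automatically lies in $H_1$ (resp.\ $H_2$), so the weight-one clauses of strict allowability transfer cleanly. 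For the count of inequivalent extended orderings, the key observation is that a weighted edge of $G'_L$ can only occupy a slot $(v-1,v)$ with $v \le c$, while a weighted edge of $G'_R$ only occupies slots with $v \ge c+1$; the added edges of $\ext_\bbeta$ separate the same way. Hence an $\bbeta$-extended ordering of $G'$ is specified by independent data on either side of $c$, and the ``inert'' slots of $\ext_\bbeta(G'_L)$ (resp.\ $\ext_\bbeta(G'_R)$) that contain only indistinguishable unweighted added edges contribute a factor of one each, so the count factors.

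Given multiplicativity, I would work in the polynomial ring over $\mathbb{Q}$ in formal variables $z_\tau$ indexed by edge types $\tau$ of $G$, with $z^{G'} := \prod_\tau z_\tau^{m_\tau(G')}$, and form
\[
  \Psi \;=\; \sum_{G' \subseteq G} P_\bbeta^s(G')\, z^{G'}, \qquad \Psi_i \;=\; \sum_{G' \subseteq H_i} P_\bbeta^s(G')\, z^{G'}.
\]
Using $P_\bbeta^s(\emptyset) = 1$ from Remark~\ref{rem:emptygraph}, multiplicativity becomes $\Psi = \Psi_1 \cdot \Psi_2$, so $\log \Psi = \log \Psi_1 + \log \Psi_2$. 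Directly from \eqref{equ:qdG}, $\Phi_\bbeta^s(G)$ is the coefficient of $z^G$ in $\log \Psi$; but since $H_1$ and $H_2$ are both proper sub-multisets of $G$, the series $\log \Psi_1$ and $\log \Psi_2$ are supported on monomials $z^{G'}$ with $G' \subsetneq G$, forcing their $z^G$-coefficients to vanish. The main obstacle will be the slot-by-slot verification underlying multiplicativity: the geometric splitting is clear, but carefully tracking the equivalence by automorphisms of indistinguishable edges, and pinning down the boundary cases where $c$ is adjacent to (or equal to) $\minv(G)$, $\maxv(G)$, $0$, or $M+1$, requires routine but careful bookkeeping.
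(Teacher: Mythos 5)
Your argument is correct and is essentially the paper's own: the paper reformulates everything via $\tau$-graphs (Lemma~\ref{lem:vanish}) and then exploits exactly the same factorization of the generating function across the gap --- the $P$-statistic is multiplicative across the two disjoint groups of edge types, so the log of the generating function splits into two pieces each missing one group of variables, killing the coefficient of the ``mixed'' monomial. The only cosmetic differences are that the paper phrases the split in terms of the index sets $I_i$ of edge types rather than a gap vertex $c$, and first carries out the argument for the non-strict $P_\bbeta$, $\Phi_\bbeta$ before observing the strict case follows either verbatim or from Lemma~\ref{lem:PPhi}, whereas you work directly with $P_\bbeta^s$ and $\Phi_\bbeta^s$.
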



Theorem \ref{thm:main0} and Lemma \ref{lem:vanish0} will be reformulated in Section \ref{sec:reformulate} and proved afterwards. Before that, we discuss the results we obtain by applying them to the classical Severi degrees. 
For the rest of the section, we will introduce Fomin-Mikhalkin's formula for the classical Severi degree using long-edge graphs and its logarithmic version, and discuss an immediate consequence of Lemma \ref{lem:vanish0} on the logarithmic version of the formula, providing the original motivation for the author to consider the functions $\Phi_\bbeta^s$ and $\Phi_\bbeta$.

\subsection*{Analyzing Fomin-Mikhalkin's formula}
For the classical Severi degree, we only need to use $\bbeta = (0, 1, 2, \dots, d)$. Therefore, we give the following notation:
\begin{align}
  &\v(d) := (0, 1, 2, \dots, d), \qquad  \forall d \in \P.
\end{align}


Below is Fomin-Mikhalkin's formula for classical Severi degrees \cite{FomMik2010}.
\begin{thm}[Fomin-Mikhalkin]
  The Severi degree $N^{d, \delta}$ is given by
  \begin{equation}
    N^{d, \delta} = \sum_G \mu(G) P_{\v(d)}^s(G),
  \end{equation}
  where the summation is over all the long-edge graphs of cogenus $\delta$. 
\end{thm}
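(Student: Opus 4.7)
The plan is to recover this identity by combining two previously established combinatorial reformulations: Brugall\'e--Mikhalkin's enumerative formula for $N^{d,\delta}$ in terms of marked labeled floor diagrams, and Fomin--Mikhalkin's bijection between those diagrams and long-edge graphs equipped with extended orderings. Since the whole framework of long-edge graphs was engineered precisely so that the count of markings equals $P^s_{\v(d)}(G)$, the proof amounts to translating carefully between the two combinatorial models and matching statistics.

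First I would recall the Brugall\'e--Mikhalkin formula: $N^{d,\delta} = \sum_{\mathcal{D}} \mu(\mathcal{D})$, where the sum ranges over marked labeled floor diagrams $\mathcal{D}$ of degree $d$ and cogenus $\delta$, and $\mu(\mathcal{D}) = \prod_{e} \rho(e)^2$ is the product of squared edge weights. A floor diagram is an oriented weighted graph on vertices $\{1,2,\dots,d\}$ (the floors) subject to a divergence condition forcing the signed weighted degree at each floor to equal $-1$; a labeling is a linear extension compatible with the orientation, and a marking further interleaves edges with vertices.

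Next I would establish a weight-preserving bijection between marked labeled floor diagrams of degree $d$ and pairs $(G, o)$ where $G$ is a long-edge graph strictly $\v(d)$-allowable and $o$ is an equivalence class of $\v(d)$-extended orderings of $G$. The bijection sends each long edge of $\mathcal{D}$ (meaning every edge not of weight one between consecutive floors) to the weighted edge of $G$ on the corresponding vertices, and conversely absorbs the short edges of $\mathcal{D}$ into the unweighted edges added by the $\ext_{\v(d)}$ operation. The divergence condition at each floor forces the total weight crossing the boundary between floors $j-1$ and $j$ to equal $j$, which matches exactly the requirement $\beta_j = j \geq \lambda_j(G)$ with the number of short edges at that position equal to $\beta_j - \lambda_j(G)$. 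The strictness conditions on edges incident to vertices $0$ and $M+1 = d$ correspond to the boundary floors carrying only short edges, as dictated by the divergence equation. Finally, the marking of $\mathcal{D}$ is precisely a $\v(d)$-extended ordering of $G$, and the equivalence relation on orderings matches the symmetry group identifying indistinguishable floor diagrams.

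Once the bijection is in hand, matching statistics is immediate: $\mu(\mathcal{D}) = \mu(G)$ because the added short edges have weight $1$ and contribute trivially to the product of squares, and $\delta(\mathcal{D}) = \delta(G)$ for the same reason. Grouping the Brugall\'e--Mikhalkin sum by the underlying long-edge graph then yields
\[
N^{d,\delta} = \sum_{G} \mu(G)\cdot \#\{\text{markings of $G$}\} = \sum_{G} \mu(G)\, P^s_{\v(d)}(G),
\]
where the sum ranges over long-edge graphs of cogenus $\delta$ (non-strictly-allowable $G$ contribute zero, so the restriction is harmless). The principal obstacle is the bookkeeping in the bijection step: one must carefully verify the translation of the divergence condition into the numerical equality $\beta_j - \lambda_j(G) = $ number of short edges between floors $j-1$ and $j$, handle the boundary floors $1$ and $d$ correctly so that strict allowability exactly captures the admissible floor diagrams, and confirm that the equivalence on extended orderings corresponds precisely to the automorphism group of marked floor diagrams.
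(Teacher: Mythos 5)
This theorem is not proved in the paper at all: it is stated as a quotation from Fomin--Mikhalkin \cite{FomMik2010} and used as a black box, so there is no "paper's own proof" to compare against. That said, your sketch is an accurate reconstruction of the route the cited reference actually takes and matches the paper's own one-line summary of it in the introduction (Brugall\'e--Mikhalkin's marked labeled floor diagram count, followed by a weight-preserving bijection between marked labeled floor diagrams of degree $d$ and strictly $\v(d)$-allowable long-edge graphs together with an equivalence class of extended orderings). The grouping of the floor diagram sum by underlying long-edge graph, the observation that short edges contribute trivially to $\mu$ and $\delta$, and the remark that non-strictly-allowable graphs contribute zero are all the right points, and you correctly flag the divergence bookkeeping and the boundary-floor conditions as the places where care is needed.

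One small indexing slip to fix: with $\v(d) = (0,1,\dots,d)$ written as $\bbeta = (\beta_1,\dots,\beta_{d+1})$, one has $\beta_j = j-1$, not $\beta_j = j$. So the divergence constraint should read that the total weight crossing the cut between vertices $j-1$ and $j$ of the long-edge graph equals $j-1$, and the number of added short edges there is $(j-1) - \lambda_j(G)$. This is exactly the kind of off-by-one you already acknowledge needs checking, and it does not affect the validity of the overall plan.
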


Recall that $\cN(d)$ and $\cQ(d)$ are defined as in \eqref{equ:defncN} and \eqref{equ:defncQ} respectively.
Applying the above theorem to the generating function $\cN(d)$, we get
\[ \cN(d) = 1 + \sum_{\delta \ge 1} N^{d, \delta} t^\delta = 1 + \sum_G \mu(G) P_{\v(d)}^s(G) \ t^{\delta(G)},\]
where the summation is over all the (non-empty) long-edge graphs.
Taking logarithms on both sides of the above formula, we obtain
\begin{equation}\label{equ:qddelta0} 
  Q^{d, \delta} = \sum_{i \ge 1} \frac{(-1)^{i+1}}{i} \sum_{(G_1, \dots, G_i)}  \left(\prod_{j=1}^i \mu(G_j) P_{\v(d)}^s(G_j)\right),
\end{equation}
where the summation is over all the tuples $(G_1, \dots, G_i)$ of (non-empty) long-edge graphs satisfying $\sum_{j=1}^i \delta(G_j) = \delta.$ Since we can consider any such tuple a partition of a long-edge graph of cogenus $\delta,$
by \eqref{equ:partofG} and \eqref{equ:qdG}, we can rewrite \eqref{equ:qddelta0}:
\begin{equation}\label{equ:qddelta} 
  Q^{d, \delta} = \sum_G \mu(G) \Phi_{\v(d)}^s(G),
  \end{equation}
  where the summation is over all the long-edge graphs of cogenus $\delta$. This is the reason why we consider $\Phi_\bbeta^s(G)$ (respectively, $\Phi_\bbeta(G)$) the logarithmic version of $P_\bbeta^s(G)$ (respectively, $P_\bbeta(G)$).

One benefit of computing $Q^{d, \delta}$ instead of $N^{d, \delta}$ is that a lot of terms in \eqref{equ:qddelta} vanish. 
%
We have the following corollary to Lemma \ref{lem:vanish0}.

\begin{cor}\label{cor:shifttemp} For $\delta \ge 1,$
  \begin{equation}\label{equ:shifttemp}
    Q^{d, \delta} = \sum_\Gamma \mu(\Gamma) \sum_{k \in \N} \Phi_{\v(d)}^s\left(\Gamma_{(k)}\right),
\end{equation}
  where the first summation is over all the templates of cogenus $\delta$. 
\end{cor}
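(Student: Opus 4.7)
The plan is to start from identity \eqref{equ:qddelta}, which expresses $Q^{d,\delta}$ as the sum $\sum_G \mu(G)\Phi^s_{\v(d)}(G)$ over all long-edge graphs $G$ of cogenus $\delta$, and use Lemma \ref{lem:vanish0} to strip it down. The lemma asserts that $\Phi^s_{\v(d)}(G) = 0$ unless $G$ is a shifted template, so the sum immediately collapses to one indexed by shifted templates of cogenus $\delta$.

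Next I would reorganize the reduced sum by parameterizing each shifted template uniquely as $G = \Gamma_{(k)}$ for a template $\Gamma$ and an integer $k \in \N$. For this to be a bijection rather than just a surjection, I need every non-empty template $\Gamma$ to satisfy $\minv(\Gamma) = 0$: if $\minv(\Gamma) \ge 1$ and $\Gamma$ has at least one edge, then the template axiom applied at $i = 1 \in [1, \maxv(\Gamma) - 1]$ would demand an edge straddling vertex $1$, hence incident to vertex $0$, contradicting $\minv(\Gamma) \ge 1$. With this in hand, for any shifted template $G$ the shift $k$ is forced to equal $\minv(G)$, and $\Gamma$ is then uniquely determined.

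Finally, using that $\mu$ and $\delta$ are both invariant under the shift operation (neither the weights nor the edge-lengths change under $G \mapsto G_{(k)}$), summing over all templates $\Gamma$ of cogenus $\delta$ and all $k \in \N$ enumerates each shifted template of cogenus $\delta$ exactly once, and $\mu(\Gamma_{(k)}) = \mu(\Gamma)$ can be pulled outside the inner sum. This produces the stated formula. The inner sum over $k$ is only nominally infinite: $\Phi^s_{\v(d)}(\Gamma_{(k)})$ vanishes as soon as $k + l(\Gamma) > d+1$ by \eqref{equ:shifttempdata}, because then every edge-partition of $\Gamma_{(k)}$ must contain a part with $\maxv > d+1$, which fails to be strictly $\v(d)$-allowable and hence contributes $P^s_{\v(d)} = 0$. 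No real obstacle arises here; the argument is pure bookkeeping on top of Lemma \ref{lem:vanish0}, the one subtle point being the verification that templates have $\minv = 0$.
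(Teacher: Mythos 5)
Your proof is correct and takes the same route the paper intends: start from \eqref{equ:qddelta}, discard every non-shifted-template term via Lemma \ref{lem:vanish0}, reparametrize the surviving summands uniquely as $\Gamma_{(k)}$, and use shift-invariance of $\mu$ and $\delta$. The paper states the corollary without an explicit proof, so you have simply spelled out the bookkeeping it leaves implicit, including the one genuinely subtle point that non-empty templates satisfy $\minv(\Gamma)=0$ (which makes the parametrization a bijection).
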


It is an easy fact that for any fixed $\delta,$ there are finitely many templates of cogenus $\delta.$ Hence, the first summation in \eqref{equ:shifttemp} is finite. 
It is not hard to see that the second summation in \eqref{equ:shifttemp} has finitely many non-zero terms as well. 
Moreover, intuitively the linear function described in Theorem \ref{thm:main0} is important for computing the second summation, therefore potentially lead to combinatorial formulas for $Q^{d,\delta}$ and $Q_\delta(d).$

In the next two sections, we discuss details of results obtained by applying Theorem \ref{thm:main0} to Formula \eqref{equ:shifttemp}. The material presented in Sections \ref{sec:computeQ} and \ref{sec:determine} is irrelevant to the rest of the paper. The reader should feel free to skip it.

\section{On functions $Q^{d, \delta}$ and $Q_\delta(d)$}\label{sec:computeQ}

In this section, assuming Corollary \ref{cor:shifttemp} and Theorem \ref{thm:main0}, we prove Corollary \ref{cor:quadratic}. (Recall that Corollary \ref{cor:quadratic} states that for fixed $\delta,$ the function $Q^{d, \delta}$ is quadratic in $d$ for sufficiently large $d.$) 
We will then demonstrate how one can compute the quadratic polynomial $Q_\delta(d)$ from our results. 

\subsection{Qudraticity of $Q^{d, \delta}$ and $Q_\delta(d)$}

Because of Corollary \ref{cor:shifttemp}, we will mostly focus on templates. For convenience and clearness, we state a version of Theorem \ref{thm:main0} for templates, which follows directly from Theorem \ref{thm:main0} and Remark \ref{rem:betalength}.
\begin{cor}\label{cor:maintemplate}
Suppose $\Gamma$ is a template of length $\ell.$ Then for any $\bbeta=(\beta_1, \dots, \beta_\ell)$ satisfying $\beta_j \ge \olambda_j(\Gamma)$ for all $j,$ the values $\Phi_\bbeta(\Gamma)$ are given by a linear multivariate function in $\bbeta.$
\end{cor}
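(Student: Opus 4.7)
The plan is to derive the corollary as an immediate specialization of Theorem \ref{thm:main0}. First I would observe that a (non-empty) template $\Gamma$ necessarily satisfies $\minv(\Gamma)=0$, so its length $\ell = l(\Gamma) = \maxv(\Gamma) - \minv(\Gamma)$ equals $\maxv(\Gamma)$. To see this, suppose for contradiction that $\minv(\Gamma) = m \ge 1$. If $\maxv(\Gamma) > m$, the template condition applied at $i = m$ would demand an edge $\{j,k\}$ with $j < m < k$, but such a $j$ would have nonzero degree, contradicting the definition of $\minv$. The remaining possibility $\maxv(\Gamma) = m$ forces the only vertex of nonzero degree to be $m$, which is impossible since loops are excluded.

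Next, I would apply Theorem \ref{thm:main0} to $G = \Gamma$ with $M + 1 = \ell$. The hypothesis $\maxv(G) \le M+1$ then reduces to the equality $\maxv(\Gamma) = \ell$; the tuple $\bbeta = (\beta_1, \dots, \beta_{M+1})$ in the theorem coincides with the tuple $\bbeta = (\beta_1, \dots, \beta_\ell)$ in the corollary; and the allowability condition $\beta_j \ge \olambda_j(\Gamma)$ is identical to the one in the statement. The theorem then directly yields that $\Phi_\bbeta(\Gamma)$ is a linear function of $\bbeta$ on the prescribed region. Finally, I would invoke Remark \ref{rem:betalength} to note that $\Phi_\bbeta(\Gamma)$ depends only on the variables $\beta_{\minv(\Gamma)+1}, \dots, \beta_{\maxv(\Gamma)}$, which in this setting is the full tuple $\beta_1, \dots, \beta_\ell$; hence the refined statement in the remark coincides with the corollary's conclusion and no strengthening is required.

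There is no obstacle specific to this corollary: the entire substantive content is absorbed into Theorem \ref{thm:main0}, whose proof occupies Sections \ref{sec:reformulate} through \ref{sec:finalproof}. The only item requiring any argument is the structural observation that templates are anchored at vertex $0$, and even this is immediate from the definition.
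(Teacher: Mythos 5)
Your proposal is correct and follows essentially the same route as the paper: the paper's own justification is the single sentence that the corollary ``follows directly from Theorem \ref{thm:main0} and Remark \ref{rem:betalength}.'' The one item you spell out explicitly --- that any non-empty template has $\minv(\Gamma)=0$, hence $\maxv(\Gamma)=l(\Gamma)=\ell$ --- is a fact the paper treats as implicit in \eqref{equ:shifttempdata}, and your short argument for it is sound, so there is no gap.
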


\begin{defn}
	We denote by $\Phi(\Gamma, \bbeta)$ the linear function described in Corollary \ref{cor:maintemplate}.
\end{defn}

By \eqref{equ:shifttemp}, it is natural to define for each template $\Gamma,$
\begin{equation}\label{equ:defnQdGamma} 
Q^{d, \Gamma} := \mu(\Gamma) \sum_{k \in \N} \Phi_{\v(d)}^s\left(\Gamma_{(k)}\right).
\end{equation}
Hence, it is sufficient to show that $Q^{d, \Gamma}$ is quadratic for sufficiently large $d$. We will prove this by analyzing functions $\Phi_\bbeta^s$ and $\Phi_\bbeta$ further and give a more precise formula for $Q^{d, \Gamma}.$

We start with a preliminary defintion and a lemma.
\begin{defn}\label{defn:epsilon}
Let $G$ be a long-edge graph. We define
 \begin{equation} \epsilon_0(G) = \begin{cases}1, & \text{if all edges adjacent to the vertex $\minv(G)$ have weight $1$;} \\
    0, & \text{otherwise.}\end{cases}
    \end{equation}
 \begin{equation} \epsilon_1(G) = \begin{cases}1, & \text{if all edges adjacent to the vertex $\maxv(G)$ have weight $1$;} \\
    0, & \text{otherwise.}\end{cases}
    \end{equation}
  \end{defn}

\begin{lem}\label{lem:PPhi}
	Suppose $\bbeta = (\beta_1, \beta_2, \dots, \beta_{M+1}) \in \N^{M+1}.$
  \begin{ilist}
  \itm 
  $P_\bbeta^s(G) = P_\bbeta(G)$ if $M \ge \maxv(G)-\epsilon_1(G)$ and $1 \le \minv(G) + \epsilon_0(G)$, and is $0$ otherwise.

  \itm 
  $\Phi_\bbeta^s(G) = \Phi_\bbeta(G)$ if $M \ge \maxv(G)-\epsilon_1(G)$ and $1 \le \minv(G) + \epsilon_0(G)$, and is $0$ otherwise.
\end{ilist}
\end{lem}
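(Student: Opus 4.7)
The plan is to verify this by directly unwinding the definitions of $\bbeta$-allowability, strict $\bbeta$-allowability, $\epsilon_0(G)$, and $\epsilon_1(G)$. For part (i), first observe that the inequality $1 \le \minv(G) + \epsilon_0(G)$ is equivalent to the statement ``either no edge of $G$ is incident to vertex $0$ (so $\minv(G) \ge 1$), or every edge at vertex $0$ has weight $1$ (so $\epsilon_0(G) = 1$)''. Either alternative is exactly condition (b) in the definition of strict $\bbeta$-allowability. Analogously, $M \ge \maxv(G) - \epsilon_1(G)$ encodes ``either $\maxv(G) \le M$ (so no edge touches vertex $M+1$), or $\maxv(G) = M+1$ and every edge at $M+1$ has weight $1$'', which is condition (c). Thus, if the two conditions of the lemma hold and $G$ is $\bbeta$-allowable, $G$ is strictly $\bbeta$-allowable and $P_\bbeta^s(G) = P_\bbeta(G)$; if $G$ is not $\bbeta$-allowable, both sides vanish by convention. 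Conversely, if either condition fails, I will check case-by-case that $G$ violates one of (a), (b), (c), forcing $P_\bbeta^s(G) = 0$ as required.

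For part (ii), the key observation is that the two conditions on $G$ are inherited by every part of any partition of $G$. Indeed, if $(G_1, \dots, G_i)$ is a partition of $G$, then $\minv(G_j) \ge \minv(G)$ and $\maxv(G_j) \le \maxv(G)$, and any edge of $G_j$ incident to $\minv(G_j)$ or $\maxv(G_j)$ is also an edge of $G$ (retaining its weight). A short verification then shows the hypotheses on $G$ pass to each $G_j$, so by (i), $P_\bbeta^s(G_j) = P_\bbeta(G_j)$ for every $j$ and every partition. Summing over all partitions gives $\Phi_\bbeta^s(G) = \Phi_\bbeta(G)$.

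For the ``otherwise'' direction of (ii), I will identify an explicit \emph{bad edge} $e$ of $G$ witnessing the failure of the condition: either an edge at vertex $0$ of weight $>1$, an edge at vertex $M+1$ of weight $>1$, or an edge with an endpoint exceeding $M+1$. Since $e$ must lie in exactly one component $G_{j_0}$ of any partition $(G_1, \dots, G_i)$ of $G$, the component $G_{j_0}$ inherits the same failure, so $P_\bbeta^s(G_{j_0}) = 0$ by part (i). Hence the whole product $\prod_j P_\bbeta^s(G_j)$ vanishes, and summing shows $\Phi_\bbeta^s(G) = 0$.

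The argument is purely a translation of definitions, and there is no real obstacle beyond careful bookkeeping: part (i) reinterprets the boundary conditions of strict $\bbeta$-allowability using the indicators $\epsilon_0, \epsilon_1$, and part (ii) is a clean ``bad-edge lives in one part'' argument that leverages the additivity of edges under partitions.
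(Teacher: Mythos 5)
Your proposal is correct and takes essentially the same approach as the paper: part (i) is unpacked directly from the definitions of $\bbeta$-allowability and strict $\bbeta$-allowability, and part (ii) reduces to (i) via the partition formula, by showing the two conditions pass to every part when they hold on $G$ and fail for at least one part when they fail on $G$. Your ``bad edge lives in exactly one part'' phrasing for the ``otherwise'' direction is a cosmetic variant of the paper's observation that $\maxv(G_j) - \epsilon_1(G_j) \le \maxv(G) - \epsilon_1(G)$ and $\minv(G_j) + \epsilon_0(G_j) \ge \minv(G) + \epsilon_0(G)$ with equality achieved for at least one $j$; the $G_{j_0}$ containing your bad edge is precisely that extremal part.
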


\begin{proof}
(i) follows directly from the defintions of $P_\bbeta(G)$ and $P_\bbeta^s(G).$

We use (i) to prove (ii). Comparing equations \eqref{equ:qdG} and \eqref{equ:qG} for $\Phi_\bbeta^s(G)$ and $\Phi_\bbeta(G),$ it is sufficient to prove that for any partition $(G_1, \dots, G_i)$ of $G$, we have
\[ \prod_{j=1}^i P_\bbeta^s(G_j) = \begin{cases}
  \prod_{j=1}^i P_\bbeta(G_j) & \text{ if $M \ge \maxv(G)-\epsilon_1(G)$ and $1 \le \minv(G) + \epsilon_0(G)$}; \\
  0 & \text{ otherwise.}
\end{cases}\]
Suppose $(G_1, \dots, G_i)$ is a partition of $G.$ We check that for any $1 \le j \le i,$ we have 
\[ \maxv(G_j) - \epsilon_1(G_j) \le \maxv(G) - \epsilon_1(G) \text{ and } \minv(G_j) + \epsilon_0(G_j) \ge \minv(G) + \epsilon_0(G),\]
and each equality holds for at least one $j.$ 
Hence, if $M \ge \maxv(G)-\epsilon_1(G)$ and $1 \le \minv(G) + \epsilon_0(G)$, we have $M \ge \maxv(G_j)-\epsilon_1(G_j)$ and $1 \le \minv(G_j) + \epsilon_0(G_j)$ for all $j.$
Thus, by (i), $\prod_{j=1}^i P_\bbeta^s(G_j) = \prod_{j=1}^i P_\bbeta(G_j)$. 

Otherwise, we have $M < \maxv(G)-\epsilon_1(G)$ or $1 < \minv(G) + \epsilon_0(G)$. Then there exists $j$ such that $M < \maxv(G_j)-\epsilon_1(G_j)$ or $1 < \minv(G_j) + \epsilon_0(G_j)$, which implies that $P_\bbeta^s(G_j) = 0$ by (i). Hence, we have $\prod_{j=1}^i P_\bbeta^s(G_j) = 0.$
\end{proof}

\begin{cor}\label{cor:Phitemp}
	Suppose $\bbeta = (\beta_1, \beta_2, \dots, \beta_{M+1}) \in \N^{M+1}$ and $\Gamma$ is a template. Then
  $\Phi_{\bbeta}^s\left( \Gamma_{(k)} \right) = \Phi_{\bbeta}\left( \Gamma_{(k)} \right)$ if 
  $1 - \epsilon_0(\Gamma) \le k \le M + \epsilon_1(\Gamma) - l(\Gamma),$
  and is $0$ otherwise.
\end{cor}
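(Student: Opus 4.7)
The plan is to derive this corollary as an essentially direct application of Lemma \ref{lem:PPhi}(ii) to the graph $G = \Gamma_{(k)}$, combined with the observation that the shift operation $\Gamma \mapsto \Gamma_{(k)}$ interacts nicely with every quantity that appears in that lemma. Specifically, by \eqref{equ:shifttempdata} we have $\minv(\Gamma_{(k)}) = k$ and $\maxv(\Gamma_{(k)}) = k + l(\Gamma)$, so the two hypotheses of Lemma \ref{lem:PPhi}(ii), namely $M \ge \maxv(G)-\epsilon_1(G)$ and $1 \le \minv(G)+\epsilon_0(G)$, will translate directly into the stated inequalities on $k$.

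The first step will be to record that shifting does not change the weights or the local incidence pattern of the extremal vertices: the edges incident to $\maxv(\Gamma_{(k)}) = k+l(\Gamma)$ are exactly the shifts of the edges incident to $\maxv(\Gamma) = l(\Gamma)$ in $\Gamma$, and likewise for $\minv$. Hence $\epsilon_0(\Gamma_{(k)}) = \epsilon_0(\Gamma)$ and $\epsilon_1(\Gamma_{(k)}) = \epsilon_1(\Gamma)$, both of which follow immediately from Definition \ref{defn:epsilon}.

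The second step will be the substitution. Plugging $\maxv(\Gamma_{(k)}) = k+l(\Gamma)$, $\minv(\Gamma_{(k)}) = k$, $\epsilon_1(\Gamma_{(k)}) = \epsilon_1(\Gamma)$, and $\epsilon_0(\Gamma_{(k)}) = \epsilon_0(\Gamma)$ into the two inequalities of Lemma \ref{lem:PPhi}(ii), the condition $M \ge \maxv(\Gamma_{(k)}) - \epsilon_1(\Gamma_{(k)})$ becomes $k \le M + \epsilon_1(\Gamma) - l(\Gamma)$, and the condition $1 \le \minv(\Gamma_{(k)}) + \epsilon_0(\Gamma_{(k)})$ becomes $k \ge 1 - \epsilon_0(\Gamma)$. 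Taking both together gives the range $1 - \epsilon_0(\Gamma) \le k \le M + \epsilon_1(\Gamma) - l(\Gamma)$ in which $\Phi_{\bbeta}^s(\Gamma_{(k)}) = \Phi_{\bbeta}(\Gamma_{(k)})$, and the vanishing claim outside this range follows from the ``otherwise'' clause of Lemma \ref{lem:PPhi}(ii).

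There is no real obstacle here; the corollary is purely bookkeeping, and the only point requiring even a line of justification is the shift-invariance of $\epsilon_0$ and $\epsilon_1$, which is transparent from the definitions. The proof will therefore be short: one sentence on the $\epsilon_i$ being shift-invariant, one sentence translating the two inequalities, and a concluding appeal to Lemma \ref{lem:PPhi}(ii).
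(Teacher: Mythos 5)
Your proposal is correct and takes exactly the same route as the paper, which simply says the corollary follows from Lemma \ref{lem:PPhi}/(ii) and \eqref{equ:shifttempdata}. You have merely spelled out the bookkeeping (shift-invariance of $\epsilon_0, \epsilon_1$ and the translation of the two inequalities) that the paper leaves implicit.
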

\begin{proof}
	This immediately follows from Lemma \ref{lem:PPhi}/(ii) and  \eqref{equ:shifttempdata}.
\end{proof}

We now use the above corollary to give a precise summation formula for $Q^{d, \Gamma}$ (defined as in \eqref{equ:defnQdGamma}) which only involves values in the form of $\Phi_\bbeta(\Gamma),$
defining
\[\v(k, \ell) := (k, k+1, \dots, k+\ell-1), \qquad \forall k \in \N \text{ and } \forall \ell \in \P.\]
\begin{cor}\label{cor:formQdGamma}
  Suppose $\Gamma$ is a template. Then
  \[ Q^{d,\Gamma}   =\begin{cases}
	  \ds \mu(\Gamma) \sum_{k=1}^{d + \epsilon_1(\Gamma) - l(\Gamma)} \Phi_{\v(k, l(\Gamma))}\left( \Gamma \right),& \text{ if $d \ge l(\Gamma) - \epsilon_1(\Gamma)$};\\
    0, & \text{ if $d < l(\Gamma) - \epsilon_1(\Gamma)$}.
  \end{cases}\]
\end{cor}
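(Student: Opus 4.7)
The plan is to unwind the definition $Q^{d, \Gamma} = \mu(\Gamma) \sum_{k \in \N} \Phi_{\v(d)}^s(\Gamma_{(k)})$, restrict the sum to a finite range on which $\Phi^s = \Phi$, convert each summand to a $\Phi$ on the unshifted template $\Gamma$, and finally dispose of the possible boundary contribution at $k = 0$.

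First, I would apply Corollary \ref{cor:Phitemp}. The sequence $\v(d) \in \N^{d+1}$ lies in $\N^{M+1}$ with $M = d$, so that corollary yields $\Phi_{\v(d)}^s(\Gamma_{(k)}) = \Phi_{\v(d)}(\Gamma_{(k)})$ when $1 - \epsilon_0(\Gamma) \le k \le d + \epsilon_1(\Gamma) - l(\Gamma)$ and $\Phi_{\v(d)}^s(\Gamma_{(k)}) = 0$ otherwise. In particular, if $d < l(\Gamma) - \epsilon_1(\Gamma)$ the upper endpoint is negative, every summand vanishes, and $Q^{d, \Gamma} = 0$; this is the second case of the formula.

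Next, I would prove the identity $\Phi_{\v(d)}(\Gamma_{(k)}) = \Phi_{\v(k, l(\Gamma))}(\Gamma)$. By Remark \ref{rem:betalength} the left-hand side depends only on the entries $\beta_{k+1}, \ldots, \beta_{k + l(\Gamma)}$ of $\v(d)$, which are $k, k+1, \ldots, k + l(\Gamma) - 1$ and thus form exactly the sequence $\v(k, l(\Gamma))$ read off by the right-hand side. Shifting all vertex labels of $\Gamma$ by $k$ gives a weight- and partition-preserving bijection on extended orderings, so the combinatorial definitions agree term-by-term and the two values of $\Phi$ coincide.

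The main obstacle is the lower endpoint: when $\epsilon_0(\Gamma) = 1$ the admissible range starts at $k = 0$, yet the claimed formula begins at $k = 1$. To show the $k = 0$ contribution is zero, I would apply the identity above to rewrite it as $\Phi_{\v(0, l(\Gamma))}(\Gamma)$, where $\beta_1 = 0$. Since $\Gamma$ is a template with $\minv(\Gamma) = 0$, vertex $0$ is incident to at least one edge of $\Gamma$; in any partition $(G_1, \ldots, G_i)$ of $\Gamma$, the part $G_j$ containing such an edge satisfies $\lambda_1(G_j) \ge 1 > 0 = \beta_1$, so $G_j$ is not $\v(0, l(\Gamma))$-allowable and $P_{\v(0, l(\Gamma))}(G_j) = 0$. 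Hence every product in the alternating sum defining $\Phi_{\v(0, l(\Gamma))}(\Gamma)$ vanishes. Assembling the three steps produces $Q^{d, \Gamma} = \mu(\Gamma) \sum_{k=1}^{d + \epsilon_1(\Gamma) - l(\Gamma)} \Phi_{\v(k, l(\Gamma))}(\Gamma)$, which is the first case.
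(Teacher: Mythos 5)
Your proof is correct and follows essentially the same route as the paper: apply Corollary \ref{cor:Phitemp} to truncate the sum, use the shift identity $\Phi_{\v(d)}(\Gamma_{(k)}) = \Phi_{\v(k,l(\Gamma))}(\Gamma)$ (which in the paper is justified just by ``it follows from the definition''), and argue that the $k=0$ term vanishes. The only difference is cosmetic: the paper disposes of $k=0$ with the terse remark that $\Gamma_{(0)} = \Gamma$ is not $\v(d)$-allowable, whereas you make explicit the step (that since $\beta_1 = 0 < \lambda_1(G_j)$ for the part $G_j$ carrying an edge at vertex $0$, every product in the alternating sum vanishes) needed to conclude $\Phi_{\v(d)}(\Gamma) = 0$, which is a worthwhile clarification since non-allowability alone does not force $\Phi$ to vanish in general.
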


\begin{proof}By Corollary \ref{cor:Phitemp} and the fact that $\Gamma_{(0)} = \Gamma$ is not $\v(d)$-allowable, we immediately have that
  \[ Q^{d,\Gamma}   =\begin{cases}
	  \ds \mu(\Gamma) \sum_{k=1}^{d + \epsilon_1(\Gamma) - l(\Gamma)} \Phi_{\v(d)}\left( \Gamma_{(k)} \right),& \text{ if $d \ge l(\Gamma) - \epsilon_1(\Gamma)$};\\
    0, & \text{ if $d < l(\Gamma) - \epsilon_1(\Gamma)$}.
  \end{cases}\]
Moreover, assuming $1 \le k \le d + \epsilon_1(\Gamma)-l(\Gamma),$ it follows from the definition of $\Phi_\bbeta(G)$ that,
\[ \Phi_{\v(d)}\left( \Gamma_{(k)} \right) = \Phi_{\v(k,l(\Gamma))}(\Gamma).\]
\end{proof}

Because of Corollary \ref{cor:formQdGamma}, we want to determine when $\Phi_{\v(k, l(\Gamma))}(\Gamma)$ is given by the linear function described in Corollary \ref{cor:maintemplate}.

Note that\[ \v(k, l(\Gamma)) = (k, k+1, \dots, k+l(\Gamma)-1) \ge (\olambda_1(\Gamma), \olambda_2(\Gamma), \dots, \olambda_{l(\Gamma)}(\Gamma)) \] 
if and only if
\[ k \ge \olambda_j(\Gamma) -j+1, \qquad \forall j.\]
Hence, it's natural to define
		\begin{equation}\label{equ:kmin0}
			k_{\min}(\Gamma) := \max\left( 1, \max \{ \olambda_j(\Gamma)-j+1 \}\right).
		\end{equation}
		(We remark that the definition of $k_{\min}$ is different from those defined in \cite{FomMik2010} or \cite{block2011}.)

Then we have the following result.
%
\begin{lem}\label{lem:linearink}
	Suppose $\Gamma$ is a template of length $\ell$. For any $k \ge k_{\min}(\Gamma)$, the values $\Phi_{\v(k,\ell)}\left( \Gamma \right)$ is a linear function in $k$. 
\end{lem}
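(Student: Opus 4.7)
The plan is to deduce this lemma directly from Corollary \ref{cor:maintemplate} by specializing the multivariate linear function to the one-parameter family $\bbeta = \v(k, \ell)$. The key point is that the cutoff $k_{\min}(\Gamma)$ is engineered precisely so that the componentwise inequality $\beta_j \ge \olambda_j(\Gamma)$ needed for Corollary \ref{cor:maintemplate} is automatically satisfied.

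First, I would unpack the definition $\v(k,\ell) = (k, k+1, \ldots, k+\ell-1)$ and observe that the $j$-th coordinate is $k+j-1$. Asking $k+j-1 \ge \olambda_j(\Gamma)$ for every $j \in \{1,\dots,\ell\}$ is the same as asking $k \ge \olambda_j(\Gamma)-j+1$ for every such $j$, i.e.\ $k \ge \max_j\{\olambda_j(\Gamma)-j+1\}$. Combined with the harmless bound $k \ge 1$, this is exactly $k \ge k_{\min}(\Gamma)$ by the definition \eqref{equ:kmin0}. Hence for every such $k$, the tuple $\v(k,\ell)$ lies in the region where Corollary \ref{cor:maintemplate} applies.

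Next, I would apply Corollary \ref{cor:maintemplate} to obtain $\Phi_{\v(k,\ell)}(\Gamma) = \Phi(\Gamma, \v(k,\ell))$, where $\Phi(\Gamma, \bbeta)$ is an (affine) linear multivariate function in $\bbeta = (\beta_1, \ldots, \beta_\ell)$. Writing $\Phi(\Gamma, \bbeta) = c_0 + \sum_{j=1}^\ell c_j \beta_j$ for constants $c_0, c_1, \ldots, c_\ell$ depending only on $\Gamma$, the substitution $\beta_j = k+j-1$ yields
\[
\Phi_{\v(k,\ell)}(\Gamma) \;=\; c_0 + \sum_{j=1}^\ell c_j (k+j-1) \;=\; \Bigl(\sum_{j=1}^\ell c_j\Bigr) k \;+\; \Bigl(c_0 + \sum_{j=1}^\ell c_j(j-1)\Bigr),
\]
which is manifestly linear in $k$. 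This completes the proof.

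There is no real obstacle here once Corollary \ref{cor:maintemplate} is in hand; the work in this lemma is essentially bookkeeping to verify that the threshold $k_{\min}(\Gamma)$ is precisely the right condition to enter the multivariate linearity region. The substantive content has already been absorbed into Theorem \ref{thm:main0}.
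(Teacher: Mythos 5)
Your proof is correct and takes essentially the same approach as the paper: verify that $k \ge k_{\min}(\Gamma)$ is precisely equivalent to the componentwise bound $\v(k,\ell) \ge \olambda(\Gamma)$ required by Corollary \ref{cor:maintemplate}, then observe that specializing the multivariate linear function $\Phi(\Gamma,\bbeta)$ along the one-parameter family $\beta_j = k+j-1$ produces a linear function of $k$. You simply spell out the final substitution more explicitly than the paper does.
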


\begin{proof}
	As we discussed above that when $k \ge k_{\min}(\Gamma),$ we have 
\[ \v(k, \ell) = (k, k+1, \dots, k+\ell-1) \ge (\olambda_1(\Gamma), \olambda_2(\Gamma), \dots, \olambda_{\ell}(\Gamma)).\]
Hence, by Corollary \ref{cor:maintemplate}, 
\begin{equation}\label{equ:Phibeta2Phik}
\Phi_{\v(k, \ell)}(\Gamma) = \left.\Phi(\Gamma, \bbeta)\right|_{\beta_1=k, \beta_2 = k+1, \dots, \beta_{\ell} = k+\ell-1},
\end{equation}
which clearly is a linear function in $k.$
\end{proof}

We can finally state and prove the quadraticity result of $Q^{d, \Gamma}$.
\begin{cor}\label{cor:quadGamma}
Suppose $\Gamma$ is a template. Then $Q^{d, \Gamma}$ is a quadratic polynomial in $d$ for $d \ge k_{\min}(\Gamma)+l(\Gamma) - \epsilon_1(\Gamma) -1$.

\end{cor}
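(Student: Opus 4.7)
The plan is to assemble the result directly from Corollary \ref{cor:formQdGamma} and Lemma \ref{lem:linearink}. Starting from the explicit summation formula
\[ Q^{d,\Gamma} = \mu(\Gamma) \sum_{k=1}^{d + \epsilon_1(\Gamma) - l(\Gamma)} \Phi_{\v(k, l(\Gamma))}(\Gamma) \]
(valid for $d \ge l(\Gamma)-\epsilon_1(\Gamma)$), I would split the summation at the threshold $k_{\min}(\Gamma)$ into a ``tail'' part $1 \le k \le k_{\min}(\Gamma)-1$ and a ``main'' part $k_{\min}(\Gamma) \le k \le d+\epsilon_1(\Gamma)-l(\Gamma)$. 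The tail part is a constant $C$ independent of $d$.

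For the main part, Lemma \ref{lem:linearink} tells us that $\Phi_{\v(k,l(\Gamma))}(\Gamma)$ is given by a linear expression $ak+b$ in $k$ for all $k \ge k_{\min}(\Gamma)$. Summing a linear function of $k$ from a constant lower bound to $d+\epsilon_1(\Gamma)-l(\Gamma)$ yields a quadratic polynomial in $d$, via the standard formula for $\sum_{k=k_{\min}}^{N}(ak+b)$. Multiplying by $\mu(\Gamma)$ and adding the constant contribution, this shows $Q^{d,\Gamma}$ is expressed by a single quadratic polynomial in $d$ whenever the splitting is legal, i.e.\ whenever $d+\epsilon_1(\Gamma)-l(\Gamma) \ge k_{\min}(\Gamma)-1$, which is the stated bound $d \ge k_{\min}(\Gamma)+l(\Gamma)-\epsilon_1(\Gamma)-1$.

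The only point requiring a little care is the boundary case $d = k_{\min}(\Gamma)+l(\Gamma)-\epsilon_1(\Gamma)-1$: here the upper limit of the main part is $k_{\min}(\Gamma)-1$, so the main part is an empty sum, and $Q^{d,\Gamma}$ equals just $\mu(\Gamma)C$. I would verify that the quadratic polynomial $P(d)$ arising from the closed-form of the main sum is defined so that $P$ vanishes at this boundary value, so that $\mu(\Gamma)(C+P(d))$ agrees with $Q^{d,\Gamma}$ at the boundary and on all larger $d$. I also need to observe that the assumption $d \ge k_{\min}(\Gamma)+l(\Gamma)-\epsilon_1(\Gamma)-1$ automatically implies $d \ge l(\Gamma)-\epsilon_1(\Gamma)$ since $k_{\min}(\Gamma) \ge 1$ by definition \eqref{equ:kmin0}, so Corollary \ref{cor:formQdGamma} does apply.

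There is essentially no hard step here: once Theorem \ref{thm:main0} (through its template restatement Corollary \ref{cor:maintemplate} and its consequence Lemma \ref{lem:linearink}) has been invoked, the proof is a bookkeeping exercise in summing a linear function over an interval with a $d$-dependent endpoint. The most delicate aspect is simply tracking the boundary $k_{\min}(\Gamma)-1$ carefully, which is what pins down the precise threshold appearing in the statement.
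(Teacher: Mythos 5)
Your proposal is correct and follows essentially the same route as the paper: split the sum from Corollary \ref{cor:formQdGamma} at $k_{\min}(\Gamma)$, use Lemma \ref{lem:linearink} to recognize the tail-free part as a sum of a linear function (hence quadratic in the upper limit), and observe that the resulting closed-form quadratic vanishes at $y = k_{\min}(\Gamma)-1$ so the formula is valid down to the stated threshold. The paper packages this via the auxiliary polynomial $F(y)$ and cites Faulhaber's formula for the vanishing at $k_{\min}-1$, which is exactly your boundary check; your added remark that $k_{\min}(\Gamma)\ge 1$ guarantees the hypothesis of Corollary \ref{cor:formQdGamma} holds is a detail the paper leaves implicit.
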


\begin{proof}
For $y \ge k_{\min}(\Gamma),$ we define 
\[ F(y) := \sum_{k = k_{\min}(\Gamma)}^{y} \Phi_{\v(k,l(\Gamma))}\left( \Gamma \right).\]
By Lemma \ref{lem:linearink},  $F(y)$ is a quadratic polynomial in $y$, thus we can extend the domain of $F(y)$ to $\Z.$ It's a consequence of Faulhaber's formula that $F(k_{\min}-1) = 0.$

By Corollary \ref{cor:formQdGamma}, if $d \ge k_{\min}(\Gamma)+l(\Gamma) - \epsilon_1(\Gamma)-1,$ we have
  \begin{equation} \label{equ:QdGamma}  
  Q^{d, \Gamma} = \mu(\Gamma) \left(\sum_{k =1}^{k_{\min}(\Gamma)-1} \Phi_{\v(k, l(\Gamma))}\left( \Gamma \right) + \left.F(y)\right|_{y= d+\epsilon_1(\Gamma)-l(\Gamma)} \right). 
\end{equation}
Clearly the first summation is a constant, and second term is a quadratic polynomial in $d$.
\end{proof}


\begin{rem}\label{rem:threshold}
  Corollary \ref{cor:quadGamma} not only implies Corollary \ref{cor:quadratic}, but also tells us that $Q^{d, \delta}$ is quadratic in $d$ and $N^{d, \delta}$ is polynomial in $d$, for $d$ satisfying $d \ge k_{\min}(\Gamma)+l(\Gamma) - \epsilon_1(\Gamma)-1$ for each template $\Gamma$ of cogenus $\delta.$ 
  
  With some analysis of bounds for ${\olambda}_j,$ e.g. Lemma 4.2 of \cite{toricsurface}, and the easy observation $l(\Gamma)-\epsilon_1(\Gamma) \le \delta,$ one can show that $k_{\min}(\Gamma)+l(\Gamma) - \epsilon_1(\Gamma)-1$ is always no greater than $\delta.$ Therefore, we recover the threshold bound $d^* \le \delta$ obtained by Block \cite{block2011}.
\end{rem}

\subsection{Computing $Q_\delta(d)$}
We denote by $Q_\Gamma(d)$ the quadratic polynomial correpsonding to $Q^{d, \Gamma}.$ 
Therefore,
we have
\begin{align}
  Q_\delta(d) = & \sum_{\Gamma} Q_\Gamma(d). \label{equ:QdeltaQGamma}
\end{align}
where the summation is over all the templates of cogenus $\delta.$ In order to compute the polynomial $Q_\delta(d),$ we need to compute $Q_\Gamma(d)$ for each template $\Gamma$ of cogenus $\delta.$ Table \ref{tab:templates} lists all the templates of cogenus $1$ or $2$. 
Note that since every template $\Gamma$ has at least one edge incident to the vertex $0,$ we omit labels of vertices when drawing a template $\Gamma$ and assume that the vertices are $0, 1, \dots, l(\Gamma).$

\begin{sidewaystable}
\centering
\vspace*{15cm}
\begin{tabular}{c|c|c|c|c|c|c|c|c|c|c|c|c|c|c|}
$\Gamma$ &
$\hspace*{-1mm} \delta(\Gamma) \hspace*{-1mm}$ & $\hspace*{-1mm} l(\Gamma) \hspace*{-1mm}$ 
& $\hspace*{-1mm} \mu(\Gamma) \hspace*{-1mm}$ 
& $\hspace*{-1mm} \epsilon_0(\Gamma) \hspace*{-1mm}$ & $\hspace*{-1mm} \epsilon_1(\Gamma) \hspace*{-1mm}$ & $\lambda(\Gamma)$ & $\olambda(\Gamma)$ 

& \hspace*{-2mm} $k_{\min}(\Gamma)$ \hspace*{-2mm}
& $P(\Gamma, \bbeta)$
& $\Phi(\Gamma, \bbeta)$
& $\hspace*{-1mm} \zeta^0(\Gamma) \hspace*{-1mm}$ & $\hspace*{-1mm} \zeta^1(\Gamma) \hspace*{-1mm}$ & $\hspace*{-1mm} \eta_0(\Gamma) \hspace*{-1mm}$
& \hspace*{-1mm} \begin{tabular}{c}$\scriptstyle \Phi_{(k, l(\Gamma))}\left(\Gamma\right)$ \\ $\scriptstyle k \ge k_{\min}(\Gamma)$\end{tabular} \hspace*{-1mm}
\\
\hline \hline
&&&&&&&&&&&&&&\\[-.1in]
\begin{picture}(70,8)(-5,-4)\setlength{\unitlength}{2.5pt}\thicklines
\oo
\put(5,2){\makebox(0,0){$\scriptstyle 2$}}
\Eeee
\end{picture}
& 1 & 1 & 4 & 0 & 0 & (2) & (1) & 1 & $\beta_1-1$ & $\beta_1-1$ 
& 1 & 0 & -1 & $k-1$
\\[.15in]
\begin{picture}(70,8)(-5,-4)\setlength{\unitlength}{2.5pt}\thicklines
\ooo
\qbezier(0.8,0.6)(10,5)(19.2,0.6)
\put(10,3.5){\makebox(0,0){$\scriptstyle 1$}}
\end{picture}
& 1 & 2 & 1 & 1 & 1 & (1,1) & (1,1) & 1 & $\beta_1+\beta_2$ & $\beta_1+\beta_2$
& 2 & 1 & 0 & $2k+1$
\\
&&&&&&&&&&&&&&\\[-.1in]
\hline \hline
&&&&&&&&&&&&&&\\[-.1in]
\begin{picture}(70,8)(-5,-4)\setlength{\unitlength}{2.5pt}\thicklines
\oo
\put(5,2){\makebox(0,0){$\scriptstyle 3$}}
\Eeee
\end{picture}
& 2 & 1 & 9 & 0 & 0 & (3) & (2) & 2 & $\beta_1-2$ & $\beta_1-2$ 
& 1 & 0 & -2 & $k-2$
\\[.15in]
\begin{picture}(70,8)(-5,-4)\setlength{\unitlength}{2.5pt}\thicklines
\oo
\put(5,3.5){\makebox(0,0){$\scriptstyle 2$}}
\put(5,-3.5){\makebox(0,0){$\scriptstyle 2$}}
\qbezier(0.8,0.6)(5,2)(9.2,0.6)
\qbezier(0.8,-0.6)(5,-2)(9.2,-0.6)
\end{picture}
& 2 & 1 & 16 & 0 & 0 & (4) & (2) & 2 & $\binom{\beta_1-2}{2}$ & $-\frac{3}{2}\beta_1+\frac{5}{2}$ 
& -$\frac{3}{2}$ & 0 & $\frac{5}{2}$ & $-\frac{3}{2}k+\frac{5}{2}$
\\[.15in]
\begin{picture}(70,8)(-5,-4)\setlength{\unitlength}{2.5pt}\thicklines
\ooo
\qbezier(0.8,0.6)(10,4)(19.2,0.6)
\qbezier(0.8,-0.6)(10,-4)(19.2,-0.6)
\put(10,3.5){\makebox(0,0){$\scriptstyle 1$}}
\put(10,-3.5){\makebox(0,0){$\scriptstyle 1$}}
\end{picture}
& 2 & 2 & 1 & 1 & 1 & (2,2) & (2,2) & 2 & $\binom{\beta_1+\beta_2-1}{2}$ & $\scriptstyle -\frac{3}{2}\beta_1-\frac{3}{2}\beta_2+1$ 
& -3 & -$\frac{3}{2}$ & 1 & $-3k - \frac{1}{2}$
\\[.15in]
\begin{picture}(70,8)(-5,-4)\setlength{\unitlength}{2.5pt}\thicklines
\ooo
\qbezier(0.8,0.6)(10,4)(19.2,0.6)
\put(5,-2){\makebox(0,0){$\scriptstyle 2$}}
\Eeee
\put(10,3.5){\makebox(0,0){$\scriptstyle 1$}}
\end{picture}
& 2 & 2 & 4 & 0 & 1 & (3,1) & (2,1) & 2 & $\scriptstyle (\beta_1-2)(\beta_1+\beta_2+1)$ & $\scriptstyle -2\beta_1-\beta_2+2$
& -3 & -1 & 2 & $-3k+1$
\\[.15in]
\begin{picture}(70,8)(-5,-4)\setlength{\unitlength}{2.5pt}\thicklines
\ooo
\qbezier(0.8,0.6)(10,4)(19.2,0.6)
\put(15,-2){\makebox(0,0){$\scriptstyle 2$}}
\eEee
\put(10,3.5){\makebox(0,0){$\scriptstyle 1$}}
\end{picture}
& 2 & 2 & 4 & 1 & 0 & (1,3) & (1,2) & 1 & $\scriptstyle (\beta_2-2)(\beta_1+\beta_2+1)$ & $\scriptstyle -\beta_1-2\beta_2+2$
& -3 & -2 & 2 & $-3k$
\\[.15in]
\begin{picture}(70,8)(-5,-4)\setlength{\unitlength}{2pt}\thicklines
\oooo
\qbezier(0.8,0.6)(15,6)(29.2,0.6)
\put(15,4.5){\makebox(0,0){$\scriptstyle 1$}}
\end{picture}
& 2 & 3 & 1 & 1 & 1 & $\scriptstyle (1,1,1)$ & $\scriptstyle (1,1,1)$ & 1 & $\scriptstyle \beta_1+\beta_2+\beta_3$ & $\scriptstyle \beta_1+\beta_2+\beta_3$ 
& 3 & 3 & 0 & $3k+3$
\\[.15in]
\begin{picture}(70,8)(-5,-4)\setlength{\unitlength}{2pt}\thicklines
\oooo
\qbezier(0.8,0.6)(10,5)(19.2,0.6)
\qbezier(10.8,0.6)(20,5)(29.2,0.6)
\put(9,4){\makebox(0,0){$\scriptstyle 1$}}
\put(21,4){\makebox(0,0){$\scriptstyle 1$}}
\end{picture}
& 2 & 3 & 1 & 1 & 1 & $\scriptstyle (1,2,1)$ & $\scriptstyle (1,2,1)$ & 1 & $\begin{matrix}\scriptstyle \beta_1\beta_3 + \\ \scriptstyle (\beta_2-1)(\beta_1+\beta_2+\beta_3)\end{matrix}$ & $\scriptstyle -\beta_1-\beta_2-\beta_3$
& -3 & -3 & 0 & $-3k-3$
\\[-.05in]
&&&&&&&&&&&&&&\\[-.1in]
\end{tabular}
\bigskip
\bigskip
\caption{The templates with $\delta(\Gamma) \le 2$.}
\label{tab:templates}
\end{sidewaystable}

Below we first give an example of computing $Q_\Gamma(d),$ and then compute $Q_\delta(d)$ for $\delta=1$ and $2.$

\begin{ex}\label{ex:QGamma}
	Let $\Gamma$ be the second template of cogenus $2$ in Table \ref{tab:templates}. Note that $\Gamma$ is the same long-edge graph $G$ we considered in Examples \ref{ex:funcP} and \ref{ex:funcPhi}. When $d$ is sufficiently large, $Q_\Gamma(d) = Q^{d, \Gamma}$ is defined by \eqref{equ:QdGamma}. Since $l(\Gamma) = 1,$ we have \[\Phi_{\v(k, l(\Gamma))}(\Gamma) = \left.\Phi_\bbeta(\Gamma)\right|_{\beta_1=k}.\]
  Hence, using the results for $\Phi_\bbeta\left( \Gamma \right)$ given in Example \ref{ex:funcPhi} and the data in Table \ref{tab:templates}, we get
  \[ Q_\Gamma(d) = 16 \cdot \left(0 + \sum_{k=2}^{d+0-1} -\frac{1}{2}(3k-5) \right) = -12d^2 + 52d - 56.\]
\end{ex}

\begin{ex}
  Let $\delta = 1.$ There are two templates of cogenus $1$ as listed in Table \ref{tab:templates}. We denote them by $\Gamma_1$ and $\Gamma_2$ in the order as listed in the Table. We compute $Q_{\Gamma_1}(d)$ and $Q_{\Gamma_2}(d)$ similarly as shown in Example \ref{ex:QGamma} but omit details of how we obtain $\Phi_{\v(k, l(\Gamma_i))}(\Gamma_i)$ for $k < k_{\min}(\Gamma_i).$

\begin{align*}
  Q_{\Gamma_1}(d) =& 4 \cdot \left( \sum_{k=1}^{d+0-1} (k-1)  \right)= 2d^2-6d+4, \\
  Q_{\Gamma_2}(d) 
  =& 1 \cdot \left(\sum_{k=1}^{d+1-2} (2k+1) \right) = d^2-1. 
\end{align*}
Therefore,
\[  Q_1(d) = Q_{\Gamma_1}(d)+Q_{\Gamma_2}(d) 
  = 3d^2 - 6d +3 = 3(d-1)^2.\]
\end{ex}

\begin{ex}
Let $\delta = 2.$ There are seven templates of cogenus $2$ as listed in Table \ref{tab:templates}. We denote them by $\Gamma_1, \Gamma_2, \dots, \Gamma_6,$ and $\Gamma_7$ in the order as listed in the Table.
As in the previous example, we compute $Q_{\Gamma_i}(d)$ without details of how we obtain $\Phi_{\v(k, l(\Gamma_i))}(\Gamma_i)$ for $k < k_{\min}(\Gamma_i).$

\begin{align*}
  Q_{\Gamma_1}(d) =& 9 \cdot \left(0  + \sum_{k=2}^{d+0-1} (k-2)\right) = \frac{1}{2}(9d^2-45d+54),\\
Q_{\Gamma_2}(d)  =& 16 \cdot \left(0 + \sum_{k=2}^{d+0-1} -\frac{1}{2}(3k-5) \right) = -12d^2 + 52d - 56, \\
Q_{\Gamma_3}(d)  =& 1 \cdot \left(- \frac{9}{2} + \sum_{k=2}^{d+1-2} -\frac{1}{2}(6k+1) \right) = -\frac{1}{2}(3d^2-2d+1), \\
Q_{\Gamma_4}(d)  =& 4 \cdot \left( 0 + \sum_{k=2}^{d+1-2} (-3k+1) \right) = -6d^2+10d+4, \\
Q_{\Gamma_5}(d) =& 4 \cdot \left( \sum_{k=1}^{d+0-2} (-3k) \right) = -6d^2+18d-12. 
\end{align*}
Finally, by comparing formulas
\[ Q_{\Gamma_6}(d)  = 1 \cdot \left(  \sum_{k=1}^{d+1-3} (3k+3) \right) \quad \text{ and } \quad Q_{\Gamma_7}(d)  = 1 \cdot \left(  \sum_{k=1}^{d+1-3} (-3k-3) \right), \]
we see that $Q_{\Gamma_6}(d) + Q_{\Gamma_7}(d)=0$ without calculation. Therefore,
\[ Q_2(d) = Q_{\Gamma_1}(d) + Q_{\Gamma_2}(d) + Q_{\Gamma_3}(d) + Q_{\Gamma_4}(d) + Q_{\Gamma_5}(d) =-\frac{1}{2}(42d^2-117d+75). \]
\end{ex}

\section{Determining $A_1(t)$ and $A_2(t)$}\label{sec:determine}

In this section, we show how our formula \eqref{equ:QdGamma} for $Q^{d,\Gamma}$ leads to combinatorial formulas for $A_1(t)$ and $A_2(t)$ of Proposition \ref{equ:T2N}. 
For brevity, throughout this section, unless otherwise specified, we always assume $\delta$ is fixed and use $\displaystyle \sum_{\Gamma}$ to denote the summation of all the templates of cogneus $\delta.$ For any power series $F(x),$ we denote by $[x^\delta] F(x)$ the coefficient of $x^\delta$ in $F(x).$

Applying \eqref{equ:T2N} to Proposition \ref{prop:logT}, we get
 \begin{equation}\label{equ:genQdeltad}  \sum_{\delta \ge 1} Q_\delta(d) t^\delta = \log \left( \sum_{\delta \ge 0} N_\delta(d) t^\delta \right) = d^2 A_1(t) - 3d A_2(t) + 9 A_3(t) + 3 A_4(t).
 \end{equation}
Therefore, it suffices to figure out the coefficients of $d^2$ and $d$ in $Q_\delta(d)$. It turns out that we only care about three linear combinations of the coefficients in $\Phi(\Gamma, \bbeta)$ for answering this question.

\begin{defn}
Suppose $\Gamma$ is a template of length $\ell,$ and $\Phi(\Gamma, \bbeta) = a_0 + \sum_{j=1}^{\ell} a_j \beta_j.$ We define
\[ \zeta^i(\Gamma) := \sum_{j=1}^\ell \binom{j-1}{i} a_j \quad \text{ for $i=0,1$,}   \qquad \text{ and } \qquad \eta_0(\Gamma) := a_0.\]
\end{defn}

See Table \ref{tab:templates} for $\zeta^0(\Gamma)$, $\zeta^1(\Gamma)$ and $\eta_0(\Gamma)$ of templates of cognues $\le 2.$

\begin{prop}\label{prop:coeff}
For any $\delta \ge 1,$ we have 
\begin{align}
  \left[ d^2 \right] Q_\delta(d) =& \frac{1}{2} \sum_\Gamma \mu(\Gamma) \zeta^0(\Gamma), \nonumber \\
\left[ d \right] Q_\delta(d)=& \label{equ:coeffd0}
\sum_\Gamma \mu(\Gamma) \left( \frac{1}{2}(2 \epsilon_1(\Gamma) - 2 l(\Gamma) +1) \zeta^0(\Gamma) + \zeta^1(\Gamma) + \eta_0(\Gamma) \right).
\end{align}
\end{prop}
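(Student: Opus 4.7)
The strategy is to compute $[d^2]Q_\delta(d)$ and $[d]Q_\delta(d)$ template-by-template using the closed-form expression for $Q_\Gamma(d)$ provided by \eqref{equ:QdGamma} and then sum over $\Gamma$. Since the first summation $\sum_{k=1}^{k_{\min}(\Gamma)-1} \Phi_{\v(k,l(\Gamma))}(\Gamma)$ in \eqref{equ:QdGamma} is independent of $d$, it only affects the constant term and can be ignored. The remaining term is $\mu(\Gamma)\cdot F(d+\epsilon_1(\Gamma)-l(\Gamma))$, so it suffices to determine the leading two coefficients of the polynomial $F(y)$ in $y$ for each $\Gamma$, then change variables.

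The first concrete step is to convert the linear function $\Phi(\Gamma,\bbeta)=a_0+\sum_{j=1}^{l(\Gamma)} a_j\beta_j$ into an explicit linear function in $k$ via the specialization \eqref{equ:Phibeta2Phik} (with $\bbeta=\v(k,l(\Gamma))$, i.e.\ $\beta_j=k+j-1$). A one-line computation gives
\[
\Phi_{\v(k,l(\Gamma))}(\Gamma)=\zeta^0(\Gamma)\,k+\bigl(\zeta^1(\Gamma)+\eta_0(\Gamma)\bigr)
\]
for all $k\ge k_{\min}(\Gamma)$, using the definitions of $\zeta^0,\zeta^1,\eta_0$ (where $\sum_j\binom{j-1}{0}a_j=\sum_j a_j$ and $\sum_j\binom{j-1}{1}a_j=\sum_j(j-1)a_j$).

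Next, I would apply Faulhaber's formula to sum this linear function from $k_{\min}(\Gamma)$ to $y$. Since $F$ is a quadratic polynomial vanishing at $y=k_{\min}(\Gamma)-1$, the leading and sub-leading coefficients can be read off directly:
\[
F(y)=\tfrac{\zeta^0(\Gamma)}{2}\,y^2+\Bigl(\tfrac{\zeta^0(\Gamma)}{2}+\zeta^1(\Gamma)+\eta_0(\Gamma)\Bigr)\,y+\text{const}.
\]
Substituting $y=d+\epsilon_1(\Gamma)-l(\Gamma)$ and expanding, the coefficient of $d^2$ in $Q_\Gamma(d)$ equals $\mu(\Gamma)\zeta^0(\Gamma)/2$, while the coefficient of $d$ equals
\[
\mu(\Gamma)\Bigl(\zeta^0(\Gamma)\bigl(\epsilon_1(\Gamma)-l(\Gamma)\bigr)+\tfrac{\zeta^0(\Gamma)}{2}+\zeta^1(\Gamma)+\eta_0(\Gamma)\Bigr)=\mu(\Gamma)\Bigl(\tfrac{1}{2}\bigl(2\epsilon_1(\Gamma)-2l(\Gamma)+1\bigr)\zeta^0(\Gamma)+\zeta^1(\Gamma)+\eta_0(\Gamma)\Bigr),
\]
which is exactly the summand claimed in the proposition.

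Finally, summing the two coefficients over all templates of cogenus $\delta$ and invoking \eqref{equ:QdeltaQGamma} yields the two displayed formulas. There is no real obstacle here; the argument is a routine evaluation once Corollary \ref{cor:formQdGamma}, Lemma \ref{lem:linearink}, and the explicit expression for $F(y)$ are in hand. The only minor care needed is verifying that the constant-in-$d$ piece $\sum_{k=1}^{k_{\min}(\Gamma)-1}\Phi_{\v(k,l(\Gamma))}(\Gamma)$ truly contributes nothing to $[d],[d^2]$ (immediate, as its summation limits do not depend on $d$), and that the identification of $\zeta^0,\zeta^1,\eta_0$ with the appropriate linear combinations of the $a_j$'s matches the definitions.
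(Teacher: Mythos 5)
Your proposal is correct and takes essentially the same approach as the paper: both evaluate $\Phi_{\v(k,l(\Gamma))}(\Gamma)=\zeta^0(\Gamma)k+\zeta^1(\Gamma)+\eta_0(\Gamma)$ via \eqref{equ:Phibeta2Phik}, sum this linear function up to $k=d+\epsilon_1(\Gamma)-l(\Gamma)$, observe that the $k<k_{\min}(\Gamma)$ correction contributes only to the constant term, and read off the $d^2$ and $d$ coefficients after the change of variable. The paper organizes the bookkeeping slightly differently (extending the sum to start at $k=1$ and subtracting a compensating constant) rather than working through $F(y)$, but the underlying computation is identical.
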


\begin{proof}
  Suppose $\Gamma$ is a template and $\Phi(\Gamma, \bbeta) = a_0 + \sum_{j=1}^{\ell} a_j \beta_j.$ Then for $k \ge k_{\min}(\Gamma),$ applying \eqref{equ:Phibeta2Phik}, we get 
\[\Phi_{\v(k, l(\Gamma))}\left( \Gamma \right) = \left(\sum_{j=1}^\ell a_j\right) k + \sum_{j=1}^\ell (j-1) a_j + a_0 = \zeta^0(\Gamma) k + \zeta^1(\Gamma) + \eta_0(\Gamma). \]
Plugging in \eqref{equ:QdGamma}, we get 
\begin{align*} Q^{d, \Gamma} =& \mu(\Gamma) \left(\sum_{k =1}^{k_{\min}(\Gamma)-1} \Phi_{\v(k, l(\Gamma))}\left( \Gamma \right) + \sum_{k = k_{\min}(\Gamma)}^{d + \epsilon_1(\Gamma) - l(\Gamma)} \left(\zeta^0(\Gamma) k + \zeta^1(\Gamma) + \eta_0(\Gamma)\right) \right) \\
=& \mu(\Gamma) \left(\sum_{k =1}^{k_{\min}(\Gamma)-1} \left(\Phi_{\v(k, l(\Gamma))}\left( \Gamma \right) -(\zeta^0(\Gamma) k + \zeta^1(\Gamma) + \eta_0(\Gamma)) \right) + \sum_{k = 1}^{d + \epsilon_1(\Gamma) - l(\Gamma)} (\zeta^0(\Gamma) k + \zeta^1(\Gamma) + \eta_0(\Gamma)) \right) 
\end{align*}
Since for $d \ge k_{\min}(\Gamma) + l(\Gamma) -\epsilon_1(\Gamma)-1,$ the polynomial $Q_\Gamma(d)$ is computed by the above formula, the first summation of which is a constant and the second summation of which gives a quadratic polynomial of $d,$ we see that the coefficients of $d$ ($d^2$, respectively) of $Q_\Gamma(d)$ is the same as the coefficients of $d$ ($d^2$, respectively) of
\begin{align*}
& \mu(\Gamma) \sum_{k = 1}^{d + \epsilon_1(\Gamma) - l(\Gamma)} (\zeta^0(\Gamma) k + \zeta^1(\Gamma) + \eta_0(\Gamma)) \\
=& \mu(\Gamma) \left( \zeta^0(\Gamma) \frac{(d + \epsilon_1(\Gamma) - l(\Gamma))(d + \epsilon_1(\Gamma) - l(\Gamma)+1)}{2} + (\zeta^1(\Gamma) + \eta_0(\Gamma))(d + \epsilon_1(\Gamma) - l(\Gamma)) \right).
\end{align*}
Then the conclusion follows from \eqref{equ:QdeltaQGamma} and rearranging the terms in the above formula.
\end{proof}

We can simplify the formula \eqref{equ:coeffd0} for $[d] Q_\delta(d)$ using the concept of {\it conjugation}.

\begin{defn}
The {\it conjugate} of a template $\Gamma$, denoted by $\nBar{\Gamma},$ is the template obtained by flipping $\Gamma$ and renaming the vertices accordingly. E.g., in Table \ref{tab:templates}, the $4$th and $5$th templates of cogenus $2$ are conjuate to each other, and the rest of the templates are self-conjugate.
\end{defn}

\begin{lem}\label{lem:conjugate}
Suppose $\Gamma$ is a template. Then
\begin{align} \mu(\nBar{\Gamma}) = \mu(\Gamma) \qquad& l(\nBar{\Gamma}) = l(\Gamma) \qquad \epsilon_0(\nBar{\Gamma}) = \epsilon_1(\Gamma) \qquad \epsilon_1(\nBar{\Gamma}) = \epsilon_0(\Gamma) \label{equ:cjg1} \\
  \zeta^0(\nBar{\Gamma}) = \zeta^0(\Gamma) \qquad& \eta_0(\nBar{\Gamma}) = \eta_0(\Gamma) \qquad \zeta^1(\Gamma)+\zeta^1(\nBar{\Gamma}) = (l(\Gamma)-1) \zeta^0(\Gamma). \label{equ:cjg2}
\end{align}
\end{lem}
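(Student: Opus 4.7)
The identities in \eqref{equ:cjg1} follow directly from the definition of the conjugate: flipping is a bijection on edges that preserves weights and edge lengths, so both $\mu(\Gamma)$ and $l(\Gamma)$ are invariant, while the roles of the leftmost and rightmost non-isolated vertices are swapped, which in particular interchanges $\epsilon_0$ and $\epsilon_1$.

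For \eqref{equ:cjg2}, the plan is to first establish a conjugation symmetry at the level of the basic enumerative functions. Writing $\ell = l(\Gamma)$ and, for any $\bbeta = (\beta_1, \ldots, \beta_\ell)$, setting $\bar{\bbeta} := (\beta_\ell, \beta_{\ell-1}, \ldots, \beta_1)$, I will show
$$P_{\bbeta}(\nBar{\Gamma}) = P_{\bar{\bbeta}}(\Gamma), \qquad \text{and hence} \qquad \Phi_{\bbeta}(\nBar{\Gamma}) = \Phi_{\bar{\bbeta}}(\Gamma).$$
The key observation is that an edge $\{i,k\}$ of $\Gamma$ with $i < k$ corresponds to the edge $\{\ell-k, \ell-i\}$ of $\nBar{\Gamma}$, from which a direct count gives $\lambda_j(\nBar{\Gamma}) = \lambda_{\ell+1-j}(\Gamma)$. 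Hence $\nBar{\Gamma}$ is $\bbeta$-allowable iff $\Gamma$ is $\bar{\bbeta}$-allowable, and the flipping map on the added unweighted edges extends this to a weight-preserving bijection between $\ext_{\bbeta}(\nBar{\Gamma})$ and $\ext_{\bar{\bbeta}}(\Gamma)$, which in turn descends to a bijection on extended orderings modulo automorphism equivalence. The $\Phi$-identity then follows from \eqref{equ:qG}, since flipping also gives a bijection on partitions with $\overline{(G_1,\ldots,G_i)} = (\nBar{G}_1, \ldots, \nBar{G}_i)$, and multiplicity is multiplicative along partitions in the obvious way.

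Writing $\Phi(\Gamma, \bbeta) = a_0 + \sum_{j=1}^\ell a_j \beta_j$, the symmetry just established together with the uniqueness of the linear representation from Corollary \ref{cor:maintemplate} gives
$$\Phi(\nBar{\Gamma}, \bbeta) \;=\; \Phi(\Gamma, \bar{\bbeta}) \;=\; a_0 + \sum_{j=1}^\ell a_{\ell+1-j}\, \beta_j,$$
so the coefficients $\bar{a}_j$ of $\Phi(\nBar{\Gamma}, \bbeta)$ satisfy $\bar{a}_0 = a_0$ and $\bar{a}_j = a_{\ell+1-j}$. This immediately yields $\eta_0(\nBar{\Gamma}) = \eta_0(\Gamma)$ and $\zeta^0(\nBar{\Gamma}) = \sum_j a_{\ell+1-j} = \zeta^0(\Gamma)$. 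For the last identity, the substitution $k = \ell+1-j$ gives
$$\zeta^1(\nBar{\Gamma}) \;=\; \sum_{j=1}^\ell (j-1)\, a_{\ell+1-j} \;=\; \sum_{k=1}^\ell (\ell - k)\, a_k,$$
so that $\zeta^1(\Gamma) + \zeta^1(\nBar{\Gamma}) = \sum_{k=1}^\ell (\ell - 1)\, a_k = (\ell-1)\, \zeta^0(\Gamma)$, as required.

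The only nontrivial point is verifying that the flipping bijection on $\bbeta$-extended orderings genuinely descends to the equivalence by automorphism. Since the equivalence is defined purely in terms of vertex incidences and edge weights, both of which flipping preserves, this is essentially bookkeeping, but it is the one place where some care is required; everything else reduces to the re-indexing computation and the uniqueness of the linear representation from Corollary \ref{cor:maintemplate}.
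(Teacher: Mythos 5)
Your proof is correct and follows essentially the same route as the paper: the identities in \eqref{equ:cjg1} are observed directly, and \eqref{equ:cjg2} is derived from the coefficient-reversal symmetry $\Phi(\nBar{\Gamma}, \bbeta) = a_0 + \sum_j a_{\ell+1-j}\beta_j$, which you carefully justify (via $P_\bbeta(\nBar\Gamma) = P_{\bar\bbeta}(\Gamma)$ and the uniqueness of the linear representation from Corollary \ref{cor:maintemplate}), whereas the paper states it as a direct observation. Your re-indexing computations for $\zeta^0$, $\zeta^1$, and $\eta_0$ are exactly the ones the paper leaves implicit and are all correct.
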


\begin{proof}
  Equalities in \eqref{equ:cjg1} follows directly from the definition, and assuming $\l(\Gamma)=\ell$, equalities in \eqref{equ:cjg2} follows from the observation that,
  \[ \Phi(\Gamma, \bbeta) = a_0 + \sum_{j=1}^{\ell} a_j \beta_j \quad \Longleftrightarrow \quad \Phi(\nBar{\Gamma}, \bbeta) = a_0 + \sum_{j=1}^{\ell} a_{\ell+1-j} \beta_j. \]
\end{proof}

\begin{cor}\label{cor:coeff}
For any $\delta \ge 1,$ we have 
\begin{equation}\label{equ:coeffd} \left[ d \right] Q_\delta(d) = \sum_\Gamma \mu(\Gamma) \left( -\frac{1}{2}( l(\Gamma)- \epsilon_0(\Gamma) - \epsilon_1(\Gamma)) \zeta^0(\Gamma) + \eta_0(\Gamma)\right).
\end{equation}
\end{cor}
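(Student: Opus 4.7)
The plan is to derive this from Proposition \ref{prop:coeff} by symmetrizing the sum under the involution $\Gamma \mapsto \nBar{\Gamma}$ and then applying Lemma \ref{lem:conjugate}. Since conjugation preserves the cogenus, it is a bijection (in fact an involution) on the set of templates of cogenus $\delta$. Consequently, for any function $f$ on templates, we have
\[ \sum_\Gamma f(\Gamma) \;=\; \tfrac{1}{2}\sum_\Gamma\left(f(\Gamma)+f(\nBar{\Gamma})\right),\]
where the sums run over all templates of cogenus $\delta$.

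First I would apply this symmetrization to the formula \eqref{equ:coeffd0} from Proposition \ref{prop:coeff}. For each of the ingredients $\mu(\Gamma)$, $l(\Gamma)$, $\zeta^0(\Gamma)$, $\eta_0(\Gamma)$ the value is invariant under conjugation by \eqref{equ:cjg1}--\eqref{equ:cjg2}, so these factors just get repeated. The only non-trivial pieces are $\epsilon_1(\Gamma)$, which becomes $\epsilon_0(\Gamma)$ under conjugation, and $\zeta^1(\Gamma)$, whose average satisfies $\zeta^1(\Gamma)+\zeta^1(\nBar{\Gamma}) = (l(\Gamma)-1)\zeta^0(\Gamma)$ by the last identity in \eqref{equ:cjg2}. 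Substituting these into the symmetrized sum replaces $2\epsilon_1(\Gamma)$ by $\epsilon_0(\Gamma)+\epsilon_1(\Gamma)$ and replaces $2\zeta^1(\Gamma)$ by $(l(\Gamma)-1)\zeta^0(\Gamma)$, while $\eta_0(\Gamma)$ is simply doubled.

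Collecting the coefficient of $\zeta^0(\Gamma)$ inside the symmetrized sum then yields
\[ \tfrac{1}{2}\bigl(2\epsilon_0(\Gamma)+2\epsilon_1(\Gamma)-4l(\Gamma)+2\bigr)+(l(\Gamma)-1) \;=\; \epsilon_0(\Gamma)+\epsilon_1(\Gamma)-l(\Gamma),\]
which is exactly $-(l(\Gamma)-\epsilon_0(\Gamma)-\epsilon_1(\Gamma))$. Dividing out the overall factor of $\tfrac{1}{2}$ from symmetrization recovers the desired formula \eqref{equ:coeffd}. The argument involves no hard input beyond the conjugation identities in Lemma \ref{lem:conjugate} and the formula from Proposition \ref{prop:coeff}; the only point requiring a moment's care is that the $\zeta^1$ term, which is \emph{not} conjugation-invariant, gets averaged into an expression in $\zeta^0$ and $l(\Gamma)$, which is what ultimately produces the clean form of the coefficient.
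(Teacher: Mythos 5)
Your proposal is correct and follows essentially the same route as the paper: symmetrize the sum from Proposition \ref{prop:coeff} under the conjugation involution $\Gamma \mapsto \nBar{\Gamma}$ and apply the identities of Lemma \ref{lem:conjugate}, with the key point being that the non-invariant $\zeta^1$ term averages to $(l(\Gamma)-1)\zeta^0(\Gamma)/2$. Your arithmetic collecting the coefficient of $\zeta^0(\Gamma)$ checks out, and the final cleanup matches \eqref{equ:coeffd}.
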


\begin{proof}
  It is clear that conjugation defines an automorphism on the set of templates of a fixed cogenus. Hence, we can rewrite the formula \eqref{equ:coeffd0} for the coefficient of $d$ in $Q_\delta(d)$ provided in Proposition \ref{prop:coeff}:  \begin{align*}
     & \sum_\Gamma \mu(\Gamma) \left( \frac{1}{2}(2 \epsilon_1(\Gamma) - 2 l(\Gamma) +1) \zeta^0(\Gamma) + \zeta^1(\Gamma) + \eta_0(\Gamma) \right) \\
     =& \frac{1}{2} \sum_\Gamma \mu(\Gamma)  
     \begin{pmatrix}\frac{1}{2}(2 \epsilon_1(\Gamma) - 2 l(\Gamma) +1) \zeta^0(\Gamma) + \zeta^1(\Gamma) + \eta_0(\Gamma)  \\
       + \frac{1}{2}(2 \epsilon_1(\nBar{\Gamma}) - 2 l(\nBar{\Gamma}) +1) \zeta^0(\nBar{\Gamma}) + \zeta^1(\nBar{\Gamma}) + \eta_0(\nBar{\Gamma})
     \end{pmatrix}.
  \end{align*}
  Then the conclusion follows from applying Lemma \ref{lem:conjugate} to the above formula.
\end{proof}

\begin{rem}\label{rem:coeffd}
  The formula for the coefficient of $d$ in $Q_\delta(d)$ is simplified further  in \cite{toricsurface}. In \cite[Lemma 7.1]{toricsurface}, the authors show that
  \[ -\sum_\Gamma \mu(\Gamma) (l(\Gamma)- \epsilon_0(\Gamma) - \epsilon_1(\Gamma)) \zeta^0(\Gamma) = \sum_\Gamma \mu(\Gamma) \eta_0(\Gamma).\]
  As a result, the formula \eqref{equ:coeffd} for the coefficient of $d$ in $Q_\delta(d)$ can be simplified further to
  \[ \left[ d \right] Q_\delta(d) = \frac{3}{2} \sum_\Gamma \mu(\Gamma) \eta_0(\Gamma).\]
\end{rem}

Comparing Proposition \ref{prop:coeff} and Corollary \ref{cor:coeff} with \eqref{equ:genQdeltad}, we find combinatorial formulas for $A_1(t)$ and $A_2(t)$ using templates.
\begin{cor}\label{cor:A1A2}
  The power series $A_1(t)$ and $A_2(t)$ of Proposition \ref{prop:logT} are given by
\begin{align*}
  A_1(t) =& \frac{1}{2} \sum_{\delta \ge 1} \left(\sum_{\Gamma: \delta(\Gamma) = \delta } \mu(\Gamma) \zeta^0(\Gamma) \right) t^\delta \\
  A_2(t) =& \frac{1}{3} \sum_{\delta \ge 1} \left(\sum_{\Gamma: \delta(\Gamma) = \delta } \mu(\Gamma) \left( \frac{1}{2}( l(\Gamma)- \epsilon_0(\Gamma) - \epsilon_1(\Gamma)) \zeta^0(\Gamma) - \eta_0(\Gamma)\right) \right) t^\delta
\end{align*}
\end{cor}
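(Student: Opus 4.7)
\medskip

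\noindent\textbf{Proof proposal.} The plan is to simply match coefficients of $d$ and $d^2$ on the two sides of equation \eqref{equ:genQdeltad}, using the explicit formulas already obtained for these coefficients in terms of templates.

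First, I would observe that from \eqref{equ:genQdeltad},
\[ \sum_{\delta \ge 1} Q_\delta(d) t^\delta = d^2\, A_1(t) - 3 d\, A_2(t) + 9 A_3(t) + 3 A_4(t), \]
so for each fixed $\delta \ge 1$, extracting the coefficient of $t^\delta$ gives
\[ Q_\delta(d) = d^2\, [t^\delta] A_1(t) \;-\; 3 d\, [t^\delta] A_2(t) \;+\; 9\, [t^\delta] A_3(t) \;+\; 3\, [t^\delta] A_4(t). \]
Since the right-hand side is a polynomial in $d$ of degree at most $2$, reading off the coefficient of $d^2$ gives $[t^\delta] A_1(t) = [d^2] Q_\delta(d)$, and reading off the coefficient of $d$ gives $-3\, [t^\delta] A_2(t) = [d] Q_\delta(d)$.

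Next, I would substitute the combinatorial expressions for $[d^2] Q_\delta(d)$ and $[d] Q_\delta(d)$. Proposition \ref{prop:coeff} yields
\[ [t^\delta] A_1(t) \;=\; [d^2] Q_\delta(d) \;=\; \tfrac{1}{2} \sum_{\Gamma : \delta(\Gamma) = \delta} \mu(\Gamma)\, \zeta^0(\Gamma), \]
which, summing over $\delta \ge 1$, gives the stated formula for $A_1(t)$. For $A_2(t)$, Corollary \ref{cor:coeff} provides
\[ -3\, [t^\delta] A_2(t) \;=\; [d] Q_\delta(d) \;=\; \sum_{\Gamma : \delta(\Gamma) = \delta} \mu(\Gamma) \left( -\tfrac{1}{2}(l(\Gamma) - \epsilon_0(\Gamma) - \epsilon_1(\Gamma))\, \zeta^0(\Gamma) + \eta_0(\Gamma) \right), \]
so dividing by $-3$ and summing over $\delta \ge 1$ yields the stated formula for $A_2(t)$.

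There is no genuine obstacle here: all the combinatorial work has already been done in Proposition \ref{prop:coeff} and Corollary \ref{cor:coeff}, and the only remaining step is the linear-algebraic one of comparing coefficients. The one point worth a brief sanity check is that equation \eqref{equ:genQdeltad} holds as an identity of formal power series in $t$ whose coefficients are \emph{polynomials} in $d$; since $Q_\delta(d)$ agrees with the polynomial $[t^\delta]$ of the right-hand side for all sufficiently large $d$, equality of polynomials follows, and the extraction of $[d^2]$ and $[d]$ coefficients is legitimate.
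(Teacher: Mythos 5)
Your proposal is correct and is essentially the argument the paper itself gives (the paper simply says "Comparing Proposition \ref{prop:coeff} and Corollary \ref{cor:coeff} with \eqref{equ:genQdeltad}" and states the result): both extract the $d^2$ and $d$ coefficients from \eqref{equ:genQdeltad} and substitute the template formulas. Your added remark on why comparing coefficients of a polynomial identity is legitimate is a sound and harmless elaboration, not a deviation.
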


\begin{rem}\label{rem:A2}
  If we apply the result mentioned in Remark \ref{rem:coeffd}, we obtain a simpler formula for the power series $A_2(t)$:
  \[ A_2(t) =- \frac{1}{2} \sum_{\delta \ge 1} \left(\sum_{\Gamma: \delta(\Gamma) = \delta} \mu(\Gamma) \eta_0(\Gamma) \right) t^\delta,\]
  where only involves the constant term $\eta_0(\Gamma)$ in $\Phi(\Gamma, \bbeta).$
\end{rem}

\begin{ex}

	Applying Corollary \ref{cor:A1A2} and using the data for $\zeta^0(\Gamma)$ in Table \ref{tab:templates}, we obtain the two lowest degree terms in $A_1(t)$:
\begin{align*}
  \left[ t \right]A_1(t) = & \frac{1}{2}\left(  4 \cdot 1 + 1 \cdot 2  \right) = 3, \\
  \left[ t^2 \right]A_1(t) = &  \frac{1}{2}\left( 9 \cdot 1 + 16 \cdot \left( -\frac{3}{2} \right) + 1 \cdot (-3) + 4 \cdot (-3) + 4 \cdot (-3) + 1 \cdot 3 + 1 \cdot (-3) \right) = -21.
\end{align*}
Thus, 
\[ A_1(t) = 3t - 21 t^2 + \cdots \]

Similarly, applying Corollary \ref{cor:A1A2} and/or Remark \ref{rem:A2}, we can obtain the two lowest degree terms in $A_2(t)$. (Detailed calculation is omitted.)
\[ \left[ t \right]A_2(t) = 2, \qquad \left[ t^2 \right]A_2(t) = -\frac{39}{2}.\]
Hence,
\[ A_2(t) = 2t - \frac{39}{2} t^2 + \cdots \]
\end{ex}


\section{Reformulation of the results}\label{sec:reformulate}
In Section \ref{sec:computeQ}, we proved Corollary \ref{cor:quadratic} using Corollary \ref{cor:shifttemp} and Theorem \ref{thm:main0}. Note that Corollary \ref{cor:shifttemp} is a consequence of Lemma \ref{lem:vanish0}.
The goal of this section is to prove Lemma \ref{lem:vanish0} and to finish the first reduction step in proving our main result Theorem \ref{thm:main0}.

There are two parts of this section. In the first part, we introduce {\it $\tau$-graphs}, which generalize long-edge graphs. We then extend definitions and results of long-edge graphs to $\tau$-graphs, and restate Lemma \ref{lem:vanish0} and Theorem \ref{thm:main0} in the setting of $\tau$-graphs. The description of $\tau$-graphs enables us to consider generating functions of functions $\Phi_\bbeta^s$ and $\Phi_\bbeta$ in certain forms, which will be used in the second part of this section to prove Lemma \ref{lem:vanish0} and reduce Theorem \ref{thm:main0} to a result on generating functions (Theorem \ref{thm:main1}).


We start by giving more notation that will be useful for the rest of the paer. We have been using bold letters, e.g. $\bbeta$, for vectors or vector functions. We will continue this fashion; in particular, we use $\0$ and $\1$ to denote vectors of all zeros and all ones respectively. Sometimes, we won't specify the dimensions of the vectors, which the readers should be able to figure out from the context.

We define $(\N^m)^* := \N^m \setminus \0$ to be the set of all vectors of $m$ nonnegative integers except the zero vector $\0.$ 

For any $\bn =(n_1, \dots, n_m) \in \N^m,$ we define
\[ \x^{\bn} := x_1^{n_1} x_2^{n_2} \cdots x_m^{n_m}, \quad (-1)^{\bn} := (-1)^{\sum_{i=1}^m n_i}.\]
Hence, we can write $(-\x)^{\bn}$ for $(-1)^{\bn} \x^{\bn}.$

Suppose $\bn_1 + \bn_2 + \cdots + \bn_i = \bn \in \N^m.$ We say $(\bn_1, \bn_2, \dots, \bn_i)$ is an {\it $i$-composition} of $\bn$ if $\bn_1, \dots, \bn_i \in (\N^m)^*;$ we say $(\bn_1, \bn_2, \dots, \bn_i)$ is a {\it weak $i$-composition} of $\bn$ if $\bn_1, \dots, \bn_i \in \N^m.$

\subsection{$\tau$-graphs: an alternative way of defining (long-edge) graphs}
Each edge $e$ of a (long-edge) graph $G$ contains two pieces of information: 
its weight $\rho(e)$ and its adjacent vertices. For convenience in defining the statistics $\lambda_j(G),$ we use the set $I(e) = \{a+1, a+2, \dots, b\}$ to represent the edge $\{a, b\}$ with $a <b.$ In this case, we say $e$ is of {\it type $(I(e), \rho(e))$}. We will use this representation to describe edges of our graphs. 


\begin{defn}\label{defn:taugraph}
Fixing a positive integer $m,$ let $I_1, \dots, I_m$ be subsets of $\N$ and $r_1, \dots, r_m \in \P.$ For each $1 \le i \le m,$ let $t_i = (I_i, r_i).$ We may assume $t_1, \dots, t_m$ are distinct. Let $\tau = (t_1, \dots, t_m).$ 

For any $\bn =(n_1, \dots, n_m) \in \N^m,$ we denote by $G_\tau(\bn)$ the graph on vertex set $\N$ that has $n_i$ edges of type $t_i$ for each $i.$ We call such a graph a {\it $\tau$-graph}.

Given a $\tau$-graph $G=G_\tau(\bn),$ we define its {\it multiplicity} to be
\[ \mu_\tau(\bn) := \prod_{i=1}^m \left((r_i)^2\right)^{n_i},\]
and its {\it cogenus} to be
 \[ \delta_\tau(\bn) := \sum_{i=1}^m \left( r_i |I_i|  -1 \right).\]
\end{defn}

\begin{rem}\label{rem:longedge}
Note that if we require $\tau=(t_1, \dots, t_m)$ to satisfy that for each $1 \le i \le m$,
\begin{enumerate}
\item the set $I_i$ is a set of consecutive integers, and
\item the product $r_i |I_i|$ is greater than $1,$
\end{enumerate}
we recover the definition of long-edge graphs. In particular, the definitions of multiplicity and cogenus agree with what we have defined before for long-edge graphs. Hence, $\tau$-graphs generalize long-edge graphs.

Strictly speaking, without condition (1), a $\tau$-graph is not a graph in the usual sense.
However, most of the arguments appearing in this paper work without the restrictions (1) and/or (2). 

In this paper, whenever we talk about long-edge graphs or templates, we will assume $\tau$ satisfies these two conditions without explicitly stating it.
\end{rem}

\begin{ex}\begin{enumerate}
	\item	Suppose $m=1$ and $\tau = (t_1) = ( (I, r)),$ where $I =\{1\}$  and $r \in \P.$ Then $G_\tau(n)$ is the graph with $n$ edges of weight $r$ connecting vertices $0$ and $1$.

	\item Suppose $m=2$ and $\tau = (t_1, t_2) = ( (I_1, r_1), (I_2, r_2)),$ where $I_1 = \{1\}$, $I_2 = \{1,2\}$ and $r_1, r_2 \in P.$ Then $G_\tau(n_1, n_2)$ is the graph with $n_1$ edges of weight $r_1$ connecting $0$ and $1$ and $n_2$ edges of weight $r_2$ connecting vertices $0$ and $2.$
\end{enumerate}
\end{ex}

We can naturally extend all the definitions for long-edge graphs, such as 
allowability 
and shifted graphs, to $\tau$-graphs. For convenience, we write
\begin{align*} \lambda_j(\tau,\bn) :=& \lambda_j(G_\tau(\bn)) = \sum_{i: \ j \in I_i} r_i n_i, \text{ and } \\
 \olambda_j(\tau,\bn) :=& \olambda_j(G_\tau(\bn)) = \sum_{i: \ j \in I_i} r_i n_i - \sum_{i: \ I_i = \{j\}} n_i.
 \end{align*}

\begin{defn}Suppose $\tau = (t_1, \dots, t_m),$ where $t_i = (I_i, r_i).$ Let $\maxv(\tau)$ be the largest integer appearring in $I_1, \dots, I_m,$
	  equivalently,
\[ \maxv(\tau) := \maxv(G_\tau(\bn)), \forall \bn \in (\N^*)^m.\]
\end{defn}

We also extend the concepts of (shifted) template in the following way.

\begin{defn}\label{defn:tautemplate}
  Suppose $\tau = (t_1, \dots, t_m),$ where $t_i = (I_i, r_i).$ Let $\bn \in \N^m.$ Let $\supp(\bn) := \{ i \ | \ n_i \neq 0\}$ be the set of indicies $i$ where $G_\tau(\bn)$ has edges of type $t_i.$ 
  
  We say $G_\tau(\bn)$ is a {\it $\tau$-template} if one cannot decompose $\supp(\bn)$ into two sets $S_1$ and $S_2$ such that $\cup_{i \in S_1} I_i$ and $\cup_{i \in S_2} I_{i}$ are disjoint. Otherwise, we way $G_\tau(\bn)$ is {\it not a $\tau$-template}. 
\end{defn}
One checks that if $\tau$ satisfying the conditions in Remark \ref{rem:longedge}, then $G_\tau(\bn)$ is a shifted template if and only if it is a $\tau$-template. On the other hand, any long-edge graph that is not a shifted template can be described as a $\tau$-graph (for some $\tau$) that is not a $\tau$-template.


		We have the following lemma on the function $P_\bbeta.$
\begin{lem}\label{lem:polyP}
Suppose $\tau = (t_1, \dots, t_m)$ satisfying $\maxv(\tau) \le \ell$ and $\bn \in \N^m$. Then for any $\bbeta = (\beta_1, \dots, \beta_\ell)$ satisfying $\beta_j \ge \olambda_j(\tau, \bn)$ for all $j$, the values $P_\bbeta\left(G_\tau(\bn)\right)$ are given by a multivariate polynomial in $\bbeta$ whose total degree is $|\bn| = n_1 + n_2 + \cdots + n_m,$ which is the number of edges in $G_\tau(\bn).$
\end{lem}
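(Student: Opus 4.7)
The plan is to establish an explicit polynomial formula for $P_\bbeta(G_\tau(\bn))$ by enumerating $\bbeta$-extended orderings directly, and then verify that this formula computes $P_\bbeta$ on the entire region $\beta_j \ge \olambda_j(\tau,\bn)$.

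First I would work on the strong-allowability region $\beta_j \ge \lambda_j(\tau,\bn)$ for every $j$. Each $\bbeta$-extended ordering of $G := G_\tau(\bn)$ is determined by two pieces of data: (i) a ``gap assignment'' recording, for each edge $e$ of $G$, which gap $\{j-1,j\}$ (with $j \in I(e)$) the edge is placed in, encoded by a matrix $(a_{ij})$ where $a_{ij}$ counts the type-$t_i$ edges placed in gap $j$ (so $a_{ij}=0$ unless $j \in I_i$, and $\sum_j a_{ij} = n_i$), and (ii) a total ordering of the edges inside each gap. Counting orderings first with all edges labeled, and then dividing by the free action of the automorphism group $\prod_i S_{n_i} \times \prod_j S_{\beta_j - \lambda_j}$ on labeled orderings, yields
\begin{equation*}
P_\bbeta(G_\tau(\bn)) \;=\; \sum_{(a_{ij})} \; \frac{1}{\prod_{i,j} a_{ij}!} \; \prod_j \prod_{k=1}^{\sum_i a_{ij}} \bigl(\beta_j - \lambda_j(\tau,\bn) + k\bigr).
\end{equation*}
Each summand is manifestly a polynomial in $\bbeta$ whose total degree equals $\sum_{i,j} a_{ij} = |\bn|$, so the whole expression is a polynomial of total degree $|\bn|$.

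The second step is to extend the formula to the weaker region $\beta_j \ge \olambda_j(\tau,\bn)$. If some $\beta_{j_0}$ lies in the range $\olambda_{j_0} \le \beta_{j_0} < \lambda_{j_0}$, then $G$ is not $\bbeta$-allowable and $P_\bbeta(G_\tau(\bn)) = 0$ by definition, so I must verify that the polynomial above also vanishes. The key observation is that any type $t_i$ with $I_i = \{j_0\}$ is forced to have $a_{i,j_0} = n_i$ in every valid assignment, hence $\sum_i a_{i,j_0} \ge s_{j_0} := \lambda_{j_0}-\olambda_{j_0}$ in every term of the sum. The inner product at $j = j_0$ therefore always contains the $s_{j_0}$ consecutive integer factors
\[
\beta_{j_0}-\olambda_{j_0}-s_{j_0}+1,\; \beta_{j_0}-\olambda_{j_0}-s_{j_0}+2,\; \ldots,\; \beta_{j_0}-\olambda_{j_0},
\]
and the condition $0 \le \beta_{j_0}-\olambda_{j_0} < s_{j_0}$ forces one of these factors to equal zero, killing every summand.

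The main delicacy will be this second step: I must isolate precisely the factors forced to appear in the product by the short-edge types supported at $j_0$, so that the polynomial vanishes exactly where the combinatorial quantity does. The enumerative derivation in Step 1 is routine once the automorphism group's action on labeled orderings is noted to be free (which is immediate, since any non-trivial permutation of distinguishable edges produces a distinct labeled sequence). Combining the two steps gives the desired polynomial representation on the full domain $\{\bbeta : \beta_j \ge \olambda_j(\tau,\bn)\}$, with total degree $|\bn|$.
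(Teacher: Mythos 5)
Your proof is correct and follows essentially the same two-step structure as the paper: first derive an explicit polynomial formula for $P_\bbeta$ valid when $\beta_j \ge \lambda_j(\tau,\bn)$ (your labeled-count-then-divide expression is algebraically equivalent to the paper's sum $\sum_{\bc}\sum_A \prod_j \binom{\beta_j-\lambda_j+c_j}{c_j}\binom{c_j}{a_{1,j},\dots,a_{m,j}}$ over $\tau$-compatible contingency tables), and then observe that $\tau$-compatibility forces $c_{j_0} \ge \lambda_{j_0}-\olambda_{j_0}$, so that when $\olambda_{j_0} \le \beta_{j_0} < \lambda_{j_0}$ every summand contains the factor $0$. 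The paper packages the vanishing as $\binom{\beta_{j_0}-\lambda_{j_0}+c_{j_0}}{c_{j_0}} = 0$ while you isolate the zero factor in the falling product directly, but the key observation is identical.
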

We will include a proof of Lemma \ref{lem:polyP} in the next section. 

Finally, we rewrite the definitions of $\Phi_\bbeta$ and $\Phi_\bbeta^s$ given in \eqref{equ:qG} and \eqref{equ:qdG}, and then restate Lemma \ref{lem:vanish0} and Theorem \ref{thm:main0} in stronger versions. Note that the definition of $P_\bbeta^s(G_\tau(\bn))$ only works if $G_\tau(\bn)$ is a long-edge graph but the defintion of $P_\bbeta(G_\tau(\bn))$ can be extended to any $\tau$-graphs.
	\begin{defn}
	Let $\bn \in (\N^*)^m.$ Define
	\begin{align}
		\Phi_\bbeta(G_\tau(\bn)) :=& \sum_{i \ge 1} \frac{(-1)^{i+1}}{i} \sum_{(\bn_1, \bn_2, \dots, \bn_i) } \prod_{j=1}^i P_\bbeta(G_\tau(\bn_j)). \label{equ:Q-P} \\
		\intertext{ and if $\tau$ satisfies the conditions in Remark \ref{rem:longedge}, also define}
		\Phi_\bbeta^s(G_\tau(\bn)) :=& \sum_{i \ge 1} \frac{(-1)^{i+1}}{i} \sum_{(\bn_1, \bn_2, \dots, \bn_i) } \prod_{j=1}^i P_\bbeta^s(G_\tau(\bn_j)), \label{equ:Q_d-P_d} 
	\end{align}
Here for both equations, the second summation is over all the $i$-compositions of $\bn$.
	\end{defn}

\begin{lem}\label{lem:vanish}
	Suppose $\tau = (t_1, \dots, t_m)$ where $t_i=(I_i,r_i)$ 
	and $\bn \in \N^m$ satisfying $G_\tau(\bn)$ is not a $\tau$-template. Then $\Phi_\bbeta(G_\tau(\bn)) =0.$

	Furthermore, if $\tau$ satisfies the conditions in Remark \ref{rem:longedge}, we have $\Phi_\bbeta^s(G_\tau(\bn)) =0.$
      \end{lem}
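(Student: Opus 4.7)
Since $\Phi_\bbeta(G_\tau(\bn))$ is, by \eqref{equ:Q-P}, the coefficient of $\x^{\bn}$ in the formal logarithm $\log\bigl(1 + f(\x)\bigr)$ of the series $f(\x) = \sum_{\bm{m} \in (\N^m)^*} P_\bbeta(G_\tau(\bm{m}))\,\x^{\bm{m}}$, the natural strategy is to exhibit a factorization of $f$ that rules out the monomial $\x^{\bn}$ from the logarithm. Suppose $G_\tau(\bn)$ is not a $\tau$-template, so $\supp(\bn) = S_1 \sqcup S_2$ with the $I$-unions $U_1 := \bigcup_{i \in S_1} I_i$ and $U_2 := \bigcup_{i \in S_2} I_i$ disjoint, and write $\bn = \bn_1 + \bn_2$ for the corresponding restrictions. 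The plan is to establish the multiplicativity
\[
P_\bbeta\bigl(G_\tau(\bm{m})\bigr) = P_\bbeta\bigl(G_\tau(\bm{m}_1)\bigr) \cdot P_\bbeta\bigl(G_\tau(\bm{m}_2)\bigr)
\]
for every $\bm{m}$ with $\supp(\bm{m}) \subseteq S_1 \cup S_2$, where $\bm{m}_1, \bm{m}_2$ are its restrictions to $S_1, S_2$, and to prove the analogous identity for $P_\bbeta^s$ under the long-edge hypothesis.

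For the multiplicativity, I view an $\bbeta$-extended ordering as an assignment of each edge $e$ to some slot $j \in I(e)$, followed by an ordering of the edges in each slot taken modulo the edge-type equivalences. An $S_1$-type edge can only sit in a $U_1$-slot and an $S_2$-type edge only in a $U_2$-slot, and since $U_1 \cap U_2 = \emptyset$, the two assignments are independent. For $j \in U_1$ one has $\lambda_j(G_\tau(\bm{m})) = \lambda_j(G_\tau(\bm{m}_1))$, so slot $j$ in $\ext_\bbeta(G_\tau(\bm{m}))$ carries exactly the same collection of weighted and unweighted edges as slot $j$ in $\ext_\bbeta(G_\tau(\bm{m}_1))$; the symmetric statement holds for $U_2$, and every slot outside $U_1 \cup U_2$ contributes a trivial factor of $1$ in all three graphs. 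Because disjointness of the $I_i$'s forces the corresponding edges to have distinct vertex pairs, the equivalences among weighted edges also split cleanly between $S_1$ and $S_2$, yielding the product. For the strict version, any edge incident to vertex $0$ has $1 \in I_i$, so all such edges lie in whichever of $S_1, S_2$ contains $1$ in its $I$-union; the same applies to vertex $M+1$. Consequently, conditions (b) and (c) of strict $\bbeta$-allowability transfer between $G_\tau(\bm{m})$ and its two pieces, giving the analogous multiplicativity for $P_\bbeta^s$.

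Once multiplicativity is established, the conclusion is immediate from generating-function calculus. Setting $x_i = 0$ for all $i \notin S_1 \cup S_2$ yields a series $f_0(\y, \z)$ in variables $\y, \z$ indexing $S_1, S_2$. The multiplicativity factors this restricted sum as
\[
1 + f_0(\y, \z) = \bigl(1+g(\y)\bigr)\bigl(1+h(\z)\bigr)
\]
for appropriate sub-series $g, h$, and therefore $\log(1+f_0) = \log(1+g) + \log(1+h)$ contains no monomial involving both $\y$ and $\z$. Since $\bn_1 \neq \0$ and $\bn_2 \neq \0$, the coefficient $[\y^{\bn_1}\z^{\bn_2}]\log(1+f_0)$ vanishes, and this equals $[\x^{\bn}]\log(1+f) = \Phi_\bbeta(G_\tau(\bn))$. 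The identical argument applied to $P_\bbeta^s$ yields $\Phi_\bbeta^s(G_\tau(\bn)) = 0$.

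The main obstacle I anticipate is the careful verification of the multiplicativity lemma. The intuition—edges of the two pieces live in disjoint slot-ranges and therefore cannot interact—is transparent, but a rigorous proof must track three flavors of slot ($j \in U_1$, $j \in U_2$, and $j$ in the complement), confirm that the unweighted edges added to each slot are consistent across $\ext_\bbeta(G_\tau(\bm{m}))$, $\ext_\bbeta(G_\tau(\bm{m}_1))$, and $\ext_\bbeta(G_\tau(\bm{m}_2))$, and show that the group of edge-automorphisms factors as a direct product. The explicit slot-based description of $P_\bbeta$ underlying Lemma \ref{lem:polyP} should supply the right framework for this bookkeeping.
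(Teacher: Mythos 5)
Your proposal is correct and follows essentially the same route as the paper: split $\supp(\bn)$ into two parts with disjoint $I$-unions, show that $P_\bbeta$ (and $P_\bbeta^s$) factors multiplicatively across the two parts, then observe that $\log$ of a product has no cross terms, so the coefficient of $\x^{\bn}$ with $\bn_1, \bn_2$ both nonzero vanishes. The only cosmetic differences are that the paper reduces WLOG to $\supp(\bn) = \{1,\dots,m\}$ rather than specializing variables to zero, leaves $f_1, f_2$ unnamed (whereas you identify them as $P_\bbeta$ of the restricted graphs, which is a cleaner statement), and offers the additional shortcut that $\Phi_\bbeta^s$ vanishing follows from $\Phi_\bbeta$ vanishing via Lemma~\ref{lem:PPhi}. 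Like the paper, you sketch rather than fully spell out the multiplicativity; your slot-based outline is sound and in fact gives more justification than the paper itself provides.
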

It follows from the comments after Defintion \ref{defn:tautemplate} that this lemma implies Lemma \ref{lem:vanish0}.

We then restate Theorem \ref{thm:main0} using the language of $\tau$-graphs.
\begin{thm}\label{thm:main}
Suppose $\tau = (t_1, \dots, t_m)$ satisfying $\maxv(\tau) \le \ell$ and $\bn \in \N^m$. Then for any $\bbeta = (\beta_1, \dots, \beta_\ell)$ satisfying $\beta_j \ge \olambda_j(\tau, \bn)$ for all $j$, the values $\Phi_\bbeta\left(G_\tau(\bn)\right)$ are given by a linear multivariate function in $\bbeta.$ 
\end{thm}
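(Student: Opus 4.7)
The plan is to follow the two reductions the author announces in the introduction and then to focus on the combinatorial heart of the proof.

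First, I would package the $\Phi_\bbeta$ values into a formal power series in new variables $\x=(x_1,\dots,x_m)$. From the definition in \eqref{equ:Q-P} one has the formal identity
\[ \log\!\left(1 + \sum_{\bn \in (\N^m)^*} P_\bbeta(G_\tau(\bn))\,\x^\bn\right) = \sum_{\bn \in (\N^m)^*} \Phi_\bbeta(G_\tau(\bn))\,\x^\bn, \]
so Theorem \ref{thm:main} is equivalent to the claim that the coefficient of $\x^\bn$ in this logarithm is linear in $\bbeta$ throughout the chamber $\beta_j \ge \olambda_j(\tau,\bn)$. By Lemma \ref{lem:polyP} the coefficient of $\x^\bn$ on the left-hand side before taking logarithm is a polynomial in $\bbeta$ of total degree $|\bn|$, so the theorem is asserting a highly nontrivial cancellation: the top degree $|\bn|-1$ pieces must all collapse.

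Second, I would reformulate $P_\bbeta$ combinatorially in terms of $(\tau,\bn)$-words. An $\bbeta$-extended ordering of $G_\tau(\bn)$ is specified by assigning each edge of $G_\tau(\bn)$ to a ``slot'' determined by the natural vertex order on $\ext_\bbeta(G_\tau(\bn))$, so $P_\bbeta(G_\tau(\bn))$ has a clean description as a weighted count of words in a finite alphabet (the edge types of $\tau$ together with separator symbols for vertices). The author's reciprocity step is the substitution that turns this generating function upside down: after replacing $\bbeta$ by a formal $\t$ and tracking signs, one obtains a companion generating function on the family $S_\tau(\bn,\t)$ whose log corresponds — via a sign-twisted inclusion/exclusion — to the log computed above. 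This converts Theorem \ref{thm:main} into Theorem \ref{thm:tauwords}: a statement that the generating function for $(\tau,\bn)$-words factors multiplicatively in a way controlled by a linear piece.

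Third, and this is where the genuine work lies, I would define a height function on $(\tau,\bn)$-words by reading a prefix letter by letter and recording a vector-valued statistic (roughly, how many edges of each type are still ``open'' relative to how many slots have been consumed). A word is called irreducible when this height vector never returns to its initial value until the end. The key lemma is an algorithm: starting from any word in $S_\tau(\bn,\t)$ of a prescribed target height, scan from the left and cut at the first position where the height profile satisfies the irreducibility condition; this extracts a unique irreducible initial subword, and iterating yields a unique factorization into irreducibles. The weight is multiplicative across the factorization, so the generating function of $(\tau,\bn)$-words equals the multiplicative closure of the generating function of irreducible ones, and hence the logarithm is exactly the generating function of irreducibles. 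Finally, a direct count shows that an irreducible word of type $\bn$ can carry at most a linear dependence on $\bbeta$, because the only place $\bbeta$ appears is as the number of free positions in each vertex slot, and irreducibility forces all but one of these slot-counts to be rigidly prescribed.

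The two main obstacles are (i) verifying that the height-based extraction of an initial irreducible subword is well defined, bijective with the set of factorizations, and respects the sign/weight conventions needed for the reciprocity, and (ii) establishing the precise linearity of the irreducible count — the height constraints must force all ``higher-order'' contributions of $\bbeta$ to cancel. The vanishing statement Lemma \ref{lem:vanish} fits naturally into this framework: if $G_\tau(\bn)$ splits as a disjoint union on the vertex level, then no word of type $\bn$ can be irreducible (the height returns to its initial value at the split), and the entire log coefficient vanishes. Everything else — deducing linearity of $\Phi_\bbeta$ from linearity of the irreducible word count and carrying the strict version $\Phi^s_\bbeta$ through the same machinery — is bookkeeping on top of this combinatorial decomposition.
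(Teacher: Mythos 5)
Your outline correctly identifies all the major structural pieces — passing to generating functions, the binomial reciprocity that converts $\sP_{\tau,\bbeta}$ into $\cS_{\tau,\t}$ with $\t=-\bbeta-\1$, the height function on $(\tau,\bn)$-words, the notion of irreducibility via a ``never returns to the target height early'' condition, and an algorithmic extraction of a unique irreducible initial subword. This matches the route the paper takes to reduce Theorem~\ref{thm:main} through Theorem~\ref{thm:main1} to Theorem~\ref{thm:tauwords}.

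However, your final step is wrong, and it is precisely the step that produces linearity. You assert that ``the generating function of $(\tau,\bn)$-words equals the multiplicative closure of the generating function of irreducible ones, and hence the logarithm is exactly the generating function of irreducibles,'' and then that ``a direct count shows that an irreducible word of type $\bn$ can carry at most a linear dependence on $\bbeta$.'' Neither claim is what the argument requires. The set $S_\tau(\bn,\t)$ is not a free monoid over irreducibles, and $|S_\tau^{\irr}(\bn,\t)|$ need not be linear in $\t$ at all (it is a polynomial of degree up to $|\bn|$). The correct mechanism is a two-stage decomposition parametrized by target heights. Stage one: every word of height $-\t$ factors uniquely as (an irreducible word of height $-\t$) concatenated with (a balanced word), giving $\cS_{\tau,\t}(\x)=\cS_{\tau,\t}^{\irr}(\x)\cdot\cS_{\tau,\0}(\x)$. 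Stage two: for \emph{any} weak composition $(\t_1,\dots,\t_k)$ of $\t$, an irreducible word of height $-\t$ factors uniquely into irreducible pieces of heights $-\t_1,\dots,-\t_k$; applying this with $\t=\sum_j t_j\be_j$ and the weak composition repeating each $\be_j$ exactly $t_j$ times yields $\cS_{\tau,\t}^{\irr}(\x)=\prod_j\bigl(\cS_{\tau,\be_j}^{\irr}(\x)\bigr)^{t_j}$. Linearity then comes for free: $\log\cS_{\tau,\t}(\x)=\sum_j t_j\log\cS_{\tau,\be_j}^{\irr}(\x)+\log\cS_{\tau,\0}(\x)$, so the coefficient of $\x^\bn$ is literally an affine function of $\t$. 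The linearity is a consequence of the \emph{exponent} $t_j$ in the product structure, not of any bound on the $\bbeta$-dependence of an irreducible count. Your proposal never establishes the crucial fact that the irreducible generating function at a general target height is a product of elementary ones raised to the coordinate powers, and without it you have no route to the linearity.

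You also silently skip the polynomiality-and-Zariski-density step needed to transfer the identity $\cS_{\tau,\t}=\prod_j(F^{(j)})^{t_j}H$ from $\t\in\N^\ell$ (where the word-counting argument lives) to $\t\in\Z^\ell$, which is required because reciprocity feeds in $\t=-\bbeta-\1$ with negative entries. This is a small point but it is load-bearing. Finally, your claim that Lemma~\ref{lem:vanish} falls out of the irreducibility framework (``no word of type $\bn$ can be irreducible, so the log coefficient vanishes'') is not what the log coefficient is, so even if true it would not directly give the conclusion; in the paper Lemma~\ref{lem:vanish} is proved separately by a simple factorization of $P_\bbeta$ over independent blocks.
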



\subsection{An approach of generating functions}
We first state the following basic fact on generating functions:
suppose $f(\bn), g(\bn)$ are defined for $\bn \in (\N^m)^*.$ Then
\begin{align*}
  & g(\bn) = \sum_{i \ge 1} \frac{(-1)^{i+1}}{i} \sum_{(\bn_1, \bn_2, \dots, \bn_i): \ \text{$i$-composition of $\bn$} } \ \prod_{j=1}^i f(\bn_j), \qquad \forall \bn \\
	\Longleftrightarrow & \sum_{\bn \in (\N^m)^*} g(\bn) \x^{\bn} = \log \left( 1 + \sum_{\bn \in (\N^m)^*} f(\bn) \x^{\bn} \right)
\end{align*}

\begin{proof}[Proof of Lemma \ref{lem:vanish}]
  Without loss of generality, we may assume $\supp(\bn) = \{1, \dots, m\}$. Hence, $\bn \in \P^m.$ Further, we may assume there exists $m': 1 \le m' < m$ such that $\cup_{1 \le i \le m'} I_i$ and $\cup_{m'+1 \le i \le m} I_i$ are disjoint, and $\bn \in (\N^{m'})^* \times (\N^{m-m'})^*.$ We will show that for any $\bn' =  (\bn_1, \bn_2) \in (\N^{m'})^* \times (\N^{m-m'})^*,$ we have $\Phi_\bbeta(G_\tau(\bn')) = 0$. (So in particular, $\Phi_\bbeta(G_\tau(\bn)) = 0$.)

Recall that functions $\Phi_\bbeta$ and $P_\bbeta$ satisfy \eqref{equ:Q-P}.  Hence,
\[  \sum_{\bn' \in (\N^m)^*} \Phi_\bbeta(G_\tau(\bn')) \x^{\bn'}= \log\left(1 + \sum_{\bn' \in (\N^m)^*} P_\bbeta(G_\tau(\bn')) \x^{\bn'}\right)= \log\left(\sum_{\bn' \in \N^m} P_\bbeta(G_\tau(\bn')) \x^{\bn'}\right), \]
where the second equality follows from the fact that $G_\tau(\0)$ is the empty graph and Remark \ref{rem:emptygraph}. 

However, by the assumption of $\tau$ and the definition of $P_\bbeta$, we see that there exist functions $f_1(\bn_1)$ and $f_2(\bn_2)$ such that for any $\bn' = (\bn_1, \bn_2) \in (\N^{m'}) \times (\N^{m-m'}),$ we have
\[ P_\bbeta(G_\tau(\bn')) = f_1(\bn_1) \cdot f_2(\bn_2).\]
Let $\x = (y_1, \dots, y_{m'}, z_1, \dots, z_{m-m'}).$ Then
\begin{align*}
	\sum_{(\bn_1, \bn_2) \in (\N^m)^*} \Phi_\bbeta(G_\tau(\bn')) \y^{\bn_1} \z^{\bn_2} 
	=&  \log\left(\sum_{(\bn_1,\bn_2) \in \N^m} f_1(\bn_1) \cdot f_2 (\bn_2) \ \y^{\bn_1} \z^{\bn_2}\right) \\
	=& \log\left( \sum_{\bn_1} f_1(\bn_1) \y^{\bn_1} \right) + \log\left( \sum_{\bn_2} f_2(\bn_2) \z^{\bn_2} \right).
\end{align*}
Since the last expression only involves terms $c_{\bn_1, \bn_2} \y^{\bn_1} \z^{\bn_2}$ with one of $\bn_1$ and $\bn_2$ being zero, we've shown that $\Phi_\bbeta(G_\tau(\bn')) = 0$ for any $\bn' =  (\bn_1, \bn_2) \in (\N^{m'})^* \times (\N^{m-m'})^*.$ 

The proof of $\Phi_\bbeta^s(G_\tau(\bn))=0$ follows from exactly the same argument. Or alternatively, it also follows from Lemma \ref{lem:PPhi}, which implies that for any long edge graph $G,$ if $\Phi_\bbeta(G)=0$, then $\Phi_\bbeta^s(G)=0.$
\end{proof}

Before discussing Theorem \ref{thm:main}, we define two relevant polynomial functions. 
\begin{defn}\label{defn:twopolys}
Suppose $\tau = (t_1, \dots, t_m)$ satisfying $\maxv(\tau) \le \ell$ and $\bn \in \N^m$. Let $p_\tau(\bn, \bbeta)$ be the multivariate polynomial in $\bbeta$ 
	described in Lemma \ref{lem:polyP}. Since it is a polynomial, we can extend it to any $\bbeta \in \Z^\ell.$ 

We then define another polynomial in $\bbeta:$
\begin{equation}\label{equ:q-p} \varphi_\tau(\bn, \bbeta)  :=\sum_{i \ge 1} \frac{(-1)^{i+1}}{i} \sum_{(\bn_1, \bn_2, \dots, \bn_i) } \prod_{k=1}^i p_\tau(\bn_k, \bbeta),
\end{equation}
where the second summation is over all the $i$-compositions of $\bn$.

\end{defn}


\begin{cor}\label{cor:polyQ}
	Suppose $\tau = (t_1, \dots, t_m)$ satisfying $\maxv(\tau) \le \ell$ and $\bn \in \N^m$. Then for any $\bbeta = (\beta_1, \dots, \beta_\ell)$ satisfying $\beta_j \ge \olambda_j(\tau, \bn)$ for all $j$, the values $\Phi_\bbeta\left(G_\tau(\bn)\right)$ are given by the multivariate polynomial $\varphi_\tau(\bn,\bbeta)$.

	Furthermore, if $G_\tau(\bn)$ is not a $\tau$-template, then $\varphi_\tau(\bn,\bbeta) =0.$
\end{cor}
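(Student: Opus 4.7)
The plan is to derive Corollary \ref{cor:polyQ} as a routine consequence of Lemma \ref{lem:polyP} (polynomiality of $P_\bbeta$) together with Lemma \ref{lem:vanish} (vanishing of $\Phi_\bbeta$ on non-$\tau$-templates). No new combinatorial input should be needed; the whole argument is essentially a bookkeeping step that transfers these two inputs through the logarithmic formalism \eqref{equ:Q-P}--\eqref{equ:q-p}.

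For the polynomial identity statement, I would work term by term on the defining sum \eqref{equ:Q-P}. The one preliminary observation I need is that for any $i$-composition $(\bn_1,\dots,\bn_i)$ of $\bn$, the inequality $\bn_k \le \bn$ holds componentwise, and consequently the explicit formula for $\olambda_j(\tau,\cdot)$ (together with $r_{i'} \ge 1$) makes $\olambda_j$ monotone nondecreasing in its vector argument. Hence $\bbeta \ge \olambda(\tau,\bn)$ forces $\bbeta \ge \olambda(\tau,\bn_k)$ for every $k$, so Lemma \ref{lem:polyP} applies to each factor and gives $P_\bbeta(G_\tau(\bn_k)) = p_\tau(\bn_k,\bbeta)$. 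Comparing \eqref{equ:Q-P} with \eqref{equ:q-p} then yields $\Phi_\bbeta(G_\tau(\bn)) = \varphi_\tau(\bn,\bbeta)$ throughout the prescribed region.

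For the vanishing statement, I would invoke the standard polynomial identity principle. If $G_\tau(\bn)$ is not a $\tau$-template, Lemma \ref{lem:vanish} gives $\Phi_\bbeta(G_\tau(\bn)) = 0$ for every $\bbeta$; combined with the first part just established, this forces the polynomial $\varphi_\tau(\bn,\bbeta)$ to vanish on every integer point of the box $\prod_{j=1}^\ell [\olambda_j(\tau,\bn),\infty)$. Since a polynomial in $\ell$ variables that vanishes on a product of infinite sets of integers is necessarily the zero polynomial, we conclude $\varphi_\tau(\bn,\bbeta) \equiv 0$.

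I do not expect a real obstacle here: the substantive work has been done in the two cited lemmas. The only detail worth flagging is that Lemma \ref{lem:polyP} is stated using the reduced quantity $\olambda_j$ rather than the larger $\lambda_j$; this is precisely what is needed so that the componentwise inequality $\bbeta \ge \olambda(\tau,\bn)$ passes through to every subvector $\bn_k$ appearing in the decomposition, which is the key monotonicity step above. Once that is noted, both halves of the corollary are short formal consequences.
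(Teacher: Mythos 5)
Your proof is correct and follows essentially the same route as the paper: verify that $\olambda_j(\tau,\cdot)$ is componentwise monotone so that $\bbeta \ge \olambda(\tau,\bn)$ passes to every $\bn_k$ in a composition, apply Lemma \ref{lem:polyP} factor by factor to identify \eqref{equ:Q-P} with \eqref{equ:q-p}, and then deduce the vanishing statement from Lemma \ref{lem:vanish} together with the fact that a polynomial vanishing on a product of infinite integer sets is identically zero. The paper leaves the monotonicity observation and the polynomial-identity step implicit, but your fuller spelling-out is exactly the intended argument.
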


\begin{proof}
  Note that if $\beta_j \ge \olambda_j(\tau, \bn)$ for all $j$, for any $i$-composition $(\bn_1, \dots, \bn_i)$ of $\bn,$ we have that $\beta_j \ge \olambda_j(\tau, \bn_k)$ any $1 \le j \le \ell$ and $1 \le k \le i.$ Hence, by Lemma \ref{lem:polyP} and the defintion of $p_\tau(\bn, \bbeta),$ 
  Equation \eqref{equ:Q-P} becomes
\begin{equation} \Phi_\bbeta(G_\tau(\bn))  =\sum_{i \ge 1} \frac{(-1)^{i+1}}{i} \sum_{(\bn_1, \bn_2, \dots, \bn_i) } \prod_{k=1}^i p_\tau(\bn_k, \bbeta).
\end{equation}
Thus, the first conclusion follows.

Then the second part of the corollary follows from Lemma \ref{lem:vanish}.
\end{proof}
One sees that Theorem \ref{thm:main} just says that this multivariate polynomial $\varphi_\tau(\bn, \bbeta)$ actually is linear in $\bbeta$ for any fixed $\bn.$

Because of \eqref{equ:q-p}, it is natural to consider the following generating function
\begin{equation}\label{equ:defnsP} \sP_{\tau, \bbeta}(\x) := 1 + \sum_{\bn \in (\N^m)^*} p_\tau(\bn, \bbeta) \x^\bn.
\end{equation}
Then
\begin{equation}\label{equ:log}
\sum_{\bn \in (\N^m)^*} \varphi_\tau(\bn, \bbeta) \x^\bn = \log \left( \sP_{\tau, \bbeta}(\x) \right).
\end{equation}
We have the following theorem for the generating function $\sP_{\tau,\bbeta}(\x).$

\begin{thm}\label{thm:main1}
  Let $\tau = (t_1, \dots, t_m),$ where $t_i = (I_i, m_i)$, and $\ell$ a positive integer satisfying $\ell \ge \maxv(\tau).$ Then there exists formal power series $F_{\tau}^{(1)}(\x), F_{\tau}^{(2)}(\x), \dots, F_{\tau}^{(\ell)}(\x)$ and $H_{\tau}(\x)$ with $F_{\tau}^{(j)}(\0) =1$ for each $j$ and $H_\tau(\0)=1$ such that for any $\bbeta \in \Z^\ell,$ 
\[ \sP_{\tau, \bbeta} (\x) = \left( F_{\tau}^{(1)}(-\x) \right)^{-\beta_1-1} \cdots  \left( F_{\tau}^{(\ell)}(-\x) \right)^{-\beta_\ell-1} \cdot H_{\tau}(-\x).\]
\end{thm}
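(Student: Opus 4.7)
My plan is to reformulate $\sP_{\tau,\bbeta}(\x)$ as a single coefficient extraction in auxiliary slot variables $\v=(v_1,\dots,v_\ell)$, and then extract the factored form via multivariate residue calculus. First, I would verify the identity
\begin{equation*}
\sP_{\tau,\bbeta}(\x)=[\v^{\bbeta}]\prod_{j=1}^\ell\frac{1}{1-v_j-U_j(\x,\v)},\qquad U_j(\x,\v):=\sum_{i\,:\,j\in I_i} x_i \prod_{j'\in I_i} v_{j'}^{r_i},
\end{equation*}
for integer $\bbeta\ge\0$ componentwise large enough that $p_\tau(\bn,\bbeta)$ coincides with the honest combinatorial count $P_\bbeta(G_\tau(\bn))$. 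The verification is immediate from the multinomial expansion of each slot factor; the monomials $\prod_{j'\in I_i}v_{j'}^{r_i}$ are chosen to record that a real edge of type $t_i$ contributes weight $r_i$ to every slot in its interval $I_i$, so extracting $[\v^{\bbeta}]$ enforces the dummy counts $d_j=\beta_j-\lambda_j(\tau,\bn)$ built into the definition of $p_\tau$.

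Next I would analyze $[\v^{\bbeta}]$ as a multidimensional residue. The system $D_j(\v,\x):=1-v_j-U_j(\x,\v)=0$ has $\v=\1$ as its unique solution at $\x=\0$, so the implicit function theorem produces a unique analytic ``dominant'' branch $(\alpha_1(\x),\dots,\alpha_\ell(\x))$ of common zeros with $\alpha_j(\0)=1$. Applying the Grothendieck local residue formula at this branch, its contribution to $[\v^{\bbeta}]$ equals
\begin{equation*}
\prod_{j=1}^\ell \alpha_j(\x)^{-\beta_j-1}\cdot\frac{1}{\det\bigl(I+A(\alpha(\x),\x)\bigr)},\qquad A_{jk}:=\frac{\partial U_j}{\partial v_k}.
\end{equation*}
Setting $F_\tau^{(j)}(-\x):=\alpha_j(\x)$ and $H_\tau(-\x):=\det(I+A(\alpha(\x),\x))^{-1}$ already gives the factored form in the statement, and the normalizations $F_\tau^{(j)}(\0)=1$ and $H_\tau(\0)=1$ are immediate from $\alpha_j(\0)=1$ and $A(\1,\0)=0$.

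Finally, I would dispose of the contributions from the remaining common zeros of $D=\0$ by a polynomial-extension argument. Every non-dominant branch has, by Puiseux analysis at $\x=\0$ (since $\1$ is the unique finite solution there), at least one coordinate $\alpha_{i,j}(\x)$ tending to $\infty$; the factor $\alpha_{i,j}(\x)^{-\beta_j-1}$ therefore carries a positive power of $\x$ growing linearly in $\beta_j$, so its contribution to any fixed coefficient $[\x^{\bn}]\sP_{\tau,\bbeta}(\x)$ vanishes once $\bbeta$ is componentwise large. Since both the left and the dominant-branch right sides are, for each coefficient of $\x^{\bn}$, polynomials in $\bbeta$, their agreement on an infinite subset of $\Z^\ell$ forces equality as polynomials, hence on all of $\Z^\ell$.

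The main obstacle will be justifying the multidimensional residue calculation in the middle step: naive iterated residues do not suffice in several variables, so one must either invoke the global residue theorem on an appropriate toric compactification of $(\C^\times)^\ell$ (verifying that all ``residues at infinity'' vanish once $\bbeta$ is large) or, as a cleaner alternative, apply multivariate Lagrange--Good inversion to the implicit system $w_j=t_j\,\Psi_j(\w,\x)^{-1}$ with $\Psi_j:=1+U_j/v_j$. The Lagrange--Good route avoids residues entirely and directly produces the factored form, at the cost of identifying the Jacobian determinant that appears there with the claimed $H_\tau(-\x)$.
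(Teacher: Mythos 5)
Your proposal takes a genuinely different route from the paper's. The paper does not attack $\sP_{\tau,\bbeta}$ directly: it first proves the reciprocity $\sP_{\tau,\bbeta}(\x)=\cS_{\tau,-\bbeta-\1}(-\x)$ (Section~\ref{sec:polyreci}), which converts the weighted count $P_\bbeta$ into the plain cardinality of a set $S_\tau(\bn,\t)$ of $(\tau,\bn)$-words, and thereby reduces Theorem~\ref{thm:main1} to the equivalent Theorem~\ref{thm:tauwords}. That theorem is proved by elementary combinatorics: a height function on $(\tau,\bn)$-words, a notion of irreducibility, and an explicit algorithm (FIS) extracting the unique irreducible initial subword of prescribed height. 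This yields the multiplicativity $\cS_{\tau,\t}(\x)=\cS^{\irr}_{\tau,\t}(\x)\cdot\cS_{\tau,\0}(\x)$ and $\cS^{\irr}_{\tau,\t}(\x)=\prod_j\bigl(\cS^{\irr}_{\tau,\be_j}(\x)\bigr)^{t_j}$, so that $F_\tau^{(j)}$ and $H_\tau$ are the generating functions for irreducible words of height $-\be_j$ and for balanced words, respectively. By contrast you encode $\sP_{\tau,\bbeta}$ as a multivariate coefficient extraction in auxiliary slot variables and factor it analytically. Your slot-variable identity is correct: interpreting $[\v^\bbeta]$ coefficientwise in $\x^\bn$, the multinomial expansion reproduces the contingency-table formula \eqref{equ:polyP} precisely when $\beta_j\ge\lambda_j(\tau,\bn)$ for all $j$, and your ansatz $F_\tau^{(j)}(-\x)=\alpha_j(\x)$, $H_\tau(-\x)=\det(I+A)^{-1}\big|_{\v=\alpha(\x)}$ reproduces the paper's single-type answer (compare $\alpha=1-x\alpha^r$ and $1/(1+rx\alpha^{r-1})$ with \eqref{equ:G} and \eqref{equ:G2H3} after $x\mapsto -x$). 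The concluding polynomial-extension argument is the same density argument the paper uses at the end of Section~\ref{sec:finalproof}.

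The gap is exactly where you flag it, and it is not small. The ``sum over branches of a multidimensional residue'' is not a theorem as stated: in several variables the coefficient integral over a small polytorus around $\v=\0$ does not decompose by contour deformation into a sum of Grothendieck residues at the zeros of the system $\{D_j=0\}$ without a global residue theorem on a suitable compactification, and you must also verify that residues ``at infinity'' vanish. Moreover the non-dominant branches are not generally isolated analytic points near $\x=\0$; establishing the Puiseux estimate you need, uniformly in the remaining variables, requires real work. The Lagrange--Good route avoids these problems and is the right tool, but as written you have only named it: you still need to exhibit the functional system that the $\alpha_j(\x)$ satisfy (the multivariate analogue of $F=1+xF^r$ used in Section~\ref{sec:examples}), put $\sP_{\tau,\bbeta}$ in the precise form Good's theorem applies to, and then show the Jacobian determinant that Good produces is $\det(I+A)^{-1}$. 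Until one of these two paths is carried through, the middle step of your argument is a sketch, not a proof. A secondary cost of your route, even if completed, is that it gives $F_\tau^{(j)}$ and $H_\tau$ only as formal solutions of functional equations, whereas the paper's decomposition gives them a direct combinatorial meaning that is then exploited in Section~\ref{sec:examples}.
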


Assuming the above theorem, we can prove Theorem \ref{thm:main}.

\begin{proof}[Proof of Theorem \ref{thm:main}]
  Let $f^{(1)}(\bn), \dots, f^{(\ell)}(\bn)$ and $h(\bn)$ be the coefficients of $\x^{\bn}$ in $\log F_\tau^{(1)}(\x),$ $\dots, \log F_\tau^{(\ell)}(\x)$ and $\log H_{\tau}(\x)$ respectively, where $F_\tau^{(1)}(\x),$ $\dots, F_\tau^{(\ell)}(\x)$ and $H_{\tau}(\x)$ are the power series assumed by Theorem \ref{thm:main1}. In other words,
  \[		\log F_\tau^{(j)}(\x) = \sum_{\bn \in (\N^m)^*} f^{(j)}(\bn) \x^\bn \quad \forall j, \qquad \text{and} \quad \log H_{\tau}(\x) = \sum_{\bn \in (\N^m)^*} h(\bn) \x^\bn.\]
  Then \begin{align*}
    \log \left( \sP_{\tau, \bbeta}(\x) \right) =&  \log H_{\tau}(-\x)+ \sum_{j=1}^\ell (-\beta_j -1) \log F_\tau^{(j)}(-\x) \\
  =& \left(h(\bn)+ \sum_{j=1}^\ell (-\beta_j-1) f^{(j)}(\bn) \right) (-\x)^{\bn}.
\end{align*}
	Comparing with \eqref{equ:log}, we conclude that
	\begin{equation}\label{equ:qtau} \varphi_\tau(\bn, \bbeta) =   (-1)^{\bn}\left( - \sum_{j=1}^\ell f^{(j)}(\bn) \beta_j  + \left(h(\bn)- \sum_{j=1}^\ell f^{(j)}(\bn)\right)\right),
	\end{equation}
	which is a linear function in $\bbeta$ for any fixed $\bn.$ Hence, the conclusion follows from Corollary \ref{cor:polyQ}.
\end{proof}

Therefore, assuming Lemma \ref{lem:polyP}, we reduce the problem of proving Theorems \ref{thm:main0} and \ref{thm:main} to proving Theorem \ref{thm:main1}.

\section{Polynomiality and Reciprocity}\label{sec:polyreci}

In this section, we will prove Lemma \ref{lem:polyP} by giving an explicit formula for $p_\tau(\bn,\bbeta).$ We then introduce a new combinatorial object: {\it $(\tau, \bn)$-words}. A special family of these words, denoted by $S_\tau(\bn,\t),$ is counted by a polynomial function that has a reciprocity connection to the polynomial $p_\tau(\bn,\bbeta).$ Using this connection, we reduce our problem (of proving Theorem \ref{thm:main1}) to proving a result on the generating function of $S_\tau(\bn, \t)$ (Theorem \ref{thm:tauwords}). 

Throughout the rest of the paper, we fix $\tau = (t_1, \dots, t_m),$ where $t_i = (I_i, r_i)$, and Fix an integer $\ell \ge \maxv(\tau).$

For any $\bn = (n_1, \dots, n_m) \in \N^m,$ recall that 
\[ \lambda_j(\tau,\bn) =\sum_{i: \ j \in I_i} r_i n_i \quad \text{ and } \quad 
 \olambda_j(\tau,\bn) = \sum_{i: \ j \in I_i} r_i n_i - \sum_{i: \ I_i = \{j\}} n_i.\]
	We often omit the arguments $\tau$ and $\bn$ and only write $\lambda_j$  and $\olambda_j$ if there's no confusion.

	We start by proving a weak version of Lemma \ref{lem:polyP}.
\begin{lem}\label{lem:polyPweak}
Suppose $\tau = (t_1, \dots, t_m)$ satisfying $\maxv(\tau) \le \ell$ and $\bn \in \N^m$. Then for any $\bbeta = (\beta_1, \dots, \beta_\ell)$ satisfying $\beta_j \ge \lambda_j(\tau, \bn)$ for all $j$, (so $G_\tau(\bn)$ is $\bbeta$-allowable), the values $P_\bbeta\left(G_\tau(\bn)\right)$ are given by a multivariate polynomial in $\bbeta$ whose total degree is $|\bn| = n_1 + n_2 + \cdots + n_m,$ which is the number of edges in $G_\tau(\bn).$
\end{lem}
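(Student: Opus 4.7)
The plan is to write $P_\bbeta(G_\tau(\bn))$ as an explicit finite sum of products of multinomial coefficients by decomposing a $\bbeta$-extended ordering according to how edges distribute among the ``gaps'' between consecutive vertices.

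First, I would reduce to gap-by-gap bookkeeping. Because a $\bbeta$-extended ordering must extend the natural order $0<1<2<\cdots$ on vertices, every edge must occupy some position strictly between two consecutive vertices. An original edge of type $t_i=(I_i,r_i)$ can land in any gap indexed by $j\in I_i$, while the $\beta_j-\lambda_j$ unweighted edges produced by the extension are forced to lie in gap $j$. Within a fixed gap, the equivalence relation on orderings identifies permutations of same-type weighted edges with each other and permutations of unweighted edges with each other, but keeps weighted and unweighted edges distinguishable.

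Next, I would record the gap distribution via nonnegative integers $c_{j,i}$ for each pair with $j\in I_i$, counting the number of type-$t_i$ edges placed in gap $j$, subject to $\sum_{j\in I_i}c_{j,i}=n_i$ for each $i$. Writing $A_j=\sum_{i:\,j\in I_i}c_{j,i}$ and $B_j=\beta_j-\lambda_j(\tau,\bn)\ge 0$, the equivalence classes of orderings within gap $j$ are counted by the multinomial coefficient
\[
\binom{A_j+B_j}{B_j,\,(c_{j,i})_{i:\,j\in I_i}}
\;=\;\binom{A_j+B_j}{A_j}\cdot\binom{A_j}{(c_{j,i})_{i:\,j\in I_i}}.
\]
Summing over all admissible $\mathbf{c}=(c_{j,i})$ then yields
\[
P_\bbeta(G_\tau(\bn))
\;=\;\sum_{\mathbf{c}}\prod_{j=1}^{\ell}\binom{A_j+B_j}{A_j}\cdot\binom{A_j}{(c_{j,i})_{i:\,j\in I_i}}.
\]

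Finally, I would read off polynomiality and the degree. For fixed $\mathbf{c}$, $\binom{A_j+B_j}{A_j}=\tfrac{(B_j+1)(B_j+2)\cdots(B_j+A_j)}{A_j!}$ is a polynomial in $\beta_j$ of degree $A_j$ with leading coefficient $1/A_j!$; thus each summand is a polynomial of total degree $\sum_j A_j=|\bn|$, and the finite sum is a polynomial of degree at most $|\bn|$. To pin down the degree exactly, I would extract the top-degree component: the leading monomial of the $\mathbf{c}$-summand is $\prod_j\prod_{i:\,j\in I_i}\beta_j^{c_{j,i}}/c_{j,i}!$, so by the multinomial theorem the entire top-degree part factors as
\[
\prod_{i=1}^m \frac{1}{n_i!}\Bigl(\sum_{j\in I_i}\beta_j\Bigr)^{n_i},
\]
which is manifestly nonzero and homogeneous of degree $|\bn|$. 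There is no real obstacle here; the only care needed is to translate the equivalence relation faithfully into the multinomial coefficient inside each gap, being careful that weight-$1$ weighted edges are not conflated with the added unweighted edges.
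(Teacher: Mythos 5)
Your proof is correct and follows essentially the same approach as the paper: decompose each $\bbeta$-extended ordering according to how the original weighted edges distribute among the gaps between consecutive vertices, obtaining a finite sum over gap distributions of products of binomial/multinomial coefficients (your $\mathbf{c}=(c_{j,i})$ is exactly the paper's $\tau$-compatible contingency table $A=(a_{i,j})$, and your $A_j$ is the paper's column sum $c_j$). The one refinement you add is the explicit computation of the top homogeneous part $\prod_{i=1}^m \frac{1}{n_i!}\bigl(\sum_{j\in I_i}\beta_j\bigr)^{n_i}$ via the multinomial theorem, which cleanly pins down that the total degree is exactly $|\bn|$ rather than merely at most $|\bn|$; the paper simply notes that each summand already has degree $\sum_j c_j = |\bn|$.
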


Before proving Lemma \ref{lem:polyPweak}, we use an example to demonstrate the basic idea of the proof.
	\begin{ex}
		Suppose $m=2$ and $\tau = (t_1, t_2) = ( (I_1, r_1), (I_2, r_2)),$ where $I_1 = \{1\}$, $I_2 = \{1,2\}$ and $r_1, r_2 \in \P.$ Let $\bn=(n_1, n_2) \in \N^2.$ Then 
		\[\lambda_1 = r_1 n_1 + r_2 n_2, \lambda_2 = r_2 n_2, \text{ and } \lambda_j = 0 \text{ for $j \ge 3.$}\]
		
		Thus, for $\bbeta =(\beta_1,\beta_2) \ge (\lambda_1, \lambda_2),$ we have that $G_\tau(\bn)$ is $\bbeta$-allowable. The graph $\ext_\bbeta(G_\tau(\bn))$ has $\beta_1-\lambda_1$ new edges connecting vertices $0$ and $1$ and $\beta_2-\lambda_2$ new edges connecting vertices $1$ and $2.$ Except the $n_2$ edges of type $t_2,$ all the other edges in $\ext_\bbeta\left( G_\tau(\bn) \right)$ have length $1$ and thus their placement between vertices is determined. Hence, we can count the total number of $\bbeta$-extended orderings (up to equivalence) by considering how many edges of type $t_2$ are placed between vertices $0$ and $1$ and how many are placed between vertices $1$ and $2.$ Therefore, we get the formula
		\[ 
		P_\bbeta\left( G_\tau(\bn) \right) = \sum_{a_{2,1} + a_{2,2} = n_2} \binom{(\beta_1-\lambda_1) + n_1 + a_{2,1}}{\beta_1-\lambda_1, n_1, a_{2,1}} \binom{(\beta_2-\lambda_2) + a_{2,2}}{a_{2,2}}. \]
In the above formula $a_{2,j}$ represents the number of edges of type $t_2$ placed between vertices $j-1$ and $j$ for $j=1,2.$ 
\end{ex}

We introduce a terminology for the data $(a_{i,j})$ used in the above example.
\begin{defn}

Let $\bn = (n_1, \dots, n_m) \in \N^m$ and $\bc =(c_1, \dots, c_\ell) \in \N^\ell$ satisfying $\sum_{i=1}^m n_i = \sum_{j=1}^\ell c_j.$ We say an $m \times \ell$ matrix $A = (a_{i,j})$ is a {\it contingency table with margin $(\bn, \bc)$} if all the entries of $A$ are nonnegative, the $i$th row sum of $A$ is $n_i$ and the $j$th column sum of $A$ is $c_j,$ i.e., the following conditions are satisfied:
\[  a_{i,j} \in \N, \quad \forall i,j; \qquad \sum_{j=1}^\ell a_{i,j} = n_i, \quad \forall 1 \le i \le m; \qquad \sum_{i=1}^m a_{i,j} = c_j, \quad \forall 1 \le j \le \ell.\]

Moreover, we say $A=(a_{i,j})$ is {\it $\tau$-compatible} if $a_{i,j} = 0$ unless $j \in I_i$.
\end{defn}

\begin{proof}[Proof of Lemma \ref{lem:polyPweak}]
	Suppose $\bbeta=(\beta_1,\dots,\beta_\ell) \ge (\lambda_1(\tau, \bn),\dots, \lambda_\ell(\tau,\bn))$. Then we have that $G_\tau(\bn)$ is $\bbeta$-allowable.

	There are two kinds of edges in $\ext_\bbeta\left( G_\tau(\bn) \right):$
	\begin{ilist}
	  \itm The original weighted edges in $G_\tau(\bn):$ for each $1 \le i \le m,$ there are $n_i$ edges of type $t_i = (I_i, r_i).$ 
	\itm The new additional unweighted edges: For each $1 \le j \le \ell,$ there are $\beta_j - \lambda_j$ new unweighted edges connecting vertices $j-1$ and $j.$
	\end{ilist}
	
	Given any $\bbeta$-extended ordering $o$ of $G_\tau(\bn),$ if for any $1 \le i \le m$ and any $1 \le j \le \ell$, let $a_{i,j}$ be the number of edges of type $t_i$ appearing between vertices $j-1$ and $j$ in the ordering $o,$ and let $c_j = \sum_{i=1}^m a_{i,j}$ be the number of all the weighted edges appearing between $j-1$ and $j,$ then the matrix $A=(a_{i,j})$ is a $\tau$-compatible contingency table of margin $(\bn, \bc),$ where $\bc = (c_1, \dots, c_\ell).$ We say $A$ is the {\it contingency table corresponding to the ordering $o$.}

	Naturally, we group $\bbeta$-extended orderings by the contingency tables they correspond to.
	Thus, we can count the number of $\bbeta$-extended orderings (up to equivalence) by:
	\begin{align*}
	& P_\bbeta(G_\tau(\bn)) \\
	=& \sum_{\bc} \sum_{A} \# \text{$\bbeta$-extended orderings (up to equivalence) corresponding to the contingency table $A$},
	\end{align*}
	where the first summation is over all the vectors $\bc = (c_1, \dots, c_\ell) \in \N^{\ell}$ satisfying $\sum c_j = \sum n_i,$ and the second summation is over all the $\tau$-compatible contingency table $A$ of margin $(\bn, \bc).$

	Fixing a contingency table $A = (a_{i,j})$ with margin $(\bn, \bc),$ we try to figure out how many ways are there to construct a corresponding $\bbeta$-extended ordering. For each $j,$ the edges between the vertices $j-1$ and $j$ include:
	\begin{itemize}
	\item $a_{i,j}$ edges of type $t_i$ for each $i: 1 \le i \le m.$
	\item $\beta_j - \lambda_j$ unweighted edges.
	\end{itemize}
	Therefore, the number of ways to order the edges between the vertices $j-1$ and $j$ is
	\[ \binom{\beta_j- \lambda_j + c_j}{c_j} \binom{c_j}{a_{1,j}, a_{2,j}, \dots, a_{m,j}}.\]
	Hence, the number of $\bbeta$-extended orderings (up to equivalence) corresponding to the contingency table $A$ is given by
	\[ \prod_{j=1}^\ell \binom{\beta_j- \lambda_j + c_j}{c_j} \binom{c_j}{a_{1,j}, a_{2,j}, \dots, a_{m,j}}.\]
	Therefore,
	\begin{equation}\label{equ:polyP}
	  P_\bbeta(G_\tau(\bn)) =  \sum_{\bc} \sum_{A}\prod_{j=1}^\ell \binom{\beta_j- \lambda_j + c_j}{c_j} \binom{c_j}{a_{1,j}, a_{2,j}, \dots, a_{m,j}}, 
	\end{equation}
	where the first summation is over all the vectors $\bc = (c_1, \dots, c_\ell) \in \N^{\ell}$ satisfying $\sum c_j = \sum n_i,$ and the second summation is over all the $\tau$-compatible contingency tables $A$ of margin $(\bn, \bc).$
	
	Clearly, this is a polynomial in $\bbeta$ whose degree is $\sum_{j=1}^\ell c_j = \sum_{i=1}^m n_i.$
\end{proof}

\begin{proof}[Proof of Lemma \ref{lem:polyP}]
	Suppose $\beta_j \ge \olambda_j$ for each $j.$ It is enough to show that $P_\bbeta(G_\tau(\bn))$ is given by the polynomial defined on the right hand side of \eqref{equ:polyP}, which is clearly true if $\beta_j \ge \lambda_j$ for each $j.$ Assume there exists $j_0$ such that 
	\[ \lambda_{j_0} > \beta_{j_0} \ge \olambda_{j_0}.\]
	Then $G_{\tau}(\bn)$ is not $\bbeta$-allowable, thus $P_\bbeta(G_{\tau}(\bn)) = 0.$ Hence, it is suffices to show that for any pair of $(\bc, A)$ in \eqref{equ:polyP}, we have 
	\begin{equation}\label{equ:binom=0} \binom{\beta_{j_0}- \lambda_{j_0} + c_{j_0}}{c_{j_0}} = 0.
	\end{equation}
	Because $\displaystyle \binom{\beta_{j_0}- \lambda_{j_0} + c_{j_0}}{c_{j_0}} = \frac{1}{c_{j_0}!} (\beta_{j_0}- \lambda_{j_0} + c_{j_0})(\beta_{j_0}- \lambda_{j_0} + c_{j_0}-1) \cdots (\beta_{j_0}- \lambda_{j_0} + 1)$ and $\beta_{j_0}- \lambda_{j_0} + 1 \le 0,$ we only need to show that $\beta_{j_0}- \lambda_{j_0} + c_{j_0} \ge 0$ to conclude \eqref{equ:binom=0}.
	However, since $A=(a_{i,j})$ is $\tau$-compatible, we have $a_{i,j_0} = n_i$ for each $i: \ I_i =\{j\}.$ One sees that 
	\[ c_{j_0} = \sum_{i=1}^m a_{i,j_0} \ge \sum_{i: \ I_i = \{j\}} n_i.\]
Therefore, 
\[  \beta_{j_0}- \lambda_{j_0} + c_{j_0} \ge \beta_{j_0}- \lambda_{j_0} + \sum_{i: \ I_i = \{j\}} n_i =  \beta_{j_0}- \olambda_{j_0}  \ge 0.\]
Then \eqref{equ:binom=0} follows.
\end{proof}

\begin{rem}\label{rem:polyPempty}
  The proof above indicates that the right hand side of \eqref{equ:polyP} defines the polynomial $p_\tau(\bn, \bbeta).$ One notices that if $\bn = \0,$ the right hand side of \eqref{equ:polyP} only has one summation term which is $1$. Hence, $p_\tau(\0, \bbeta) =1.$ This agrees with the fact that $P_\bbeta$ of $G_\tau(\0)$, an empty graph, is $1$, as we stated in Remark \ref{rem:emptygraph}.
\end{rem}

We now introduce $(\tau, \bn)$-words. Recall that we have fixed $\ell \ge \maxv(\tau).$ 
\begin{defn}\label{defn:taun}
	Fix $\bn \in \N^m.$ {\it A $(\tau, \bn)$-word} is an ordered tuple of $\ell$ words $(w_1, \dots, w_\ell)$ satisfying the following conditions:
\begin{alist}
\itm Each $w_j$ is a sequence of letters chosen from $s_0, s_1, \dots, s_m$ where repetition is allowed.
\itm For each $1 \le i \le m,$ the total number of $s_i$ appearing in all the words is $n_i.$
\itm For each $1 \le i \le m,$ the letter $s_i$ can only occur in words $w_j$ if $j \in I_i.$ 
\end{alist}

Given $L_1, \dots, L_\ell \in \N,$ we denote by $S_{\tau}(\bn; L_1, \dots, L_\ell)$ the set of all the $(\tau, \bn)$-words $(w_1, \dots, w_\ell)$ where the length of $w_j$ is $L_j$.

\end{defn}
We usually choose $\ell = \maxv(\tau).$ However, it is not hard to see that there is a natural one-to-one correspondence between the $S_\tau(n; L_1, \dots, L_\ell)$ and the set $S_\tau(n: L_1, \dots, L_\ell, L_{\ell+1}, \dots, L_{\ell'})$ for any $\ell' > \ell,$ and $L_1, \dots, L_{\ell'} \in \N,$ since for any $(w_1, \dots, w_{\ell}, w_{\ell+1}, \dots, w_{\ell'}) \in S_\tau(n: L_1, \dots, L_\ell, L_{\ell+1}, \dots, L_{\ell'})$, the word $w_j$ for $\ell < j \le \ell'$ is just a sequence of letter $s_0$'s. 
Therefore, in some sense the choice of $\ell$ is not important for the general definition of $(\tau, \bn)$-words as long as $\ell \ge \maxv(\tau).$

\begin{lem}
The cardinality of $S_{\tau}(\bn; L_1, \dots, L_\ell)$ is 
	\begin{equation*}
	  \sum_{\bc} \sum_{A}\prod_{j=1}^\ell \binom{L_j}{c_j} \binom{c_j}{a_{1,j}, a_{2,j}, \dots, a_{m,j}},
	\end{equation*}
	where the first summation is over all the vectors $\bc = (c_1, \dots, c_\ell) \in \N^{\ell}$ satisfying $\sum c_j = \sum n_i,$ and the second summation is over all the $\tau$-compatible contingency tables $A$ of margin $(\bn, \bc).$
\end{lem}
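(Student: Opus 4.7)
The plan is to compute the cardinality of $S_\tau(\bn; L_1, \dots, L_\ell)$ by conditioning on, for each $(\tau,\bn)$-word $(w_1,\dots,w_\ell)$, the detailed "letter-occurrence matrix" $A = (a_{i,j})$ where $a_{i,j}$ records the number of times the letter $s_i$ appears in the word $w_j$ (here $1 \le i \le m$ and $1 \le j \le \ell$). This reduces counting $(\tau,\bn)$-words to (a) enumerating the valid matrices $A$, and (b) for each such $A$, counting the number of arrangements realizing it.

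First I would verify that $A$ so defined is a $\tau$-compatible contingency table. The row-sum condition $\sum_j a_{i,j} = n_i$ is immediate from condition (b) of Definition \ref{defn:taun}, and the vanishing $a_{i,j} = 0$ whenever $j \notin I_i$ comes from condition (c). If we set $c_j := \sum_{i=1}^m a_{i,j}$ and $\bc = (c_1, \dots, c_\ell)$, then $A$ is a $\tau$-compatible contingency table of margin $(\bn,\bc)$, and summing either over $i$ and $j$ in any order yields $\sum_j c_j = \sum_i n_i$.

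Next, for fixed $A$ (equivalently, fixed $\bc$ and fixed $\tau$-compatible $A$ with column sums $\bc$), I would count the $(\tau,\bn)$-words that give rise to $A$. For each $j$, the word $w_j$ has length $L_j$, and contains exactly $a_{i,j}$ copies of $s_i$ for $1 \le i \le m$, hence $L_j - c_j$ copies of the remaining letter $s_0$. The number of such words $w_j$ is the multinomial
\[
\binom{L_j}{L_j - c_j,\, a_{1,j},\, a_{2,j},\, \dots,\, a_{m,j}} \;=\; \binom{L_j}{c_j}\binom{c_j}{a_{1,j}, a_{2,j}, \dots, a_{m,j}}.
\]
(If $c_j > L_j$ this quantity is zero, which correctly records that such an $A$ contributes nothing, so there is no need to carry this inequality through the sum.) Since the $w_j$ are chosen independently for different $j$, the total for fixed $A$ is the product over $j$ of the displayed quantity.

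Finally, summing over all valid matrices (grouped by their column-sum vector $\bc$) gives exactly the claimed formula. The only verification one needs is that the correspondence $(w_1,\dots,w_\ell) \leftrightarrow (A, \text{arrangement data})$ is a bijection with the enumerated set, which is straightforward from the definitions. There is no real obstacle here: the argument is a direct double-counting, essentially a restricted/refined version of the formula for $P_\bbeta(G_\tau(\bn))$ derived in Lemma \ref{lem:polyPweak}, with the weights $(\beta_j - \lambda_j) + c_j$ there replaced by raw word-lengths $L_j$.
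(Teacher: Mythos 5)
Your proof is correct and follows exactly the route the paper intends: the paper omits the argument, stating only that it is ``very similar to that of Lemma \ref{lem:polyPweak},'' and your write-up is precisely that parallel argument, conditioning on the letter-occurrence contingency table $A$ and counting words realizing each $A$ by multinomial coefficients.
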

\begin{proof}
	The idea of the proof is very similar to that of Lemma \ref{lem:polyPweak}, thus is omitted.
\end{proof}

We now consider a special family of $(\tau, \bn)$-words.
\begin{defn}\label{defn:S_tau}
Fixing $\bn \in \N^m,$ for any $\t \in \N^\ell,$ we denote by $S_{\tau}(\bn, \t)$ the set of all the $(\tau, \bn)$-words $(w_1, \dots, w_\ell)$ where the length of $w_j$ is $t_j  + \lambda_j,$ i.e., 
\[ S_{\tau}(\bn, \t) := S_{\tau}(\bn; t_1+\lambda_1, t_2+\lambda_2, \dots, t_\ell + \lambda_\ell).\]
\end{defn}

\begin{cor}
The cardinality of $S_{\tau}(\bn,\t)$ is 
\begin{equation}\label{equ:polyS}
	  \sum_{\bc} \sum_{A}\prod_{j=1}^\ell \binom{t_j+\lambda_j}{c_j} \binom{c_j}{a_{1,j}, a_{2,j}, \dots, a_{m,j}},
	\end{equation}
	where the first summation is over all the vectors $\bc = (c_1, \dots, c_\ell) \in \N^{\ell}$ satisfying $\sum c_j = \sum n_i,$ and the second summation is over all the $\tau$-compatible contingency table $A$ of margin $(\bn, \bc).$

Hence, $|S_{\tau}(\bn,\t)|$ is a multivariate polynomial in $\t$ for any fixed $\bn.$
\end{cor}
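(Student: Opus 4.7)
The plan is to observe that this corollary is essentially a direct specialization of the preceding lemma, which already computes $|S_\tau(\bn; L_1,\dots,L_\ell)|$. By Definition \ref{defn:S_tau}, the set $S_\tau(\bn,\t)$ equals $S_\tau(\bn; t_1+\lambda_1,\dots,t_\ell+\lambda_\ell)$. First I would substitute $L_j = t_j + \lambda_j$ into the formula of the preceding lemma, which immediately yields the claimed expression \eqref{equ:polyS}. No new combinatorial argument is required; the counting was already done by grouping $(\tau,\bn)$-words according to their contingency table $A=(a_{i,j})$ recording how many $s_i$'s occur in $w_j$, and then counting the number of ways to place these $\sum_i a_{i,j} = c_j$ weighted letters among the $L_j$ positions of $w_j$ (the factor $\binom{L_j}{c_j}$) and to assign types among them (the multinomial $\binom{c_j}{a_{1,j},\dots,a_{m,j}}$). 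The $\tau$-compatibility condition on $A$ is exactly the translation of condition (c) in Definition \ref{defn:taun}.

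For the polynomiality statement in $\t$, I would argue as follows. For fixed $\bn$, the outer sum over $\bc$ and the inner sum over contingency tables $A$ both range over finite sets, since any $\tau$-compatible contingency table of margin $(\bn,\bc)$ forces $\sum_j c_j = \sum_i n_i$, bounding the possible $\bc$ and $A$. Within each summand, the multinomial factor $\binom{c_j}{a_{1,j},\dots,a_{m,j}}$ depends only on $\bn$ and $A$, while the binomial $\binom{t_j+\lambda_j}{c_j}$ is a polynomial in $t_j$ of degree $c_j$ (recall that $\lambda_j = \lambda_j(\tau,\bn)$ depends only on $\tau,\bn$). Hence each summand is a polynomial in $\t$, and so is the finite sum.

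There is no real obstacle here: the corollary is a direct restatement of the previous lemma under a linear change of the length parameters, combined with the elementary observation that a binomial coefficient $\binom{t+a}{c}$ is polynomial in $t$ once $a$ and $c$ are fixed. The only mild care needed is to notice that the total degree of $|S_\tau(\bn,\t)|$ as a polynomial in $\t$ is $\sum_j c_j^{\max} = |\bn| = \sum_i n_i$, matching the degree of $p_\tau(\bn,\bbeta)$ from Lemma \ref{lem:polyP} and foreshadowing the reciprocity between $|S_\tau(\bn,\t)|$ and $p_\tau(\bn,\bbeta)$ that the next section will exploit.
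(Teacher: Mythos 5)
Your proposal is correct and matches the paper's (unstated) reasoning exactly: the paper presents this as a corollary of the preceding lemma precisely because it follows by substituting $L_j = t_j + \lambda_j$ per Definition \ref{defn:S_tau}, and the polynomiality claim is the elementary observation that each $\binom{t_j+\lambda_j}{c_j}$ is a degree-$c_j$ polynomial in $t_j$ once $\lambda_j$ and $c_j$ are fixed, summed over the finitely many pairs $(\bc, A)$.
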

Note that similar as we discussed in Remark \ref{rem:polyPempty}, we also have that $|S_\tau(\0, \t)| = 1$ for any $\t.$

\begin{defn}
	Fixing $\bn \in \N^m,$ we define $s_{\tau}(\bn, \t)$ to be the multivariate polynomial in $\t$ that computes $|S_{\tau}(\bn, \t)|$ when $\t \in \N^\ell.$ Since $s_{\tau}(\bn, \t)$ is a polynomial, we can extend it to $\t \in \Z^\ell.$

	We also define the generating function of $s_{\tau}(\bn, \t):$
\[ \cS_{\tau, \t}(\x) :=\sum_{\bn \in \N^m} s_\tau(\bn, \t) \x^\bn = 1 + \sum_{\bn \in (\N^m)^*} s_\tau(\bn, \t) \x^\bn.\]
\end{defn}

We now state the reciprocity formulas for $p_\tau(\bn, \bbeta)$ and $s_\tau(\bn, \t)$ and their generating functions, recalling that $\1$ denotes an all-one vector $(1, 1, \dots, 1).$

\begin{lem}[Reciprocity] 
	For any fixed $\bn \in \N^m,$ 
	\begin{equation}\label{equ:recip}
		p_{\tau}(\bn, \bbeta) = (-1)^{\bn} s_{\tau}(\bn, -\bbeta-\1).
	\end{equation}
Hence,
\begin{equation}\label{equ:GenRecip}
	\sP_{\tau, \bbeta}(\x) = \cS_{\tau, -\bbeta-\1}(-\x).
\end{equation}
\end{lem}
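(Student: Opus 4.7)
The plan is to derive \eqref{equ:recip} term-by-term from the explicit summation formulas \eqref{equ:polyP} for $p_\tau(\bn,\bbeta)$ and \eqref{equ:polyS} for $s_\tau(\bn,\t)$, which have exactly the same outer summations (over $\bc$ and over $\tau$-compatible contingency tables $A$ of margin $(\bn,\bc)$) and share the multinomial factors $\binom{c_j}{a_{1,j},\dots,a_{m,j}}$. Thus the whole claim reduces to matching the $j$-th binomial factor on each side after substituting $\t = -\bbeta - \1$. After this substitution, $t_j + \lambda_j = -\beta_j - 1 + \lambda_j$, and the standard upper negation identity
\[
\binom{-r}{k} \;=\; (-1)^k \binom{r+k-1}{k}
\]
applied with $r = \beta_j + 1 - \lambda_j$ (so $-r = -\beta_j - 1 + \lambda_j$) and $k = c_j$ gives
\[
\binom{t_j + \lambda_j}{c_j} \;=\; \binom{-\beta_j - 1 + \lambda_j}{c_j} \;=\; (-1)^{c_j}\binom{\beta_j - \lambda_j + c_j}{c_j}.
\]

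First I would plug this into \eqref{equ:polyS} evaluated at $\t = -\bbeta - \1$, pulling the sign $\prod_{j=1}^{\ell}(-1)^{c_j} = (-1)^{\sum_j c_j}$ outside the product. Since every contingency table in the inner sum has column sums $c_j$ whose total equals the row-sum total $\sum_i n_i = |\bn|$, we have $\sum_j c_j = |\bn|$, so $\prod_j (-1)^{c_j} = (-1)^{\bn}$ \emph{uniformly} across the entire double sum. Pulling this constant out of the sum and comparing with \eqref{equ:polyP} immediately yields $s_\tau(\bn, -\bbeta - \1) = (-1)^{\bn}\, p_\tau(\bn,\bbeta)$, which is \eqref{equ:recip}.

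For the generating-function version \eqref{equ:GenRecip}, I would simply substitute into the definition of $\cS_{\tau,\t}(\x)$: using $(-\x)^{\bn} = (-1)^{\bn}\x^{\bn}$ and keeping in mind $s_\tau(\0,\t) = 1 = p_\tau(\0,\bbeta)$ (cf.\ Remark \ref{rem:polyPempty}),
\[
\cS_{\tau,-\bbeta-\1}(-\x) \;=\; 1 + \sum_{\bn \in (\N^m)^*} (-1)^{\bn}\, s_\tau(\bn,-\bbeta-\1)\, \x^{\bn} \;=\; 1 + \sum_{\bn \in (\N^m)^*} p_\tau(\bn,\bbeta)\,\x^{\bn} \;=\; \sP_{\tau,\bbeta}(\x),
\]
where the middle equality is exactly \eqref{equ:recip}. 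There is no serious obstacle: the only thing to be careful about is that the total power of $-1$ across all $\ell$ factors equals $(-1)^{\bn}$ independently of the particular choice of $\bc$ and $A$, which is precisely guaranteed by the margin condition $\sum_j c_j = \sum_i n_i$ in the definition of contingency tables.
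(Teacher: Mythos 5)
Your proof is correct and is essentially the same as the paper's: both compare \eqref{equ:polyP} and \eqref{equ:polyS} term-by-term, apply the upper-negation identity $\binom{-r}{k}=(-1)^k\binom{r+k-1}{k}$ to each factor $\binom{t_j+\lambda_j}{c_j}$ with $\t=-\bbeta-\1$, and observe that the margin condition $\sum_j c_j = \sum_i n_i$ makes the accumulated sign $(-1)^{\bn}$ uniform across the sum. The generating-function step is also handled the same way.
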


\begin{proof}
  Note that $p_{\tau}(\bn, \bbeta)$ is defined by \eqref{equ:polyP} and $s_{\tau}(\bn,\t)$ is defined by \eqref{equ:polyS}. Hence, it is enough to show that for any $\bc,$ $A$ and $\bbeta,$ we have
  \[ \prod_{j=1}^\ell \binom{\beta_j- \lambda_j + c_j}{c_j} = (-1)^{\bn} \prod_{j=1}^\ell \binom{(-\beta_j-1)+\lambda_j}{c_j}.\]
  Applying the reciprocity formula (cf. Formula (1.21) in \cite{stanleyec1ed2})
 \[ \binom{-x}{n} = (-1)^n \binom{x+n-1}{n},\]
 we get
 \[ \binom{\beta_j- \lambda_j + c_j}{c_j} =  (-1)^{c_j}  \binom{-\beta_j+\lambda_j-c_j+c_j-1}{c_j}.\]
 Hence,
 \[ \prod_{j=1}^\ell \binom{\beta_j- \lambda_j + c_j}{c_j} =  \prod_{j=1}^\ell (-1)^{c_j} \binom{(-\beta_j-1)+\lambda_j}{c_j}.\]
 However, $\sum_{j=1}^\ell c_j = \sum_{i=1}^m n_i.$ Thus, Equation \eqref{equ:recip} follows. We use \eqref{equ:recip} to prove \eqref{equ:GenRecip}:

 \begin{align*}
	 \sP_{\tau, \bbeta}(\x) =& \sum_{\bn \in \N^m} p_\tau(\bn, \bbeta) \x^{\bn} = \sum_{\bn \in \N^m} (-1)^{\bn} s_\tau(\bn, -\bbeta-\1) \x^{\bn}	\\
	 =& \sum_{\bn \in \N^m} s_\tau(\bn, -\bbeta-\1) (-\x)^{\bn} = \cS_{\tau, -\bbeta-\1}(-\x) 
	\end{align*}
\end{proof}

By \eqref{equ:GenRecip}, one sees that Theorem \ref{thm:main1} is equivalent to the following theorem: 
\begin{thm}\label{thm:tauwords}
  There exists formal power series $F_{\tau}^{(1)}(\x), F_{\tau}^{(2)}(\x), \dots, F_{\tau}^{(\ell)}(\x)$ and $H_{\tau}(\x)$ with $F_{\tau}^{(j)}(\0) =1$ for each $j$ and $H_\tau(\0)=1$ such that for any $\t \in \Z^\ell,$ 
\[ S_{\tau, \t} (\x) = \left( F_{\tau}^{(1)}(\x) \right)^{t_1} \cdots  \left( F_{\tau}^{(\ell)}(\x) \right)^{t_\ell} \cdot H_{\tau}(\x).\]
\end{thm}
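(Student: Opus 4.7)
The plan is to prove Theorem \ref{thm:tauwords} by a first--return style decomposition of $(\tau,\bn)$-words into irreducible components, along the lines the author announces for Sections \ref{sec:decomp}--\ref{sec:finalproof}. The content of the theorem is a rigidity statement: after taking the logarithm it reads
\[\log \cS_{\tau, \t}(\x) \;=\; \sum_{j=1}^\ell t_j \log F^{(j)}_\tau(\x) + \log H_\tau(\x),\]
so what must actually be shown is that the coefficient of each monomial $\x^{\bn}$ in $\log \cS_{\tau, \t}(\x)$ is an affine--linear function of $\t$, even though $s_\tau(\bn, \t)$ itself is a polynomial in $\t$ of degree $|\bn|$.

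First, I would introduce a height function on prefixes of $(\tau, \bn)$-words, taking values in a subset of $\Z^\ell$, that records for each coordinate $j$ the cumulative discrepancy between letters placed so far in $w_j$ and the ``mandatory'' count $\lambda_j$ determined by $\tau$. Declare a $(\tau, \bn')$-word to be \emph{irreducible} if its height function first returns to the origin only at the terminal position, and let its \emph{type} $j \in \{1, \dots, \ell\}$ be the distinguished coordinate along which this return happens. Set $F^{(j)}_\tau(\x) = 1 + \sum_{\bn' \in (\N^m)^*} f^{(j)}_\tau(\bn') \x^{\bn'}$, where $f^{(j)}_\tau(\bn')$ counts irreducible $(\tau, \bn')$-words of type $j$, and let $H_\tau(\x)$ count the unique ``tail'' segment of a $(\tau, \bn)$-word that is left after all irreducible pieces have been stripped.

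The crux is a unique--decomposition lemma: every $(\tau, \bn)$-word in $S_\tau(\bn, \t)$ admits a unique factorization as a concatenation of $t_1$ type-$1$ irreducible pieces, $t_2$ type-$2$ irreducible pieces, $\dots$, $t_\ell$ type-$\ell$ irreducible pieces, and a final tail, in a canonical order dictated by a prescribed interleaving of the words $w_1, \dots, w_\ell$. Summing over all ways to partition $\bn$ among the pieces translates the decomposition directly into the product formula
\[\cS_{\tau, \t}(\x) \;=\; \prod_{j=1}^\ell F^{(j)}_\tau(\x)^{t_j}\cdot H_\tau(\x),\]
from which the theorem follows. Extension from $\t \in \N^\ell$ to $\t \in \Z^\ell$ is automatic once both sides are recognized as polynomial in $\t$ on the coefficient level.

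The main obstacle is engineering the height function and the interleaving procedure so that (i) the first-return decomposition truly exists and is unique, (ii) the type of each irreducible piece is well-defined independently of the neighbouring pieces, and (iii) the number of type-$j$ pieces in any word in $S_\tau(\bn, \t)$ is exactly $t_j$, uniformly in $\bn$. Because a $(\tau, \bn)$-word is a tuple of $\ell$ separate strings rather than one concatenated string, a canonical reading order across the whole tuple must be prescribed, and I expect this to be the step where the detailed combinatorics of the height function does all the work; once the reading order is fixed so that first-return reduction becomes one-dimensional, the rest of the argument is standard concatenation-to-product book-keeping.
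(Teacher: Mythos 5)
Your high-level architecture matches the paper's: a height function, a notion of irreducibility, a unique decomposition of each word into irreducible pieces plus a tail, the resulting product formula for $\cS_{\tau,\t}(\x)$ over $\t\in\N^\ell$, and extension to $\t\in\Z^\ell$ by observing both sides have coefficients polynomial in $\t$. That skeleton is exactly Sections~\ref{sec:decomp}--\ref{sec:finalproof}.

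However, the specific plan you sketch for the crux diverges from the paper and, as stated, would not go through. First, the paper does \emph{not} impose a canonical total reading order on the tuple $(w_1,\dots,w_\ell)$ to reduce to a one-dimensional first-return argument; it works directly with the partial order of initial subwords (each $u_j$ a prefix of $w_j$, with lengths varying independently) and proves a stronger statement via a greedy procedure (the Find-Irreducible-Subword algorithm): for any target $\bh\in(\Z_{\le 0})^\ell$ with $h(\w)\le\bh$, there is a \emph{unique} irreducible initial subword of height exactly $\bh$ (Theorem~\ref{thm:decomp}). Your ``first return to origin'' framing of irreducibility is not the right notion here --- the relevant building blocks are irreducible words of height $-\be_j$, not words returning to height $\0$, and the paper's irreducibility is ``no proper initial subword of the \emph{same} height.'' Second, your ``intrinsic type $j$'' attached to each irreducible piece is not well defined in advance; in the paper the decomposition is taken relative to a \emph{prescribed} weak composition of $\t$ (Lemma~\ref{lem:decompirr}), so there is nothing to prove about the multiset of ``types'' agreeing with $\t$ --- it holds by construction when you decompose along $(\underbrace{\be_1,\dots,\be_1}_{t_1},\dots,\underbrace{\be_\ell,\dots,\be_\ell}_{t_\ell})$, and the generating-function identity follows since multiplication is commutative. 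Third, the tail $H_\tau(\x)=\cS_{\tau,\0}(\x)$ arises by first peeling off the unique irreducible initial subword of height $-\t$ and observing the remainder is balanced (Lemma~\ref{lem:decompQ}); it is not the residue of stripping irreducibles under a reading order. Your instinct that ``the detailed combinatorics of the height function does all the work'' is correct, but the work is the uniqueness-of-irreducible-initial-subword theorem, not a linearization; the linearization you propose is a genuine gap that the paper sidesteps entirely.
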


Note that the functions $F_\tau^{(j)}(\x)$'s and $H_{\tau}(\x)$ in Theorem \ref{thm:tauwords} and Theorem \ref{thm:main1} are the same.

\section{Height function and decomposition of $(\tau, \bn)$-words}\label{sec:decomp}
We have reduced the problem of proving our main theorems to proving Theorem \ref{thm:tauwords}.
In this section, we will introduce two important concepts for $(\tau, \bn)$-words: height function and irreducibility, using which we prove a result on decomposing $(\tau, \bn)$-words (Theorem \ref{thm:decomp}). An analysis of the height function also leads to an alternative definition for words in $S_\tau(\bn,\t)$ (Corollary \ref{cor:heightQ}). These results will be used in the next section to prove Theorem \ref{thm:tauwords}.


\begin{defn}
	Given a $(\tau, \bn)$-word $\w = (w_1, \dots, w_\ell),$ we associate a {\it height function} with it:
	\[ h(\w) = (h_1, \dots, h_\ell) = (h_1(\w), \dots, h_\ell(\w)),\]
	where $h_j=h_j(\w)$ is defined by
	\begin{align*}
	h_j = h_j(\w) = & \quad (-1) \cdot  \# \text{(letter $s_0$'s in $w_j$)} \\
	& + \sum_{i: j \in I_i} (r_i - 1) \cdot \# \text{(letter $s_i$'s in $w_j$)} \\
		& + \sum_{i: j \in I_i} r_i \cdot \# \text{(letter $s_i$'s not in $w_j$)}. 
	\end{align*}
\end{defn}
Another way to look at the height function is that each $s_0$ appearing in $w_j$ contributes $-1$ to the height number $h_j,$ and for each $i: j \in I_i$ any letter $s_i$ appearing in $w_j$ contributes $(r_i-1)$ to $h_j$ and any letter $s_i$ appearing in words other than $w_j$ contributes $r_i$ to $h_j.$ (Note that if $s_i$ appears in $w_j$, we must have that $j \in I_i.$)

\begin{defn}
  Let $\u = (u_1, \dots, u_\ell)$ and $\v = (v_1, \dots, v_\ell)$ be a $(\tau, \bn_1)$-word and a $(\tau, \bn_2)$-word respectively. The {\it concatenation} of $\u$ and $\v$ is defined to be
  \[ \u \circ \v = (u_1 v_1, \dots, u_\ell v_\ell),\]
  which is clearly a $(\tau, \bn_1+\bn_2)$-word.

  Suppose $\w = \u \circ \v.$ We say $\u$ is an {\it initial subword} of $\w.$
\end{defn}

We give an explicit description for the height function of words in $S_\tau(\bn; L_1, \dots, L_\ell),$ followed by one basic property of the height function and an alternative defintion of $S_\tau(\bn, \t).$

\begin{lem}\label{lem:height}
Let $\w = (w_1, \dots, w_\ell) \in S_\tau(\bn; L_1, \dots, L_\ell)$. Then
\[ h_j(\w) = \lambda_j(\tau, \bn) - L_j, \quad \forall 1 \le j \le \ell.\]
\end{lem}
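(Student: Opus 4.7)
The plan is to prove the identity by a direct bookkeeping argument: set up notation for the multiplicity of each letter in each word of $\w$, substitute into the definition of $h_j(\w)$, and simplify using the defining constraints of $S_\tau(\bn; L_1,\dots,L_\ell)$.

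More concretely, for each $1\le j\le \ell$ and each $0\le i\le m$ let $a_{i,j}$ denote the number of occurrences of $s_i$ in $w_j$. Condition (b) in Definition \ref{defn:taun} gives $\sum_{j=1}^\ell a_{i,j}=n_i$ for each $1\le i\le m$, condition (c) gives $a_{i,j}=0$ whenever $j\notin I_i$ (for $i\ge 1$), and the length condition defining $S_\tau(\bn;L_1,\dots,L_\ell)$ gives $\sum_{i=0}^m a_{i,j}=L_j$ for each $j$.

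Starting from the defining formula for $h_j(\w)$, I would rewrite the three contributions as
\begin{align*}
h_j(\w) &= -a_{0,j}+\sum_{i:\,j\in I_i}\bigl((r_i-1)a_{i,j}+r_i(n_i-a_{i,j})\bigr)\\
&= -a_{0,j}+\sum_{i:\,j\in I_i}\bigl(r_in_i-a_{i,j}\bigr).
\end{align*}
Recognizing $\sum_{i:\,j\in I_i}r_in_i=\lambda_j(\tau,\bn)$ by definition, and using $a_{i,j}=0$ for $i\ge 1$ with $j\notin I_i$ to replace $\sum_{i:\,j\in I_i}a_{i,j}$ by $\sum_{i=1}^m a_{i,j}$, this collapses to
\[
h_j(\w)=\lambda_j(\tau,\bn)-\Bigl(a_{0,j}+\sum_{i=1}^m a_{i,j}\Bigr)=\lambda_j(\tau,\bn)-L_j,
\]
which is exactly the claim.

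There is no real obstacle here; the content of the lemma is essentially the observation that the weight $r_i$ associated to a letter $s_i$ in $w_j$ is the ``same'' contribution whether the letter sits in $w_j$ (contributing $r_i-1$ plus $1$ to $L_j$) or outside $w_j$ (contributing $r_i$ and nothing to $L_j$), and the $-1$ assigned to $s_0$ simply cancels against its contribution to $L_j$. The only care needed is in matching the indexing convention so that restricting the sum over $i$ with $j\in I_i$ is compatible with summing over all $i$ once the support condition on $a_{i,j}$ is invoked.
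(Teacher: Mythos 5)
Your proof is correct and follows essentially the same route as the paper's: both set up the occurrence counts $a_{i,j}$ and substitute directly into the definition of $h_j(\w)$, simplifying via the length constraint; your only cosmetic difference is introducing $a_{0,j}$ as a separate symbol instead of writing it as $L_j-\sum_i a_{i,j}$.
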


\begin{proof}
  Suppose $1 \le j \le \ell.$ Let $a_{i,j}$ be the number of letters $s_i$ appearing in the word $w_j$, for $1 \le i \le m$. Thus, there are $L_j - \sum_{i=1}^m a_{i,j} = L_j - \sum_{i: j \in I_i} a_{i,j}$ of $s_0$ appearing in $w_j$ and $n_i - a_{i,j}$ of $s_i$ not appearing in $w_j.$
	Then 
	\begin{align*}
		h_j(\w) = & \quad (-1) \cdot (L_j - \sum_i a_{i,j}) +   \sum_i (r_i - 1) \cdot a_{i,j}  + \sum_i r_i \cdot \left(n_i - a_{i,j}\right) \\
		= & - L_j + \sum_i a_{i,j} + \sum_i r_i a_{i,j} - \sum_i a_{i,j} + \sum_i r_i n_i - \sum_i r_i a_{i,j} \\
		=& \lambda_j(\tau, \bn) - L_j,
	\end{align*}
	where all the summations in the above equation are over all $i$ such that $j \in I_i$. 
\end{proof}

\begin{cor}\label{cor:additivity}
Suppose $\w = \u \circ \v.$ Then
\[ h(\w) = h(\u) + h(\v).\]
\end{cor}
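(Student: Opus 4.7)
The plan is to deduce this directly from Lemma \ref{lem:height}, using the linearity of $\lambda_j(\tau, \bn)$ in $\bn$.

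First I would set up notation: suppose $\u$ is a $(\tau, \bn_1)$-word of lengths $(L_1^{(1)}, \dots, L_\ell^{(1)})$, i.e. $\u \in S_\tau(\bn_1; L_1^{(1)}, \dots, L_\ell^{(1)})$, and similarly $\v \in S_\tau(\bn_2; L_1^{(2)}, \dots, L_\ell^{(2)})$. Then by the definition of concatenation, $\w = \u \circ \v$ lies in $S_\tau(\bn_1+\bn_2; L_1^{(1)}+L_1^{(2)}, \dots, L_\ell^{(1)}+L_\ell^{(2)})$, since the $j$th component of $\w$ has length $L_j^{(1)} + L_j^{(2)}$ and the multiplicities of each letter $s_i$ across all components add.

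Next I would observe that the statistic $\lambda_j(\tau, \bn) = \sum_{i : j \in I_i} r_i n_i$ is linear in $\bn$, so
\[
\lambda_j(\tau, \bn_1 + \bn_2) = \lambda_j(\tau, \bn_1) + \lambda_j(\tau, \bn_2).
\]
Applying Lemma \ref{lem:height} to each of $\u, \v, \w$ then gives, for each $j$,
\[
h_j(\w) = \lambda_j(\tau, \bn_1+\bn_2) - \bigl(L_j^{(1)} + L_j^{(2)}\bigr) = \bigl(\lambda_j(\tau, \bn_1) - L_j^{(1)}\bigr) + \bigl(\lambda_j(\tau, \bn_2) - L_j^{(2)}\bigr) = h_j(\u) + h_j(\v),
\]
which is exactly the desired identity. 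There is no real obstacle here; the only subtle point is simply invoking the additivity of $\lambda_j$ in the $\bn$ argument, which is immediate from its definition. Alternatively, one could prove the statement directly from the three-part definition of $h_j$ by separately counting occurrences of $s_0$ and of each $s_i$ in $w_j = u_j v_j$ and summing contributions, but the route through Lemma \ref{lem:height} is cleaner and uses a result already established.
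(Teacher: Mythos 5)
Your proof is correct and follows essentially the same route as the paper: invoke Lemma \ref{lem:height} together with the additivity of $\lambda_j(\tau,\bn)$ in $\bn$, which is exactly what the paper does, and you even note the same alternative direct-from-definition argument that the paper mentions parenthetically.
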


\begin{proof}
This follows from Lemma \ref{lem:height} and the fact that
\[ \lambda_j(\tau, \bn_1+\bn_2) = \lambda_j(\tau, \bn_1) + \lambda_j(\tau, \bn_2).\]
(It is also possible to prove the corollary directly using the definition of the height function.)
\end{proof}

The following Corollary, which gives an alternative defintion for $S_\tau(\bn,\t),$ is an immediate consequence of Lemma \ref{lem:height}.
\begin{cor}\label{cor:heightQ}
Suppose $\w = (w_1, \dots, w_\ell)$ is a $(\tau, \bn)$-word and let $\t \in \N^\ell.$ Then $\w \in S_\tau(\bn, \t)$ if and only if
$h(\w) = -\t.$
\end{cor}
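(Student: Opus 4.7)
The plan is to directly unwind the two definitions and invoke Lemma \ref{lem:height}, since the corollary is essentially a rewriting. By Definition \ref{defn:S_tau}, a $(\tau,\bn)$-word $\w=(w_1,\dots,w_\ell)$ belongs to $S_\tau(\bn,\t)$ precisely when the length $L_j$ of each component word $w_j$ equals $t_j+\lambda_j(\tau,\bn)$. The height function $h(\w)$, on the other hand, is defined for every $(\tau,\bn)$-word without reference to any target length vector, so the task is just to translate between ``prescribed lengths'' and ``prescribed heights.''

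For the forward implication, I would assume $\w\in S_\tau(\bn,\t)$, so $\w\in S_\tau(\bn;t_1+\lambda_1,\dots,t_\ell+\lambda_\ell)$. Applying Lemma \ref{lem:height} with $L_j=t_j+\lambda_j$ gives $h_j(\w)=\lambda_j-L_j=-t_j$ for every $j$, hence $h(\w)=-\t$.

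For the converse, start from an arbitrary $(\tau,\bn)$-word $\w$ and let $L_j$ denote the length of $w_j$, so $\w\in S_\tau(\bn;L_1,\dots,L_\ell)$. Lemma \ref{lem:height} yields $h_j(\w)=\lambda_j-L_j$. If moreover $h(\w)=-\t$, then $L_j=\lambda_j+t_j$ for each $j$, and the condition $\t\in\N^\ell$ guarantees these lengths are nonnegative integers consistent with the original $L_j\in\N$. Thus $\w\in S_\tau(\bn;t_1+\lambda_1,\dots,t_\ell+\lambda_\ell)=S_\tau(\bn,\t)$, completing the equivalence.

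There is really no obstacle here: the only step with any content is Lemma \ref{lem:height}, which has already been proved. The proof is just a two-line substitution in both directions, and no new combinatorial input is needed.
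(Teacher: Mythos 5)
Your proof is correct and takes the same route the paper intends: the paper states the corollary is ``an immediate consequence of Lemma \ref{lem:height},'' and your two-direction substitution using $h_j(\w)=\lambda_j-L_j$ together with Definition \ref{defn:S_tau} is exactly that immediate argument spelled out.
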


The above corollary tells us that the union of $S_\tau(\bn, \t)$ over all $\t \in \N^\ell$ is the set of all the $(\tau, \bn)$-words with height in $\left( \Z_{\le 0} \right)^\ell.$ For the rest of the section, we will focus more on the height of $(\tau, \bn)$-words. Thus, we will distinguish words by their height.  

\begin{defn}
	We say a $(\tau,\bn)$-word $\w$ has {\it non-positive height} if $\bh \in \left( \Z_{\le 0} \right)^\ell.$ 

	A $(\tau, \bn)$-word $\w= (w_1, \dots, w_\ell)$ is {\it balanced} if $h(\w) = \0.$


\end{defn}

\begin{rem}\label{rem:bal}
%
By Corollary \ref{cor:heightQ}, the set $S_\tau(\bn, \0)$ consists of all the balanced $(\tau,\bn)$-words. 
\end{rem}

\begin{defn}\label{defn:irred}
  Let $\t \in \N^\ell$ and $\bh = -\t.$ (So $\bh \in \left( \Z_{\le 0} \right)^\ell.$) Suppose the word $\w$ has height $\bh.$ (This means that $\w \in S_\tau(\bn, \t)$ for some $\bn \in \N^m.$)
  We say $\w$ is {\it irreducible} if it does not have a proper initial subword that also has height $\bh.$

  We denote by $S_\tau^{\irr}(\bn, \t)$ the set of all $(\tau, \bn)$-words that have height $\bh=-\t$ and are irreducible. (By Corollary \ref{cor:heightQ}, $S_\tau^{\irr}(\bn, \t)$ is a subset of $S_\tau(\bn,\t).$)
\end{defn}

\begin{rem}
Note that the only irreducible balanced words is the empty word $(\emptyset, \dots, \emptyset).$ Hence, $|S_\tau^\irr(\bn, \0)|$ is $1$ if $\bn = \0$ and is $0$ otherwise.

\end{rem}

The main result of this section is the following theorem.
\begin{thm}\label{thm:decomp}
Suppose $\bh = (h_1, \dots, h_\ell) \in \left( \Z_{\le 0} \right)^\ell$ is a non-positive height vector and $\w = (w_1, \dots, w_\ell)$ is a $(\tau, \bn)$-word satisfying $h(\w) \le \bh.$
Then $\w$ has a unique initial subword $\u$ that has height $\bh$ and is irreducible.

\end{thm}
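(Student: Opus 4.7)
My plan is to prove existence and uniqueness separately, relying on two features of the height function: its additivity over letters (each letter contributes a fixed vector to $h(\u)$ determined only by its type and position) and the formula $h_j(\u') = \lambda_j(\tau, \bn') - L'_j$ from Lemma \ref{lem:height}, which makes $h_j$ monotone nondecreasing in $\bn'$ for fixed lengths.

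For existence, I would consider the finite nonempty set $T' = \{\u' : \u' \text{ an initial subword of } \w,\ h(\u') \ge \bh\}$ and select $\u' \in T'$ of maximum total length $\sum_j L'_j$, then argue $h(\u') = \bh$. Indeed, if $h_{j_0}(\u') > h_{j_0}(\bh)$ for some $j_0$, one of two things happens: either $u'_{j_0} = w_{j_0}$, in which case writing $\bn'$ and $\bn$ for the letter-count vectors of $\u'$ and $\w$, the inequality $\bn' \le \bn$ together with monotonicity of $\lambda_{j_0}$ gives $h_{j_0}(\u') \le h_{j_0}(\w) \le h_{j_0}(\bh)$, contradiction; or $u'_{j_0}$ is a strict prefix of $w_{j_0}$ and we may append its next letter. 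Appending preserves $h \ge \bh$ (an $s_0$-letter decreases $h_{j_0}$ by exactly $1$, while any $s_i$-letter with $i \ge 1$ contributes $r_i - 1 \ge 0$ at $j_0$ and $r_i \ge 1$ at other positions of $I_i$) and strictly increases total length, again contradicting maximality. Taking the shortest initial subword of $\w$ with $h = \bh$ then yields an irreducible $\u$, since any proper initial subword has strictly smaller total length.

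For uniqueness, I would suppose $\u_1 \ne \u_2$ are both irreducible with $h = \bh$ and analyze their componentwise meet $\u^\wedge$ and join $\u^\vee$ in the poset of initial subwords of $\w$. Additivity of the height function---every letter of $\u^\vee$ lies in exactly one of $\u_1, \u_2$ and every letter of $\u^\wedge$ lies in both---yields $h(\u^\vee) + h(\u^\wedge) = h(\u_1) + h(\u_2) = 2\bh$. A coordinate-wise argument using monotonicity of $\lambda_j$ then shows $h(\u^\wedge) \le \bh$ (whence $h(\u^\vee) \ge \bh$). Since $\u^\wedge$ is a proper initial subword of both $\u_i$ (else one contains the other, contradicting irreducibility), equality $h(\u^\wedge) = \bh$ would already contradict irreducibility of $\u_1$; so $h(\u^\wedge) < \bh$ strictly in some coordinate. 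I would then interleave the growth paths $\u^\wedge \to \u_1$ (adding letters at positions $J_2 := \{j : L^{(1)}_j > L^{(2)}_j\}$) and $\u^\wedge \to \u_2$ (adding letters at $J_1$), iterating the existence algorithm inside the sub-interval $[\u^\wedge, \u^\vee]$ of the poset to produce a proper initial subword of $\u_1$ or $\u_2$ with $h = \bh$, contradicting irreducibility.

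The main obstacle is this final interleaving step: an interleaved prefix automatically lies inside $\u^\vee$ but need not lie inside $\u_1$ or $\u_2$ individually. I expect to resolve it by a diamond-style confluence argument exploiting that $\u_1 \setminus \u^\wedge$ and $\u_2 \setminus \u^\wedge$ are supported at disjoint position sets ($J_2$ and $J_1$, respectively) and contribute identical height vectors $\Delta = \bh - h(\u^\wedge) \ge \0$, combined with a careful minimum-total-length selection inside $[\u^\wedge, \u^\vee]$.
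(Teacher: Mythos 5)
Your existence argument is correct and closely parallels the paper's: the paper formalizes the same idea as an explicit algorithm (FIS) that starts from the empty word and repeatedly appends the next letter of $w_j$ at some index $j$ where $h_j(\u) > h_j$, while you phrase it as taking a maximal-total-length element of $T' = \{\u' : h(\u') \ge \bh\}$. The two key facts you use --- that if $h_j(\u') > h_j$ then $u'_j \neq w_j$, and that appending one letter preserves $h \ge \bh$ --- are exactly Lemmas \ref{lem:stepb} and \ref{lem:stepa} of the paper. (Your observation in Case 1 is precisely Lemma \ref{lem:whole}.) Taking a minimum-total-length element of $T := \{\u' : h(\u') = \bh\}$ then yields an irreducible word, as you say.

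Your uniqueness argument, however, has a genuine gap, which you yourself flag. The meet/join computations are sound: $h(\u^\wedge) + h(\u^\vee) = 2\bh$ follows from additivity, and $h(\u^\wedge) \le \bh$ follows coordinate-wise from Lemma \ref{lem:whole}-style monotonicity. But the interleaving step is not just a technical inconvenience: since $h(\u^\wedge)$ sits \emph{below} $\bh$ while the algorithm's invariant keeps you at or \emph{above} $\bh$, you cannot restart the growth argument from $\u^\wedge$. The paper closes this gap with a different argument that you may wish to emulate: Lemma \ref{lem:iqb}(i) shows that the word $\u^o$ produced by the algorithm is an initial subword of \emph{every} $\u'$ with $h(\u') = \bh$. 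The proof tracks the first moment $i$ at which the algorithm's partial output $\u^{(i)}$ is a prefix of $\u'$ but $\u^{(i+1)}$ is not; at that moment the chosen index $j$ satisfies $u^{(i)}_j = u'_j$, so Lemma \ref{lem:whole} gives $h_j(\u^{(i)}) \le h_j(\u') = h_j$, contradicting $h_j(\u^{(i)}) > h_j$. This directly yields both irreducibility (nothing smaller than $\u^o$ reaches $\bh$) and uniqueness (any irreducible $\u'$ with height $\bh$ has $\u^o$ as a prefix, hence equals $\u^o$), and in particular implies the confluence you were hoping for --- the algorithm's output is independent of the index choices made --- without ever needing to descend to $\u^\wedge$. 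So the two approaches diverge precisely at uniqueness: you attempt a lattice-theoretic meet/join argument and get stuck at confluence, whereas the paper proves a global minimality statement for the algorithm's output and deduces everything from it.
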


We will prove Theorem \ref{thm:decomp} in the rest of the section. The main idea is to describe an algorithm that finds an initial subword $\u$ that has height $\bh$ and then show $\u$ is irreducible. 

\vspace*{2mm}
\noindent \textbf{Algorithm: Find-Irreducible-Subword (FIS)} 

\noindent Input: $\w = (w_1, \dots, w_\ell)$ a $(\tau,\bn)$-word and $\bh = (h_1, \dots, h_\ell) \in \left( \Z_{\le 0} \right)^\ell$ satisfying $h(\w) \le \bh.$ i.e., $h_j(\w) \le h_j$ for each $1 \le j \le \ell.$ 
\begin{enumerate}
	\item Let $\u = (u_1,\dots, u_\ell)= (\emptyset, \dots, \emptyset)$ be the empty word and let $\v =(v_1, \dots, v_\ell) = \w.$ (Clearly $h(\u) = (0, 0, \dots, 0).$) 
	\item While $h(\u) \neq \bh$:
		\begin{enumerate}
		\item Pick an index $j: 1\le j \le \ell$ such that $h_j(\u) > h_j.$ 
		\item Suppose the first letter in $v_j$ is $\alpha.$ Remove $\alpha$ from $v_j$ and append $\alpha$ to the end of $u_j.$ 
		\item Go back to (2).
		\end{enumerate}
	\item Output $(\u, \v).$
\end{enumerate}

It is clear that if the algorithm works, at each step the pair $(\u, \v)$ always has the property that $\u \circ \v = \w,$ and when the algorithm terminates, the word $\u$ satisfies that $h(\u) = \bh.$

We first show that the algorithm works.

\begin{lem}\label{lem:stepb}
At each step of the algorithm FIS, if $h_j(\u) > h_j,$ then $u_j \neq w_j.$ 
\end{lem}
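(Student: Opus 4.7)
The plan is to prove the contrapositive: if $u_j = w_j$ at some step of the algorithm, then $h_j(\u) \le h_j$. To set this up, I would first highlight the loop invariant that the pair $(\u, \v)$ always satisfies $\u \circ \v = \w$ throughout the execution of FIS. This is true initially (by step (1)) and is preserved by step (2)(b), which simply transfers the first letter of $v_j$ to the end of $u_j$ without changing their concatenation. In particular, $u_j$ is always an initial subword of $w_j$, and the assumption $u_j = w_j$ forces $v_j$ to be the empty word.

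Next, I would compute $h_j(\v)$ when $v_j = \emptyset$ directly from the definition of the height function. The first two summands in the definition of $h_j(\v)$ count letters that occur in $v_j$ and hence both vanish. The remaining third summand $\sum_{i:\, j \in I_i} r_i \cdot \#(\text{$s_i$'s not in } v_j)$ is a non-negative combination of the $r_i$'s. Consequently $h_j(\v) \ge 0$.

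Finally, invoking Corollary \ref{cor:additivity} applied to the decomposition $\w = \u \circ \v$, we have
\[ h_j(\u) = h_j(\w) - h_j(\v) \le h_j(\w) \le h_j, \]
where the last inequality is the hypothesis $h(\w) \le \bh$ that was placed on the input of FIS. This establishes the contrapositive and hence the lemma.

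I do not anticipate any real obstacle: the argument is essentially a bookkeeping calculation once the invariant $\u \circ \v = \w$ is identified and the additivity of $h$ (Corollary \ref{cor:additivity}) is applied. The only subtlety is verifying non-negativity of $h_j$ on a word whose $j$-th component is empty, which the definition makes transparent.
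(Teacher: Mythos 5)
Your proof is correct and essentially matches the paper's argument. The paper isolates the key fact as Lemma \ref{lem:whole}, showing directly that $h_j(\w) - h_j(\u) = \sum_{i:\, j\in I_i} r_i \cdot (\#\text{$s_i$'s in $\w$} - \#\text{$s_i$'s in $\u$}) \ge 0$ whenever $u_j = w_j$; you instead compute the same quantity as $h_j(\v)$ with $v_j = \emptyset$ and invoke Corollary \ref{cor:additivity}, but these are two bookkeepings of the identical observation.
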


This lemma indicates that whenever $h_j(\u) > h_j,$ we must have that $u_j$ is not the whole word $w_j$ yet and thus $v_j$ is not empty and we can pick the first letter of $v_j$. Hence, the algorithm won't run into trouble at step (2)/(b). We prove Lemma \ref{lem:stepb} using the following lemma.

\begin{lem}\label{lem:whole}
Suppose $\u$ is an initial subword of $\w$. For any $j: 1\le j \le \ell,$ if $u_j = w_j,$ i.e., $u_j$ is the whole word $w_j,$ then $h_j(\u) \le h_j(\w).$
\end{lem}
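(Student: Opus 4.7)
The plan is to exploit the assumption $u_j = w_j$ to show that all letter-contributions to $h_j(\u)$ and $h_j(\w)$ agree \emph{except} for the contribution of letters $s_i$ that appear elsewhere in the word, and that the $\u$-contribution is no larger than the $\w$-contribution.

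First I would write $\w = \u \circ \v$, so that $w_j = u_j v_j$. For each $i$ with $j \in I_i$, let $a_{i,j}^\u$ (respectively $a_{i,j}^\w$) denote the number of $s_i$'s appearing in $u_j$ (respectively $w_j$), and let $a_{0,j}^\u, a_{0,j}^\w$ similarly count the $s_0$'s. Let $n_i^\u$ (respectively $n_i$) be the total number of $s_i$'s in $\u$ (respectively $\w$). The key observation is that since $u_j = w_j$, we have
\[ a_{0,j}^\u = a_{0,j}^\w \quad \text{and} \quad a_{i,j}^\u = a_{i,j}^\w \text{ for every } i \text{ with } j \in I_i. \]

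Next I would write out $h_j(\u)$ and $h_j(\w)$ from the definition of the height function. The first term (the contribution of the $s_0$'s inside $w_j = u_j$) is identical. The second term (the contribution $(r_i-1) a_{i,j}$ of $s_i$'s inside $w_j = u_j$) is also identical for each $i$ with $j \in I_i$. The only difference appears in the third term, where $h_j(\w)$ contributes $r_i(n_i - a_{i,j}^\w)$ while $h_j(\u)$ contributes $r_i(n_i^\u - a_{i,j}^\u)$. Taking the difference and using the identities above,
\[ h_j(\u) - h_j(\w) = \sum_{i: j \in I_i} r_i (n_i^\u - n_i) = -\sum_{i: j \in I_i} r_i \, n_i^\v \le 0, \]
since each $r_i > 0$ and the number $n_i^\v$ of $s_i$'s in $\v$ is nonnegative. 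This proves $h_j(\u) \le h_j(\w)$.

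I don't expect any obstacle here; the lemma is essentially a bookkeeping statement whose content is that passing from $\u$ to $\w$ (by appending $\v$) can only increase the height coordinate $h_j$ when $w_j$ has already been fully consumed, because every additional letter $s_i$ placed in some $v_k$ with $k \neq j$ (and $j \in I_i$) adds $r_i \geq 1$ to $h_j$, while the $s_0$'s and $s_i$'s placed in $v_j$ are zero in number by the hypothesis $u_j = w_j$. Alternatively, one could derive the inequality directly from Lemma~\ref{lem:height} and Corollary~\ref{cor:additivity}, since $h_j(\w) - h_j(\u) = h_j(\v) = \lambda_j(\tau, \bn^\v) - |v_j| = \lambda_j(\tau,\bn^\v) \geq 0$ (using $v_j = \emptyset$), but the direct computation above seems cleaner and more self-contained.
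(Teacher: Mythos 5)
Your main computation is correct and takes essentially the same route as the paper: both observe that when $u_j=w_j$ the $s_0$-contribution and the within-$w_j$ $s_i$-contributions to $h_j$ coincide for $\u$ and $\w$, so the difference reduces to $\sum_{i:\, j\in I_i} r_i$ times the number of extra $s_i$'s in $\w$ outside of $w_j$, which is nonnegative. (Your alternative via Lemma~\ref{lem:height} and Corollary~\ref{cor:additivity} is also valid, since those results precede this lemma, but it is not the paper's argument.)
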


\begin{proof}
By the definition of the height function, if $u_j = w_j,$ we have that
\[  h_j(\w) - h_j(\u) = \sum_{i: j \in I_i} r_i \cdot \left(\#(\text{letters $s_i$'s in $\w$})- \#(\text{letters $s_i$'s in $\u$})\right) \ge 0.\]
\end{proof}

\begin{proof}[Proof of Lemma \ref{lem:stepb}]
	At each step of the algorithm, $\u$ is always an initial subword of $\w.$ By the condition of the input, we have $h_j(\w) \le h_j < h_j(\u).$ Hence, by Lemma \ref{lem:whole}, we conclude that $u_j \neq w_j.$
\end{proof}

\begin{lem}\label{lem:stepa}
At each step of the algorithm FIS, we always have that
\[ h_j(\u) \ge h_j, \quad \forall 1 \le j \le \ell.\]
\end{lem}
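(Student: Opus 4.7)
The plan is to prove this by induction on the number of iterations of the outer while-loop of the algorithm FIS. Initially, $\u$ is the empty word, so $h(\u) = \0$, and since the hypothesis gives $\bh \in (\Z_{\le 0})^\ell$, we have $0 \ge h_j$ for each $j$. This disposes of the base case.

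For the inductive step, I would first tabulate exactly how appending a single letter $\alpha$ to the $j$th component $u_j$ affects the height vector $h(\u)$. Using the formula $h_k(\u) = \lambda_k(\tau, \bn_\u) - |u_k|$ from Lemma \ref{lem:height}, the analysis splits into two cases. If $\alpha = s_0$, then $\bn_\u$ is unchanged while $|u_j|$ rises by $1$, so $h_j(\u)$ drops by exactly $1$ and every other $h_k(\u)$ is unaltered. If instead $\alpha = s_i$ for some $i \ge 1$ (in which case $j \in I_i$ by Definition \ref{defn:taun}(c)), then $(\bn_\u)_i$ rises by $1$; this bumps $\lambda_k(\tau, \bn_\u)$ up by $r_i$ for every $k \in I_i$ and leaves it fixed otherwise. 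Combined with the unit-length increase in $u_j$, the net effect is that $h_j(\u)$ shifts by $r_i - 1 \ge 0$, $h_k(\u)$ increases by $r_i$ for $k \in I_i \setminus \{j\}$, and the remaining components are fixed.

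In both cases, $h_k(\u)$ is non-decreasing for every $k \neq j$, so the inductive hypothesis $h_k(\u) \ge h_k$ carries over automatically for those indices. For $k = j$, the only scenario in which $h_j(\u)$ decreases is when $\alpha = s_0$, and then it drops by exactly $1$. The crucial observation is that the algorithm only selects $j$ at step (2)(a) when $h_j(\u) > h_j$; since heights are integers, this forces $h_j(\u) \ge h_j + 1$ immediately before the append, which is precisely enough slack to absorb a single-step decrement of $1$. Hence $h_j(\u) \ge h_j$ is preserved after the append as well, completing the induction.

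No real obstacle arises — the proof is routine bookkeeping once the single-letter update formulas are in hand. The only genuinely substantive point is the tight match between the strict inequality in the selection rule (2)(a) and the maximum possible one-step drop in $h_j(\u)$, which is $1$ because $r_i - 1 \ge 0$ for every $i \ge 1$.
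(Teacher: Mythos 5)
Your proposal is correct and follows essentially the same route as the paper's proof: induction on the iterations of the while loop, explicit tabulation of the one-step change in each coordinate of $h(\u)$ split into the cases $\alpha = s_0$ and $\alpha = s_i$, and the observation that the only possible decrease is a drop of exactly $1$ in coordinate $j$, which is absorbed because the selection rule guarantees $h_j(\u) > h_j$ before the append. The only cosmetic difference is that you derive the update formulas via Lemma \ref{lem:height} rather than directly from the definition of the height function, which is an equivalent computation.
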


Note that this lemma shows that whenever $h(\u) \neq \bh,$ there always exists $j$ such that $h_j(\u) > h_j.$ Thus, the algorithm won't run into trouble at step (2)/(a).

\begin{proof}
	We prove this by induction. Initially, $h(\u) = (0, \dots, 0) \ge \bh.$
	
	Suppose at the beginning of a loop inside (2), we have $h(\u) \ge \bh.$ Since $h(\u) \neq \bh,$ we can find a $j$ such that $h_j(\u) > h_j.$ By Lemma \ref{lem:stepb}, we will run step (b) without problem. Let $\alpha$ be the letter involved. To avoid confusion, we use $\u'$ to denote the new $\u$ we obtain in step (b). We want to show that $h(\u') \ge \bh.$ There are two situations.
	\begin{itemize}
		\item If $\alpha = s_0,$ then
	\[ h_{j'}(\u') = \begin{cases}h_{j'}(\u), & j' \neq j \\
	h_{j'}(\u) -1, & j' = j
	\end{cases}.\]
		\item If $\alpha = s_i$ for some $1 \le i \le m,$ then
	\[ h_{j'}(\u') = \begin{cases}h_{j'}(\u), & j' \not\in I_i \\
	  h_{j'}(\u) +r_i-1, & j' = j \ (\text{so $j' \in I_i$}) \\
	h_{j'}(\u) +r_i, & j' \in I_i, j' \neq j
	\end{cases}.\]
	\end{itemize}
	In both cases, one checks that $h(\u) \ge \bh$ and $h_j(\u) > h_j$ imply that $h(\u') \ge \bh.$
      \end{proof}

\begin{lem}\label{lem:terminate}
The algorithm FIS always terminates.
\end{lem}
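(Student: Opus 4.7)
My plan is to show termination by combining a finiteness argument (the total length of $\w$ decreases) with the invariant $h(\u) \ge \bh$ already established in Lemma \ref{lem:stepa}. First I would observe that each execution of the body of the while loop performs exactly one call of step (2)(b), which moves one letter from $\v$ to $\u$. Lemmas \ref{lem:stepb} and \ref{lem:stepa} guarantee that the auxiliary steps (2)(a) and (2)(b) can always be carried out: whenever $h(\u) \ne \bh$, Lemma \ref{lem:stepa} produces an index $j$ with $h_j(\u) > h_j$, and Lemma \ref{lem:stepb} then guarantees $u_j \ne w_j$, so $v_j$ is non-empty and a first letter exists. Thus no single iteration can fail.

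Next, I would use a monovariant to bound the number of iterations. Let $N = \sum_{j=1}^\ell |w_j|$, the total number of letters in $\w$. The quantity $\sum_{j=1}^\ell |v_j|$ strictly decreases by $1$ in each iteration, so after at most $N$ iterations we reach the state in which $\v = (\emptyset, \dots, \emptyset)$. At that point $\u = \w$, hence $h(\u) = h(\w)$. By the assumption on the input, $h(\w) \le \bh$, and by Lemma \ref{lem:stepa}, $h(\u) \ge \bh$. Combining these gives $h(\u) = \bh$, so the while-loop condition fails and the algorithm exits.

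In other words, the only possible ways the algorithm could fail to terminate would be (i) an iteration getting stuck at step (2)(a) or (2)(b), ruled out by Lemmas \ref{lem:stepa} and \ref{lem:stepb}, or (ii) the while loop continuing forever, ruled out by the fact that $\v$ loses one letter per iteration and $\w$ has only finitely many letters. The only real subtlety is verifying that when $\v$ has been exhausted the loop does in fact terminate, and this is immediate from the squeeze $\bh \le h(\u) = h(\w) \le \bh$. I do not expect any serious obstacle here beyond correctly invoking the previously established invariants.
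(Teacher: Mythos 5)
Your proof is correct and rests on the same core idea as the paper's: the monovariant $\sum_j |v_j|$ (equivalently, the number of letters already placed in $\u$) is bounded and changes monotonically by one per iteration, so the loop can run at most $|\w|$ times. The extra "squeeze'' $\bh \le h(\u) = h(\w) \le \bh$ you add when $\v$ is exhausted is a valid and explicit way to see the loop guard fails; the paper leaves this implicit because the bounded monovariant already forces termination.
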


\begin{proof}
Since the number of letters in $\u$ increases by one each time we run the loop (a)-(c) inside step (2), and $\u$ is an initial subword of $\w,$ which has finitely many letters, the algorithm has to terminate at some point.
\end{proof}

Lemmas \ref{lem:stepb}, \ref{lem:stepa} and \ref{lem:terminate} show that our algorithm FIS is a well-defined algorithm. We finally discuss the properties of the output of the algorithm.

\begin{lem}\label{lem:iqb}
  Suppose $(\u^{o}, \v^o)$ is the output of the algorithm FIS taking input $(\w, \bh)$. The followings are true.
\begin{ilist}
	\itm For any initial subword $\u'$ of $\w$ that has height $\bh$, we must have that $\u^o$ is an initial subword of $\u'$. 
	\itm 
	For each $j$ where $h_j < 0,$ $u^o_j$ ends with an $s_0.$
\end{ilist}
\end{lem}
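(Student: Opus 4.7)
The plan is to prove (i) by induction on the steps of the algorithm, maintaining the invariant that the running $\u$ is already an initial subword of $\u'$, and to prove (ii) by zooming in on the step at which the algorithm makes its last modification to position $j$.

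First I would record a mild extension of Lemma \ref{lem:whole}: if $\u$ is an initial subword of $\u'$ (not necessarily of $\w$) and $u_j = u'_j$, then $h_j(\u) \le h_j(\u')$. The proof is identical to that of Lemma \ref{lem:whole}: once $u_j = u'_j$, the only discrepancies in the defining formula for $h_j$ come from the $s_i$-counts in components indexed by $j' \neq j$, and for those $u_{j'}$ is a prefix of $u'_{j'}$, so the relevant non-negative contributions to $h_j(\u')$ can only be larger.

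For (i), the base case is immediate since the empty tuple is an initial subword of every $\u'$. For the inductive step, suppose the running $\u$ is an initial subword of $\u'$ and the algorithm picks an index $j$ with $h_j(\u) > h_j$. The goal is to show that $u_j$ is a strict prefix of $u'_j$, for then the next letter pulled from $v_j$ coincides with the next letter of $u'_j$ and $\u$ remains an initial subword of $\u'$ after the appendage. If instead $u_j = u'_j$, the extension of Lemma \ref{lem:whole} above gives $h_j(\u) \le h_j(\u') = h_j$, contradicting the choice of $j$. The induction then closes and the terminal $\u^o$ is an initial subword of $\u'$.

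For (ii), fix a $j$ with $h_j < 0$. The component $u^o_j$ cannot be empty: otherwise the $-\#(s_0 \text{ in } u^o_j)$ term in the formula for $h_j(\u^o)$ vanishes and the remaining terms are non-negative, forcing $h_j(\u^o) \ge 0 > h_j$. Let $\alpha$ denote the letter appended to $u^o_j$ at its very last modification, and write $\u^-, \u^+$ for the states of $\u$ immediately before and after that step. By the algorithm's selection rule, $h_j(\u^-) > h_j$. Since no further letters are ever appended to position $j$, each subsequent step appends a letter to some $u_{j'}$ with $j' \neq j$; inspecting the defining formula of $h_j$, such an appendage either leaves $h_j$ unchanged or (when the letter is $s_i$ with $j \in I_i$) increases it by $r_i \ge 1$. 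Thus $h_j(\u)$ is non-decreasing from $\u^+$ to $\u^o$, and combined with Lemma \ref{lem:stepa} and $h_j(\u^o) = h_j$ this pins $h_j(\u^+)$ to $h_j$. If $\alpha$ were $s_i$ with $j \in I_i$, we would have $h_j(\u^+) = h_j(\u^-) + (r_i - 1) \ge h_j(\u^-) > h_j$, a contradiction. Hence $\alpha = s_0$.

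The only subtle point is the monotonicity analysis for (ii): one has to recognize that every subsequent step affecting $h_j$ can only increase it, so the target value $h_j$ is already attained immediately after the last appendage to $u^o_j$, which in turn constrains the identity of $\alpha$.
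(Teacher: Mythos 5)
Your proof is correct and follows essentially the same approach as the paper's. For part (i), you prove directly by induction what the paper proves by contradiction (locating the first step at which the invariant ``the running $\u$ is an initial subword of $\u'$'' would break), but the key step in both arguments is applying the one-word version of Lemma~\ref{lem:whole} with $\u'$ in place of $\w$ — the paper does this implicitly, while you correctly flag it as a mild extension requiring only that the outer word be a coordinatewise prefix. For part (ii), your argument makes explicit two points the paper compresses (that $u^o_j$ is nonempty when $h_j<0$, and that $h_j(\u^+)$ is pinned to $h_j$ by monotonicity together with Lemma~\ref{lem:stepa}), but the governing observation — that $h_j(\u)$ can only decrease when a letter is appended to position $j$, and only then via $s_0$ — is the same.
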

\begin{proof}
	\begin{ilist}
\itm Assume to the contrary that $\u^o$ is not an initial subword of $\u'.$ 
For convenience, we name the list of $\u$'s created by the algorithm $\u^{(0)}, \u^{(1)}, \dots, \u^{(k)}.$ So we have $\u^{(0)}$ is the empty word and $\u^{(k)}=\u^o.$

Since $\u^{(0)}$, the empty word, is an initial subword of $\u'$ by definition, there exists $i: 0 \le i \le k-1$ such that $\u^{(i)}$ is an initial subword of $\u'$ and $\u^{(i+1)}$ is not an initial subword of $\u',$ where we have obtained $\u^{(i+1)}$ from $\u^{(i)}$ by running the while loop (2) in the algorithm once. Suppose during this loop, we take the entry $h_j(\u^{(i)}) > h_j,$ and append a letter $\alpha$ to $u^{(i)}_j.$ One sees that we must have that $u^{(i)}_j = u_j'.$ Then by Lemma \ref{lem:whole},
\[ h_j(\u^{(i)})\le h_j(\u').\]
However, $h_j(\u^{(i)}) > h_j$ and $h_j(\u') = h_j.$ This is a contradiction.

\itm
One sees that $h_j(\u)$ decreases only when the algorithm adds $s_0$ to $u_j$. Further, each time we run the while loop (2) of the algorithm where a letter is appended to $u_j,$ we have to have $h_j(\u) > h_j.$ Therefore, in order to have $h_j(\u)$ become $h_j,$ a negative number, the last letter the algorithm adds to $u_j$ must be $s_0$.


\end{ilist}
\end{proof}

\begin{proof}[Proof of Theorem \ref{thm:decomp}]
	Suppose $(\u, \v)$ is the output of the algorithm FIS taking input $(\w,\bh)$.	Since $h(\u) = \bh,$ we just need to show that $\u$ is the unique irreducible initial subword with height $\bh.$ Note that any initial subword of $\u$ is an initial subword of $\w.$ Therefore, the irreducibility of $\u$ follows from Lemma \ref{lem:iqb}/(i). Suppose $\u'$ is also an irreducible initial subword of $\w$ with height $\bh.$ Then by Lemma \ref{lem:iqb}/(i), we have $\u$ is an initial subword of $\u'.$ However, since $\u'$ is irreducible, $\u = \u'.$ Thus, the uniqueness follows.
\end{proof}

Below is a consequence of Lemma \ref{lem:iqb}/(ii).

\begin{cor}\label{cor:iqb-s0}
Suppose $\w$ has height $\bh=(h_1, \dots, h_\ell)$ where $h_j=0$ or $-1$ for each $j,$ and is irreducible. Then $\w$ can be written as
\[ \w = \u \circ (s_0^{-h_1}, \cdots, s_0^{-h_\ell}),\]
for some balanced $(\tau, \bn)$-word $\u.$ Here, $s_0^k$ stands for $k$ consecutive $s_0$'s. (So $s_0^0 = \emptyset$ and $s_0^1 = s_0$.)
\end{cor}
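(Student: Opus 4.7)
The plan is to invoke Lemma \ref{lem:iqb}/(ii) to strip a trailing $s_0$ from each component $w_j$ for which $h_j = -1$, and then verify by a height computation that what remains is balanced.

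First I would unpack the claimed factorization. Writing $\w = (w_1, \dots, w_\ell)$ and setting $\bh = h(\w) = (h_1, \dots, h_\ell)$ with each $h_j \in \{0, -1\}$, the equality $\w = \u \circ (s_0^{-h_1}, \dots, s_0^{-h_\ell})$ amounts to requiring that $w_j = u_j$ whenever $h_j = 0$ and $w_j = u_j \cdot s_0$ whenever $h_j = -1$. In the latter case, existence of such a $u_j$ is precisely the statement that $w_j$ ends in the letter $s_0$, which is exactly what Lemma \ref{lem:iqb}/(ii) guarantees, using the hypothesis that $\w$ is irreducible with height $\bh$. This defines $\u = (u_1, \dots, u_\ell)$ unambiguously, and $\u$ is a $(\tau, \bn)$-word (for the same $\bn$ as $\w$) because deletion of $s_0$'s does not affect the counts of $s_i$ for $i \ge 1$.

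Next I would verify balance. The word $\v := (s_0^{-h_1}, \dots, s_0^{-h_\ell})$ is a $(\tau, \0)$-word; a direct application of the definition of the height function shows that its $j$-th height coordinate is $(-1) \cdot (-h_j) = h_j$, since only $s_0$'s appear. Hence $h(\v) = \bh$. Applying Corollary \ref{cor:additivity} to $\w = \u \circ \v$ gives $\bh = h(\w) = h(\u) + h(\v) = h(\u) + \bh$, so $h(\u) = \0$ and $\u$ is balanced by definition.

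There is essentially no obstacle here, since the substantive work was already done in proving Lemma \ref{lem:iqb}/(ii) via the algorithm FIS; the present statement is a clean packaging of that fact together with the additivity of the height function. The only minor care required is to check that the letters being stripped are indeed $s_0$'s (and not some other letter that happens to decrement the height), which is precisely why Lemma \ref{lem:iqb}/(ii) was formulated to specify the identity of the terminal letter rather than merely its existence.
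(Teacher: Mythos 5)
Your proof is correct and takes essentially the same route as the paper: identify the trailing $s_0$'s via Lemma \ref{lem:iqb}/(ii) (using that FIS on $(\w,\bh)$ must output $(\w,\emptyset)$ by irreducibility) and then deduce balancedness of $\u$ from Corollary \ref{cor:additivity}. You spell out the height computation for $(s_0^{-h_1},\dots,s_0^{-h_\ell})$ slightly more explicitly than the paper does, but the ingredients and the logic are the same.
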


\begin{proof}
	Since $\w$ has a non-positive height $\bh$, we can run FIS with $(\w,\bh).$ The output must be $(\w, (\emptyset, \dots, \emptyset)).$ Then the conclusion follows from Lemma \ref{lem:iqb}/(ii) and Corollary \ref{cor:additivity}.
      \end{proof}




\section{Proof of Theorem \ref{thm:tauwords}} \label{sec:finalproof}

In this section, we will use Theorem \ref{thm:decomp} to prove Theorem \ref{thm:tauwords}.
Theorem \ref{thm:decomp} is stated in terms of words of non-positive height; we need a version of it using the language of $S_\tau(\bn, \t),$ recalling $S_\tau^{\irr}(\bn,\t)$ is the set of irreducible words in $S_\tau(\bn,\t).$ (See Definition \ref{defn:irred}.)  

\begin{lem}\label{lem:decompQ}
  Suppose $\t \in \N^\ell$ and $\w \in S_\tau(\bn, \t).$ Then there is a unique way to decompose $\w$ as
\[ \w = \u \circ \v,\]
such that $\v$ is a balanced word and the word $\u$ is irreducible and has height $-\t.$

Therefore, the decomposition induces a bijection
\[ S_\tau(\bn,\t) \to \bigsqcup_{(\bn_1, \bn_0)} S_\tau^{\irr}(\bn_1, \t) \times S_\tau(\bn_0, \0), \]
where the disjoint union is over all the weak $2$-compositions $(\bn_1,\bn_0)$ of $\bn.$\end{lem}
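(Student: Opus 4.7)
The plan is to derive the lemma directly from Theorem \ref{thm:decomp}, essentially as a translation into the language of $S_\tau(\bn,\t)$. By Corollary \ref{cor:heightQ}, any $\w \in S_\tau(\bn, \t)$ has height exactly $-\t$, which lies in $(\Z_{\le 0})^\ell$. Applying Theorem \ref{thm:decomp} with $\bh = -\t$ (the hypothesis $h(\w) \le \bh$ is actually satisfied as equality here), we obtain a unique initial subword $\u$ of $\w$ that has height $-\t$ and is irreducible. Writing $\w = \u \circ \v$, I would then invoke the additivity of the height (Corollary \ref{cor:additivity}) to get $h(\v) = h(\w) - h(\u) = -\t - (-\t) = \0$, so $\v$ is balanced. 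Uniqueness of the decomposition of this form is immediate from the uniqueness clause in Theorem \ref{thm:decomp}: any other decomposition $\w = \u' \circ \v'$ with $\u'$ irreducible of height $-\t$ would exhibit $\u'$ as an initial subword of $\w$ with exactly the properties characterizing $\u$.

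For the bijection, I would define the forward map by the decomposition just constructed: sending $\w \in S_\tau(\bn,\t)$ to $(\u, \v)$ where $\u \in S_\tau^{\irr}(\bn_1, \t)$ and $\v \in S_\tau(\bn_0, \0)$ for the unique $\bn_1, \bn_0 \in \N^m$ recording the letter-counts of $\u$ and $\v$. Since these letter counts sum to $\bn$, the pair $(\bn_1, \bn_0)$ is a weak $2$-composition of $\bn$, and (by Remark \ref{rem:bal}) $\v$ indeed belongs to $S_\tau(\bn_0, \0)$.

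For the inverse map, given a weak $2$-composition $(\bn_1, \bn_0)$ of $\bn$ and $(\u, \v) \in S_\tau^{\irr}(\bn_1, \t) \times S_\tau(\bn_0, \0)$, I would form $\w := \u \circ \v$. By Corollary \ref{cor:additivity} and Corollary \ref{cor:heightQ}, $h(\w) = -\t + \0 = -\t$, so $\w \in S_\tau(\bn_1 + \bn_0, \t) = S_\tau(\bn, \t)$. The two maps are mutually inverse: applying the forward map to $\u \circ \v$ recovers $(\u, \v)$ by the uniqueness established in the first paragraph, since $\u$ is already an irreducible initial subword of $\w$ with height $-\t$.

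There is no real obstacle here; the content of the lemma is essentially a repackaging of Theorem \ref{thm:decomp} together with the height-additivity and the characterization of $S_\tau(\bn, \t)$ via the height function. The only thing to be a little careful about is to verify that the forward map does land in the claimed disjoint union (i.e., that the pair $(\bn_1, \bn_0)$ is determined by $\w$ once the decomposition is fixed), which follows from the uniqueness of $\u$.
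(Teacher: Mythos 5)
Your proof is correct and takes essentially the same approach as the paper, which simply states that the lemma "follows from Corollaries \ref{cor:additivity} and \ref{cor:heightQ} and Theorem \ref{thm:decomp}." You have spelled out exactly the details that the paper's one-line proof leaves implicit.
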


\begin{proof}
  It follows from Corollaries \ref{cor:additivity} and \ref{cor:heightQ} and Theorem \ref{thm:decomp}.
\end{proof}

 The following is another consequence of Theorem \ref{thm:decomp}, a result on the set $S_\tau^{\irr}(\bn,\t).$
\begin{lem}\label{lem:decompirr}
  Let $\t \in \N^\ell$ and $(\t_1, \t_2, \dots, \t_k)$ a weak composition of $\t.$ Suppose $\w \in S_\tau^{\irr}(\bn, \t).$ Then there is a unique way to decompose $\w$ as
\[ \w = \u_1 \circ \u_2 \circ \cdots \circ \u_k \]
such that for each $j,$ the word $\u_j$ is irreducible and has height $-\t_j.$

Therefore, the decomposition induces a bijection
\[ S_\tau^{\irr}(\bn,\t) \to \bigsqcup_{(\bn_1, \dots, \bn_k)} S_\tau^{\irr}(\bn_1, \t_1) \times \cdots \times S_\tau^{\irr}(\bn_k, \t_k), \]
where the disjoint union is over all the weak $k$-compositions of $\bn.$
\end{lem}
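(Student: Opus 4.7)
The plan is to reduce Lemma \ref{lem:decompirr} to the $k=2$ case by induction, and to handle the $k=2$ case directly using Theorem \ref{thm:decomp} together with additivity of the height function (Corollary \ref{cor:additivity}). First I would focus on proving existence and uniqueness of a decomposition $\w = \u_1 \circ \v$ with $\u_1$ irreducible of height $-\t_1$ and $\v$ irreducible of height $-(\t-\t_1)$, whenever $\w$ is irreducible of height $-\t$ and $\t_1 \le \t$ (meaning componentwise, so that $\t - \t_1 \in \N^\ell$).

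For the $k=2$ step: since $\w$ has height $-\t$ and $-\t \le -\t_1$ (as $\t - \t_1 \in \N^\ell$), Theorem \ref{thm:decomp} produces a unique irreducible initial subword $\u_1$ of $\w$ of height $-\t_1$. Writing $\w = \u_1 \circ \v$, Corollary \ref{cor:additivity} forces $h(\v) = h(\w) - h(\u_1) = -(\t-\t_1)$. To see $\v$ is irreducible, suppose $\v = \v' \circ \v''$ with $\v'$ a proper initial subword of $\v$ having height $-(\t-\t_1)$; then $\u_1 \circ \v'$ would be a proper initial subword of $\w$ of height $-\t$, contradicting irreducibility of $\w$. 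Conversely, any decomposition $\w = \u_1 \circ \v$ with the stated properties makes $\u_1$ an irreducible initial subword of $\w$ of height $-\t_1$, so $\u_1$ (and hence $\v$) is uniquely determined by Theorem \ref{thm:decomp}.

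For general $k$, I would proceed by induction on $k$. Apply the $k=2$ case to $\w$ and $\t_1$ to write $\w = \u_1 \circ \w'$ with $\u_1$ irreducible of height $-\t_1$ and $\w'$ irreducible of height $-(\t_2 + \cdots + \t_k)$; then apply the inductive hypothesis to $\w'$ with the weak $(k-1)$-composition $(\t_2, \dots, \t_k)$. Existence and uniqueness both follow from the inductive hypothesis combined with the base case. When some $\t_j = \0$, the only irreducible word of height $\0$ is the empty word, so the corresponding $\u_j$ is empty, which is consistent.

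Finally, the bijection statement follows from this decomposition algorithm: the map sending $\w \in S_\tau^{\irr}(\bn, \t)$ to the tuple $(\u_1, \dots, \u_k)$ of irreducible pieces (of types $(\bn_1, \t_1), \dots, (\bn_k, \t_k)$ where $\bn_j$ is the content of $\u_j$) is well-defined and injective by uniqueness, and its inverse is given by concatenation: given $\u_j \in S_\tau^{\irr}(\bn_j, \t_j)$, the concatenation $\u_1 \circ \cdots \circ \u_k$ has content $\bn_1 + \cdots + \bn_k$ and, by Corollary \ref{cor:additivity}, height $-\t$; one checks it is irreducible because any proper initial subword breaking off inside some $\u_j$ would contradict the irreducibility of $\u_j$. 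The main obstacle I anticipate is simply verifying the irreducibility claim for each piece cleanly, since the notion of irreducibility is about the absence of proper initial subwords of a \emph{specified} non-positive height, so one has to track which height is relevant at each stage; the additivity of heights via Corollary \ref{cor:additivity} is what makes this manageable.
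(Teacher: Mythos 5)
Your proof is correct and takes essentially the same approach as the paper: iterate Theorem~\ref{thm:decomp} to strip off the irreducible pieces of heights $-\t_1, \dots, -\t_k$ and then use irreducibility of $\w$ to force the residual to vanish. The only difference is organizational — the paper applies the theorem $k$ times in one pass and shows the balanced leftover is empty, while you set it up as an induction on $k$ with an explicit $k=2$ step (proving the second factor is itself irreducible), which is an equivalent restatement of the same argument.
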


\begin{proof}
  Applying Theorem \ref{thm:decomp} $k$ times, it is clear that there is a unique way to decompose $\w$ as
\[ \w = \u_1 \circ \u_2 \circ \cdots \circ \u_k \circ \v \]
such that $\v$ is a balanced word and for each $j,$ the word $\u_j$ is irreducible and has height $-\t_j.$ It is enough to show that $\v$ is the empty word. However, by Corollaries \ref{cor:additivity} and \ref{cor:heightQ}, 
\[ h(\u_1 \circ \u_2 \circ \cdots \circ \u_k) = -\t_1 - \cdots - \t_k = -\t = h(\w).\]
Since $\w$ is irreducible, the desired result follows.
\end{proof}

Recall that for $\t \in \N^\ell,$ 
\[ \cS_{\tau, \t}(\x) :=\sum_{\bn \in \N^m} |S_\tau(\bn, \t)| \x^\bn = 1 + \sum_{\bn \in (\N^m)^*} |S_\tau(\bn, \t)| \x^\bn.\]
We also define generating functions for irreducible words:
\[ \cS_{\tau, \t}^{\irr}(\x) :=\sum_{\bn \in \N^m} |S_\tau^{\irr}(\bn, \t)| \x^\bn = 1 + \sum_{\bn \in (\N^m)^*} |S_\tau^{\irr}(\bn, \t)| \x^\bn.\]
We then have the following results on these two generating functions, following Lemmas \ref{lem:decompQ} and \ref{lem:decompirr}.
\begin{cor}\label{cor:decompgen} Suppose $\t \in \N^\ell$ and $(\t_1, \dots, \t_k)$ is a weak composition of $\t.$ Then
  \[ \cS_{\tau, \t}(\x) = \cS_{\tau,\t}^{\irr}(\x) \cdot \cS_{\tau, \0}(\x),\]
  and
  \[ \cS_{\tau, \t}^{\irr}(\x) = \prod_{j=1}^k \cS_{\tau,\t_j}^{\irr}(\x).\]

\end{cor}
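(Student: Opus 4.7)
The plan is essentially to translate the two set-theoretic bijections of Lemmas~\ref{lem:decompQ} and \ref{lem:decompirr} into generating function identities by taking cardinalities and summing against the monomial weight $\x^\bn$. Both identities will then drop out of the standard multivariate Cauchy product formula, and no new combinatorics is needed beyond what those lemmas already supply.

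First, for the identity $\cS_{\tau, \t}(\x) = \cS_{\tau,\t}^{\irr}(\x) \cdot \cS_{\tau, \0}(\x)$, I apply Lemma~\ref{lem:decompQ}. Passing to cardinalities in the bijection there, I get
\[ |S_\tau(\bn,\t)| \;=\; \sum_{\bn_1+\bn_0 = \bn} |S_\tau^{\irr}(\bn_1, \t)| \cdot |S_\tau(\bn_0, \0)|, \]
where the sum ranges over all weak $2$-compositions $(\bn_1,\bn_0)$ of $\bn$, i.e.\ all ordered pairs with $\bn_1,\bn_0 \in \N^m$ summing to $\bn$. Multiplying by $\x^\bn = \x^{\bn_1}\x^{\bn_0}$ and summing over $\bn \in \N^m$, the left side is (by definition) $\cS_{\tau,\t}(\x)$, while the right side is the Cauchy product $\cS_{\tau,\t}^{\irr}(\x) \cdot \cS_{\tau, \0}(\x)$, giving the first identity.

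Next, for $\cS_{\tau, \t}^{\irr}(\x) = \prod_{j=1}^k \cS_{\tau,\t_j}^{\irr}(\x)$, I apply Lemma~\ref{lem:decompirr} in exactly the same way. Taking cardinalities in its bijection yields
\[ |S_\tau^{\irr}(\bn,\t)| \;=\; \sum_{\bn_1+\cdots+\bn_k = \bn} \ \prod_{j=1}^k |S_\tau^{\irr}(\bn_j, \t_j)|,\]
with the sum over weak $k$-compositions of $\bn$. Weighting by $\x^\bn = \x^{\bn_1}\cdots \x^{\bn_k}$ and summing over $\bn \in \N^m$, the multivariate Cauchy product identifies the result as the product $\prod_{j=1}^k \cS_{\tau,\t_j}^{\irr}(\x)$, as claimed. (The case $k=0$, where $\t = \0$ forces $\bn = \0$ and the empty product equals $1$, is consistent with the remark preceding the corollary that $|S_\tau^{\irr}(\0,\0)|=1$.)

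There is no real obstacle here; the substantive combinatorial work has already been done in Lemmas~\ref{lem:decompQ} and \ref{lem:decompirr}. The only point worth checking carefully is the bookkeeping convention: ``weak composition'' must mean tuples in $\N^m$ rather than in $(\N^m)^*$, so that the index set of each Cauchy product is exactly $\N^m \times \cdots \times \N^m$ with unconstrained entries. This matches the convention fixed at the start of Section~\ref{sec:reformulate}, so the translation is unambiguous.
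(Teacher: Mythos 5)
Your proof is correct and is exactly the argument the paper intends: the corollary is stated as an immediate consequence of Lemmas \ref{lem:decompQ} and \ref{lem:decompirr}, and you have simply spelled out the passage from bijections to generating-function identities via the Cauchy product over weak compositions. Your observation that ``weak composition'' ranges over $\N^m$ (not $(\N^m)^*$) is the right convention and matches the definitions given at the start of Section \ref{sec:reformulate}.
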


We are now ready to prove a weaker version of Theorem \ref{thm:tauwords}. 
\begin{prop}\label{prop:main1}
  There exists formal power series $F_{\tau}^{(1)}(\x), F_{\tau}^{(2)}(\x), \dots, F_{\tau}^{(\ell)}(\x)$ and $H_{\tau}(\x)$ with $F_{\tau}^{(j)}(\0) =1$ for each $j$ and $H_\tau(\0)=1$ such that for any $\t \in \N^\ell,$ 
\[ S_{\tau, \t} (\x) = \left( F_{\tau}^{(1)}(\x) \right)^{t_1} \cdots  \left( F_{\tau}^{(\ell)}(\x) \right)^{t_\ell} \cdot H_{\tau}(\x).\]
\end{prop}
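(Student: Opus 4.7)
The plan is to exploit Corollary \ref{cor:decompgen} by applying its irreducible product decomposition to a maximally-fine weak composition of $\t$, and then to read off $F_\tau^{(j)}$ and $H_\tau$ from the resulting factors. Concretely, let $\be_j \in \N^\ell$ denote the $j$-th standard basis vector, and define
\[ F_\tau^{(j)}(\x) := \cS_{\tau, \be_j}^{\irr}(\x) \quad \text{for } 1 \le j \le \ell, \qquad H_\tau(\x) := \cS_{\tau, \0}(\x). \]
These are formal power series in $\x$, and everything else in the proof will just be bookkeeping.

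Given $\t = (t_1, \dots, t_\ell) \in \N^\ell$, I would consider the weak composition of $\t$ obtained by listing $\be_1$ a total of $t_1$ times, then $\be_2$ a total of $t_2$ times, and so on through $\be_\ell$. Applying the second identity of Corollary \ref{cor:decompgen} to this weak composition gives
\[ \cS_{\tau, \t}^{\irr}(\x) = \prod_{j=1}^{\ell} \bigl( \cS_{\tau, \be_j}^{\irr}(\x) \bigr)^{t_j} = \prod_{j=1}^{\ell} \bigl( F_\tau^{(j)}(\x) \bigr)^{t_j}, \]
and then combining with the first identity of Corollary \ref{cor:decompgen} yields the desired product formula
\[ \cS_{\tau, \t}(\x) = \cS_{\tau, \t}^{\irr}(\x) \cdot \cS_{\tau, \0}(\x) = \prod_{j=1}^{\ell} \bigl( F_\tau^{(j)}(\x) \bigr)^{t_j} \cdot H_\tau(\x). \]
(The $t_j = 0$ case and the $\t = \0$ case cause no trouble: an empty product is $1$, consistent with the conventions that $\cS_{\tau, \0}^{\irr}(\x) = 1$ from Remark-level observations on the empty word.)

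The remaining task is to check the normalization $F_\tau^{(j)}(\0) = 1$ and $H_\tau(\0) = 1$, i.e.\ that each generating function has constant term $1$ in $\x$. For $H_\tau$ the constant term is $|S_\tau(\0, \0)|$. A $(\tau, \0)$-word contains no letters $s_i$ with $i \ge 1$, so by Lemma \ref{lem:height} its height in coordinate $j$ equals $-L_j$, where $L_j$ is the length of $w_j$; the height $\0$ condition from Corollary \ref{cor:heightQ} forces every $L_j = 0$, giving the unique empty word. For $F_\tau^{(j)}$ the constant term is $|S_\tau^{\irr}(\0, \be_j)|$; the same reasoning forces $L_{j'} = 0$ for $j' \ne j$ and $L_j = 1$, leaving the single word with one $s_0$ in slot $j$, which is irreducible since its only proper initial subword is empty (of height $\0 \ne -\be_j$).

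There is no real obstacle here: all the structural work was done in Section \ref{sec:decomp} and in the lemmas leading to Corollary \ref{cor:decompgen}. The only subtlety is the choice of weak composition: by refining all the way down to the standard basis vectors $\be_1, \dots, \be_\ell$, the product factors coordinate-wise in $\t$, which is exactly what the proposition demands. I expect the only real work later will be extending this identity from $\t \in \N^\ell$ to $\t \in \Z^\ell$ (Theorem \ref{thm:tauwords}), which will follow because both sides agree on infinitely many $\t$ and both are governed by the polynomial $s_\tau(\bn, \t)$ in $\t$, allowing one to extend via polynomial equality.
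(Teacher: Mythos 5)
Your proposal is correct and takes essentially the same approach as the paper: the paper also defines $F_\tau^{(j)}(\x) := \cS_{\tau, \be_j}^{\irr}(\x)$ and $H_\tau(\x) := \cS_{\tau, \0}(\x)$, refines $\t$ to the weak composition with $t_j$ copies of each $\be_j$, and invokes Corollary~\ref{cor:decompgen}. Your explicit check that $F_\tau^{(j)}(\0) = H_\tau(\0) = 1$ is a small, correct elaboration of what the paper leaves implicit in its definitions of $\cS_{\tau,\t}(\x)$ and $\cS_{\tau,\t}^{\irr}(\x)$ as power series with constant term $1$.
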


\begin{proof}
Let $\t = (t_1, \dots, t_\ell) \in \N^\ell.$ Then 
\[ \left( \underbrace{\be_1, \dots, \be_1}_{t_1}, \underbrace{\be_2, \dots, \be_2}_{t_2}, \cdots, \underbrace{\be_\ell, \dots, \be_\ell}_{t_\ell}  \right) \]
is a weak composition of $\t,$ where $\be_j$ denotes the $j$th elementary vector of $\R^\ell$. Let
\begin{equation}\label{equ:defngen}
  F_\tau^{(j)}(\x) := \cS_{\tau, \be_j}^{\irr}(\x) \quad \forall j \qquad \text{and} \quad H_\tau(\x) := \cS_{\tau, \0}(\x).
\end{equation}
One sees that they have the desired property by Corollary \ref{cor:decompgen}.
\end{proof}

We finally prove Theorem \ref{thm:tauwords} using Proposition \ref{prop:main1}.
\begin{proof}[Proof of Theorem \ref{thm:tauwords}]
  Suppose $F_{\tau}^{(1)}(\x),$ $F_{\tau}^{(2)}(\x),$ $\dots,$ $F_{\tau}^{(\ell)}(\x)$ and $H_{\tau}(\x)$ with $F_{\tau}^{(i)}(\0) =1$ for each $i$ and $H_\tau(\0)=1$ are formal power series described in Proposition \ref{prop:main1}. It is easy to show that there exist functions $f(\bn, \t)$ for $\bn \in (\N^m)^*$ and $\t \in \Z^\ell$ such that 
  \[ \left( F_{\tau}^{(1)} \right)^{t_1} \cdots  \left( F_{\tau}^{(\ell)} \right)^{t_\ell} \cdot H_{\tau}(\x) = 1 + \sum_{\bn \in (\N^m)^*} f(\bn, \t) \x^{\bn}\quad \text{for any } \t \in \Z^\ell,\]
and $f(\bn, \t)$ is a multivariate polynomial in $\t$ for any fixed $\bn.$ 

By Proposition \ref{prop:main1}, we know that for any fixed $\bn,$ functions $f(\bn, \t)$ and $s_\tau(\bn,\t)$ are both multivariate polynomials that agree at all $\t \in \N^\ell.$ However, $\N^\ell$ is a Zariski dense subset of $\C^\ell.$ Therefore, they have to be the same polynomial. Hence, Theorem \ref{thm:tauwords} follows.
\end{proof}

\begin{rem}
  By the proofs in this section, one sees that in order to figure out the coefficients of linear function described in Theorems \ref{thm:main0intro} and \ref{thm:main0}, it is sufficient to the find the generating functions for $|S_{\tau}^{\irr}(\bn, \be_j)|$ for each $j$ and the generating function for $|S_\tau(\bn, \0)|$.

  By Corollary \ref{cor:iqb-s0}, the set $S_{\tau}^{\irr}(\bn, \be_j)$ is in bijection with a subset of $S_\tau(\bn,\0).$ So it might be worth studying the property of this particular subset. 

  Recall that $S_\tau(\bn, \0)$ is just the set of balanced $(\tau, \bn)$-words. Hence, balanced $(\tau, \bn)$-words could be an interesting subject for future study. 
\end{rem}

\section{Examples of $\varphi(\bn,\bbeta)$}\label{sec:examples}

Recall that $\varphi_\tau(\bn, \bbeta)$ is the linear function described in Theorems \ref{thm:main0intro} and \ref{thm:main0} if $G = G_\tau(\bn).$ 
In this section, we first consider a family of simple examples for which we are able to describe $p_\tau(\bn, \bbeta)$, $|S_\tau^{\irr}(\bn, \be_1)|$ and $|S_\tau(\bn, \0)|$ explicitly, and demonstrate the idea of how one might use this information to figure out an expression for $\varphi_\tau(\bn,\bbeta)$ on a special subfamily of the examples. We then extend the result and give an expression for $\varphi_\tau(\bn,\bbeta)$ for any $\tau$ that consists of one type of edges. 

We start by considering the situation when $\maxv(\tau) = 1.$ Then every $I_i =\{1\}.$ Suppose $\tau = (t_1, t_2, \dots, t_m),$ where $t_i = (\{1\}, r_i)$ and $r_1, \dots, r_m$ are distinct positive integers. Then $G_\tau(\bn)$ has $\sum_i n_i$ edges, $n_i$ of which have weight $r_i.$ We have $\lambda_1\left( G_\tau(\bn) \right) = \sum_i r_i n_i$ and $\lambda_j\left( G_\tau(\bn) \right)  = 0$ for all $j \ge 2.$
Since $\maxv(G_\tau(\bn)) \le 1,$ we consider $\bbeta = \beta \in \Z$ a $1$-dimensional vector.

For $\beta \ge \sum_i r_i n_i,$ the set of edges connecting vertices $0$ and $1$ in the graph $\ext_\beta\left(G_\tau(\bn)\right)$ consists of $n_i$ edges of weight $r_i$ for each $i$ and $\beta - \sum_i r_i n_i$ unweighted edges. Therefore,
\[ p_\tau(\bn, \beta) = P_\beta\left(G_\tau(\bn)\right) = \binom{\beta-\sum_i r_i n_i + \sum_i n_i}{\sum_i n_i} \binom{\sum_i n_i}{n_1, n_2, \dots, n_m}.\]
Thus,
\[ \sP_{\tau, \beta}(\x) = 1 + \sum_{\bn \in (\N^m)^*} \binom{\beta-\sum_i r_i n_i + \sum_i n_i}{\sum_i n_i} \binom{\sum_i n_i}{n_1, n_2, \dots, n_m} \x^{\bn},\]
and
\[ \sum_{\bn \in (\N^m)^*}  \varphi_\tau(\bn, \beta)\x^{\bn} = \log \left( 1 + \sum_{\bn \in (\N^m)^*} \binom{\beta-\sum_i r_i n_i + \sum_i n_i}{\sum_i n_i} \binom{\sum_i n_i}{n_1, n_2, \dots, n_m} \x^{\bn} \right).\]

Let $\ell = \maxv(\tau) = 1.$
In order to find functions $F_\tau^{(1)}(\x)$ and $H_{\tau}(\x)$ in Theorem \ref{thm:main1}, Theorem \ref{thm:tauwords} and Proposition \ref{prop:main1}, we consider the corresponding $(\tau, \bn)$-words. As we discussed in the proof of Proposition \ref{prop:main1}, $F_\tau^{(1)}(\x) = \cS_{\tau, \be_1}^{\irr}(\x)$ which is the generating function for $S_\tau^{\irr}(\bn, \be_1),$ and $H_\tau(x) = \cS_{\tau, \0}(x)$ which is the generating function for $S_\tau(\bn, \0).$ Note that since we are in a $1$-dimensional space, $\be_1 = 1$ and $\0 = 0.$

For the given setup, the $(\tau, \bn)$-words are actually the Lukasiewicz words in the literature. See Section 5.3 of \cite{stanleyec2}. It is known that there is a natural one-to-one correspondence between words $\w$ in $S_\tau^{\irr}(\bn, 1)$ and plane trees which have $n_i$ internal vertices of degree $r_i$ for each $i$ and $1+\sum_i (r_i-1) n_i$ leaves. Using this, one obtains the cardinality for $S_\tau^{\irr}(\bn, 1)$ (Theorem 5.3.10 in \cite{stanleyec2}):
\begin{align*}
	|S_\tau^{\irr}(\bn, 1)| =& \frac{1}{1+\sum_i r_i n_i} \binom{1 + \sum_i r_i n_i}{\sum_i n_i} \binom{\sum_i n_i}{n_1, n_2, \dots, n_m} \\
	=& \frac{1}{1+\sum_i (r_i-1) n_i} \binom{\sum_i r_i n_i}{\sum_i n_i} \binom{\sum_i n_i}{n_1, n_2, \dots, n_m}.
\end{align*}
Next, recall that the set $S_\tau(\bn, 0)$ is just the set of all the balanced words. Hence, $w \in S_\tau(\bn, 0)$ if and only if $w$ contains $n_i$ copies of the letter $s_i$ for each $i>0$ and $\sum_i (r_i-1) n_i$ copies of the letter $s_0$. Therefore,
\[ |S_\tau(\bn, 0)| = \binom{\sum_i r_i n_i}{\sum_i n_i} \binom{\sum_i n_i}{n_1, n_2, \dots, n_m}.\]

Therefore, we let
\begin{align*}
	F_\tau^{(1)}(\x) =& \sum_{\bn \in \N^m} \frac{1}{1+\sum_i (r_i-1) n_i} \binom{\sum_i r_i n_i}{\sum_i n_i} \binom{\sum_i n_i}{n_1, n_2, \dots, n_m} \x^{\bn}, \\
	H_{\tau}(\x) =& \sum_{\bn \in \N^m}  \binom{\sum_i r_i n_i}{\sum_i n_i} \binom{\sum_i n_i}{n_1, n_2, \dots, n_m} \x^{\bn}.
\end{align*}
Then these are the functions in Theorem \ref{thm:tauwords} and Theorem \ref{thm:main1}. Hence,
\begin{align*}
	\cS_{\tau, t}(\x) =& \left( F_\tau^{(1)}(\x) \right)^t H_{\tau}(\x), \text{ and } \\
	\sP_{\tau, \beta}(\x) =& \left( F_\tau^{(1)}(-\x) \right)^{-\beta-1} H_{\tau}(-\x).
\end{align*}
Thus, if we let $f(\bn)$ and $h(\bn)$ be the coefficients of $\x^{\bn}$ in $\log F_\tau^{(1)}(\x)$ and $\log H_{\tau}(\x)$ respectively, 
	then as in the proof of Theorem \ref{thm:main}, we obtain 
	\begin{equation}\label{equ:varphitau} \varphi_\tau(\bn, \beta) =   (-1)^{\bn}\left( -f(\bn) \beta + (h(\bn)-f(\bn))\right). 
	\end{equation}

	One way to figure out $f(\bn)$ and $h(\bn)$ is to use the Lagrange inversion formula. For example, it is known that $z := F_\tau^{(1)}(\x)$, the generating function for Lukasiewicz words, satisfies the equation:
\[ z = 1 + \sum_{i=1}^m x_i z^{r_i}.\]
Hence, one can use multivariate Lagrange inversion formula to figure out $f(\bn)$, which is the coefficient of $\x^{\bn}$ of $\log(z).$ 
Since the description of multivariate Lagrange inversion is complicated, we only demonstrate this approach with a special situation: when $m=1$ and $\tau= (t_1) = ( (\{1\}, r ))$ where $r \in \P,$ in which only single variate Lagrange inversion is needed.

Suppose ${\tau = ( (\{1\}, r) )}$. Since this is a special case of what we've discussed above, we immediately have
\begin{align}
  \sP_{\tau,\beta}(x) =& 1 + \sum_{n \in \P} \binom{\beta-rn + n}{n} x^n = \left(F_\tau^{(1)}(x)\right)^{-\beta-1} H_{\tau}(x), \label{equ:excP} \\
	\text{where} \quad	F_\tau^{(1)}(x) =& \sum_{n \in \N} \frac{1}{1+(r-1) n} \binom{rn}{n} x^{n}, \nonumber \\
	H_{\tau}(x) =& \sum_{n \in \N}  \binom{rn}{n} x^{n}. \nonumber
\end{align}
(In fact, words in $S_\tau^{\irr}(n, 1)$ are in one-to-one correspondence to $r$-ary trees with $n$ internal vertices, whose cardinality is well-known to be $\ds \frac{1}{1+(r-1) n} \binom{rn}{n}.$)  

For convenience, we abbreviate $F_\tau^{(1)}$ and $H_{\tau}(x)$ to $F(x)$ and $H(x)$ respectively.  By Examples 6.2.6 and 6.2.7 in \cite{stanleyec2}, the generating functions $F$ and $H$ satisfy the following equations:
\begin{align}
	F =& 1 + F G^r \label{equ:G} \\
	H(x^{r-1}) =& \frac{d}{dx} \left( x F(x^{r-1}) \right) = F(x^{r-1}) + x F'(x^{r-1}) (r-1) x^{r-2} \label{equ:G2H1}\\
	=& F(x^{r-1}) + (r-1) x^{r-1} F'(x^{r-1}) \nonumber
\end{align}
Note that \eqref{equ:G2H1} implies that
\begin{equation}\label{equ:G2H2}
	H = F + (r-1) x F'.
\end{equation}
By differentiating \eqref{equ:G}, we get
\[ F' = F^r + x r F^{r-1} F' \quad \Rightarrow \quad F' = \frac{F^r}{1 - x r F^{r-1}}.\]
Plugging the formula for $F'$ into \eqref{equ:G2H2}, we obtain
\begin{equation}\label{equ:G2H3}
	H = F +  \frac{x (r-1) F^r}{1 - x r F^{r-1}} = \frac{F - xF^r}{1- xr F^{r-1}} = \frac{1}{1 - xr F^{r-1}} = \frac{F}{F - xr F^r} = \frac{F}{F- r(F-1)}, 
\end{equation}
where both the third and the fifth equalities follow from \eqref{equ:G}.

Let \[ z = F_\tau^{(1)}(x)-1 = \sum_{n \in \P} \frac{1}{1+(r-1) n} \binom{rn}{n} x^{n}.\]
Then \eqref{equ:G} becomes $z = x(z+1)^r,$ which is equivalent to \[z = \left(\ds \frac{x}{(1+x)^r} \right)^{<-1>}.\]
Hence, using the Lagrange inversion formula \cite[Corollary 5.4.3]{stanleyec2}, we find that $f(n)$, the coefficient of $x^n$ in $\log (1+z)$, is given by
\begin{equation}\label{equ:gr} f(n) = \frac{1}{n} [t^{n-1}] \left(\frac{1}{1+t} \left((1+t)^r\right)^n\right) = \frac{1}{n} \binom{rn-1}{n-1} = \frac{1}{rn} \binom{rn}{n}.
\end{equation}
We also rewrite \eqref{equ:G2H3} using $z$:
\[ H = \frac{F}{F- r(F-1)} = \frac{1+z}{1+z - rz} = \frac{1+z}{1- (r-1)z}.\]
Hence,
\[ \log H = \log (1+z) - \log (1 - (r-1) z).\]
Therefore, using the Lagrange inversion formula, we find that $h(n),$ the coefficient of $x^n$ in $\log (1+z) - \log (1- (r-1)z)$, is given by
\begin{align} h(n) =& \frac{1}{n} [t^{n-1}] \left( \frac{1}{1+t} + \frac{1}{1-(r-1)t}  \right) \left( (1+t)^r \right)^n \nonumber  \\
	=& f(n) + \frac{1}{n} [t^{n-1}] \left( (1+t)^{rn} \sum_{i \ge 0} (r-1)^i t^i \right)\nonumber \\
	=& f(n) + \frac{1}{n}  \sum_{i = 0}^{n-1} \binom{rn}{i} (r-1)^{n-1-i}\label{equ:hr}  
\end{align}
Therefore, applying \eqref{equ:gr} and \eqref{equ:hr} to \eqref{equ:varphitau}, we obtain the following result.

\begin{lem}
	Suppose $m=1$ and $\tau = \left( (\{1\}, r) \right).$ Then
	\[ \varphi_\tau(n, \beta) = \frac{(-1)^{n+1}}{n} \left(\frac{1}{r} \binom{rn}{n} \beta -   \sum_{i = 0}^{n-1} \binom{rn}{i} (r-1)^{n-1-i} \right).\]
\end{lem}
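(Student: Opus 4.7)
The plan is to apply the general formula \eqref{equ:varphitau} directly, which expresses $\varphi_\tau(\bn,\beta)$ in terms of the coefficient $f(\bn)$ of $\x^{\bn}$ in $\log F_\tau^{(1)}(\x)$ and the coefficient $h(\bn)$ of $\x^{\bn}$ in $\log H_\tau(\x)$. Since $m=1$, the vector $\bn$ collapses to a single nonnegative integer $n$ and $(-1)^{\bn}=(-1)^n$, so the formula reads $\varphi_\tau(n,\beta) = (-1)^n\bigl(-f(n)\beta + h(n) - f(n)\bigr)$. The closed-form expressions for $f(n)$ and $h(n)$ needed for this specialization have already been assembled in equations \eqref{equ:gr} and \eqref{equ:hr}, so the proof reduces to a substitution and a cosmetic regrouping of signs.

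More concretely, I would first plug in $f(n) = \frac{1}{rn}\binom{rn}{n}$ from \eqref{equ:gr} to get the coefficient of $\beta$ as $\frac{(-1)^{n+1}}{rn}\binom{rn}{n}$. Next, using \eqref{equ:hr}, the constant-in-$\beta$ term $h(n) - f(n)$ simplifies to $\frac{1}{n}\sum_{i=0}^{n-1}\binom{rn}{i}(r-1)^{n-1-i}$, so multiplying by $(-1)^n$ yields $\frac{(-1)^n}{n}\sum_{i=0}^{n-1}\binom{rn}{i}(r-1)^{n-1-i}$. Pulling out the common factor $\frac{(-1)^{n+1}}{n}$ from the sum of these two contributions produces the stated identity.

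There is no genuine obstacle remaining at this stage: the conceptual work, namely establishing the functional equation $z = x(1+z)^r$ for $z = F_\tau^{(1)}(x) - 1$, deriving the closed form $H = F/(F - r(F-1))$ from the defining relations \eqref{equ:G} and \eqref{equ:G2H2}, and then applying the Lagrange inversion formula to extract $f(n)$ and $h(n)$, was all resolved prior to the statement of the lemma. The final step is purely algebraic bookkeeping, and the only thing worth a sanity check is that the sign conventions in $\varphi_\tau(n,\beta) = (-1)^n\bigl(-f(n)\beta + h(n) - f(n)\bigr)$ combine with $(-1)^n = -(-1)^{n+1}$ to give the correct prefactor $\frac{(-1)^{n+1}}{n}$ in front of the bracket.
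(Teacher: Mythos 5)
Your proof is correct and follows exactly the same route as the paper, which likewise concludes by substituting \eqref{equ:gr} and \eqref{equ:hr} into \eqref{equ:varphitau}. The sign bookkeeping you outline is the whole remaining content, and you have done it correctly.
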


Recall for the setup in the above lemma, $\varphi_\tau(n, \beta)$ is the coefficient of $x^n$ in the logarithm of the generating function $\cP_{\tau, \beta}(x)$ defined in \eqref{equ:excP}.
Hence, the above lemma is equivalent to the following lemma.
\begin{lem}\label{lem:equivalgen}
For any unknown $\beta,$ and any positive integer $r,$ the coefficient of $x^n$ in 
	\[ \log \left(1 + \sum_{n \in \P} \binom{\beta-rn + n}{n} x^n\right)\]
	is given by
	\[ \frac{(-1)^{n+1}}{n} \left(\frac{1}{r} \binom{rn}{n} \beta -   \sum_{i = 0}^{n-1} \binom{rn}{i} (r-1)^{n-1-i} \right).\]
      \end{lem}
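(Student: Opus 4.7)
The plan is to observe that Lemma \ref{lem:equivalgen} is an immediate transcription of the preceding lemma through the generating function point of view. First I would identify the series inside the logarithm as $\sP_{\tau,\beta}(x)$ for the case $m=1$, $\tau = \bigl((\{1\},r)\bigr)$. Applying formula \eqref{equ:polyP} in this case, the only $\tau$-compatible contingency table of margin $(n,c)$ has $c=n$ and the single entry $a_{1,1}=n$, so $p_\tau(n,\beta) = \binom{\beta - rn + n}{n}$ as polynomials in $\beta$. Hence the inner sum is $\sP_{\tau,\beta}(x)$. Then by the defining relation \eqref{equ:log}, the coefficient of $x^n$ in $\log \sP_{\tau,\beta}(x)$ is precisely $\varphi_\tau(n,\beta)$, and the preceding lemma supplies the closed form. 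Under this perspective, Lemma \ref{lem:equivalgen} is a genuine restatement of its predecessor rather than a new result.

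For readers who prefer a self-contained derivation bypassing the combinatorial machinery of $(\tau,\bn)$-words, I would reproduce the same chain of deductions in purely generating-function language. The key algebraic inputs are: the factorization $\sP_{\tau,\beta}(x) = F(-x)^{-\beta-1} H(-x)$ where $F$ satisfies $F = 1 + xF^r$; and the identity $H = F/\bigl(F - r(F-1)\bigr)$, obtained by differentiating the functional equation for $F$ and eliminating $F'$. Writing $z = F-1$, the equation $F = 1 + xF^r$ is equivalent to $z = \bigl(x/(1+x)^r\bigr)^{\langle -1\rangle}$, which puts the problem in the standard form for single-variable Lagrange inversion.

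The only genuine computation is then the extraction of $f(n) = [x^n]\log F(x) = \frac{1}{rn}\binom{rn}{n}$ and $h(n) = [x^n]\log H(x) = f(n) + \frac{1}{n}\sum_{i=0}^{n-1}\binom{rn}{i}(r-1)^{n-1-i}$ via Lagrange inversion applied to $\log(1+z)$ and $\log\bigl(1-(r-1)z\bigr)$ respectively. Substituting into the identity $\varphi_\tau(n,\beta) = (-1)^n\bigl[-f(n)\beta + h(n) - f(n)\bigr]$ (which follows from taking the logarithm of the factorization term-by-term) yields exactly the claimed formula. The main obstacle is spotting the clean rational identity expressing $H$ in terms of $F$; everything else is routine Lagrange inversion bookkeeping.
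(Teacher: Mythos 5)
Your proposal is correct and follows essentially the same path as the paper: identify the series inside the logarithm as $\sP_{\tau,\beta}(x)$ for $m=1$, $\tau=((\{1\},r))$, invoke the factorization $\sP_{\tau,\beta}(x)=F(-x)^{-\beta-1}H(-x)$ with $F=1+xF^r$ and $H=F/\bigl(F-r(F-1)\bigr)$, extract $f(n)$ and $h(n)$ by Lagrange inversion, and substitute into $\varphi_\tau(n,\beta)=(-1)^n\bigl(-f(n)\beta+h(n)-f(n)\bigr)$. The paper itself presents Lemma \ref{lem:equivalgen} as an equivalent restatement of the lemma immediately preceding it, which is exactly the framing you use.
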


	Therefore, we are actually able to compute $\varphi_\tau(n, \bbeta)$ for any $\tau$ that contains only one type of edges.

	\begin{lem}\label{lem:m=1}
	Suppose $m=1$ and $\tau = ( ( \{1, 2, \dots, \ell\}, r)).$ Then
	\[ \varphi_\tau(n, \bbeta) = \frac{(-1)^{n+1}}{n} \left(\frac{1}{r \ell} \binom{r \ell n}{n} \left( \left(\sum_{j=1}^\ell \beta_j \right) + (\ell-1)\right)  -   \sum_{i = 0}^{n-1} \binom{r\ell n}{i} (r\ell-1)^{n-1-i} \right).\]
	\end{lem}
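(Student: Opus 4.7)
The plan is to reduce the general $\ell$ case to the $\ell=1$ case already settled just above (Lemma \ref{lem:equivalgen}), via a simple concatenation bijection on $(\tau, n)$-words combined with the reciprocity \eqref{equ:recip}. Let $\tau = ((\{1, \dots, \ell\}, r))$ as in the statement and introduce the auxiliary pair $\tau^* = ((\{1\}, r\ell))$. Note that for $\tau$ we have $I_1 = \{1,\dots,\ell\}$, so every word $w_j$ in a $(\tau,n)$-word is freely a string in the two-letter alphabet $\{s_0,s_1\}$, subject only to the global constraint that there are $n$ occurrences of $s_1$ in total. For $\tau^*$, a $(\tau^*,n)$-word is a single string over $\{s_0,s_1\}$ containing $n$ copies of $s_1$.

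First I will establish the key combinatorial identity
\[ |S_\tau(n; L_1,\dots, L_\ell)| = |S_{\tau^*}(n; L_1 + \cdots + L_\ell)|, \]
via the obvious concatenation bijection $(w_1,\dots,w_\ell) \mapsto w_1 w_2 \cdots w_\ell$ (which is a bijection once the lengths $L_j$ are fixed). Since $\lambda_j(\tau,n) = rn$ for every $j \in \{1,\dots,\ell\}$ while $\lambda_1(\tau^*, n) = r\ell n$, specializing $L_j = t_j + rn$ translates this into
\[ s_\tau(n, \t) = s_{\tau^*}\bigl(n, t_1 + \cdots + t_\ell\bigr) \qquad \text{for all } \t \in \N^\ell, \]
and then, by Zariski density of $\N^\ell$ in $\C^\ell$, the identity of polynomials holds for all $\t \in \Z^\ell$.

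Next I will apply the reciprocity \eqref{equ:recip} to both sides. Setting $\t = -\bbeta - \1$ gives $\sum t_j = -\sum \beta_j - \ell$, so the identity above becomes
\[ (-1)^n p_\tau(n, \bbeta) = (-1)^n p_{\tau^*}\bigl(n, \textstyle\sum_{j=1}^\ell \beta_j + \ell - 1\bigr), \]
i.e. $p_\tau(n,\bbeta)$ depends on $\bbeta$ only through $\gamma := \sum_{j=1}^\ell \beta_j + (\ell-1)$, and equals $p_{\tau^*}(n,\gamma)$. Consequently the generating functions agree: $\sP_{\tau,\bbeta}(x) = \sP_{\tau^*,\gamma}(x)$, and therefore
\[ \varphi_\tau(n,\bbeta) \;=\; \varphi_{\tau^*}(n,\gamma) \]
by taking logarithms and comparing coefficients of $x^n$.

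Finally I invoke the already-proved $\ell=1$ formula (Lemma \ref{lem:equivalgen}, with $r$ replaced by $r\ell$) applied at $\beta = \gamma$: this yields
\[ \varphi_\tau(n,\bbeta) = \frac{(-1)^{n+1}}{n}\left(\frac{1}{r\ell}\binom{r\ell n}{n}\gamma - \sum_{i=0}^{n-1}\binom{r\ell n}{i}(r\ell - 1)^{n-1-i}\right), \]
and substituting $\gamma = \sum_j \beta_j + (\ell-1)$ gives precisely the expression in Lemma \ref{lem:m=1}. There is no serious obstacle here; the only point requiring care is the bookkeeping of the length condition in the concatenation bijection to make sure the shift $\lambda_j(\tau,n) = rn$ for each of the $\ell$ coordinates combines correctly with $\lambda_1(\tau^*,n) = r\ell n$, which is what produces the clean relationship $\gamma = \sum \beta_j + (\ell-1)$ rather than something messier.
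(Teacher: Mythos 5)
Your proof is correct but takes a genuinely different route from the paper's. The paper computes $p_\tau(n,\bbeta)$ directly from the definition of $P_\bbeta$: after forming $\ext_\bbeta(G_\tau(n))$, it observes that the $\sum_j(\beta_j - rn)$ short unweighted edges and the $\ell-1$ interior vertices $1,\dots,\ell-1$ have a fixed order in any $\bbeta$-extended ordering, so counting orderings amounts to interleaving the $n$ weighted edges among $\bigl(\sum_j\beta_j\bigr) - r\ell n + (\ell-1)$ other items, giving $p_\tau(n,\bbeta) = \binom{\gamma - r\ell n + n}{n}$ with $\gamma = \sum_j\beta_j + (\ell-1)$, and then Lemma \ref{lem:equivalgen} finishes. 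You instead work on the word side: the concatenation map $(w_1,\dots,w_\ell)\mapsto w_1\cdots w_\ell$ is a length-respecting bijection between $(\tau,n)$-words and $(\tau^*,n)$-words for $\tau^*=((\{1\},r\ell))$, yielding $s_\tau(n,\t)=s_{\tau^*}\bigl(n,\sum_j t_j\bigr)$ as polynomials (Zariski density handles the extension to $\Z^\ell$), and then a double application of the reciprocity \eqref{equ:recip} converts this into $p_\tau(n,\bbeta)=p_{\tau^*}(n,\gamma)$ with the same $\gamma$, whence $\varphi_\tau(n,\bbeta)=\varphi_{\tau^*}(n,\gamma)$ and Lemma \ref{lem:equivalgen} with $r\to r\ell$ applies. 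The paper's version is more elementary and self-contained; yours better showcases the $(\tau,\bn)$-word formalism and makes transparent why the answer depends on $\bbeta$ only through the single scalar $\gamma$. Both are clean, and both funnel into the same application of Lemma \ref{lem:equivalgen}.
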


	\begin{proof}
	It is clear that we have
	\[ \lambda_j(G_\tau(n)) = \begin{cases} rn & \text{if $1 \le j \le \ell$}\\  0 & \text{if $j > \ell$.} \end{cases} \]
	  For any $\bbeta = (\beta_1, \dots, \beta_\ell) \ge (rn, \dots, rn),$ to create $\ext_\bbeta\left( G_\tau(n) \right),$ we add $\beta_j - rn$ unweighted edges connecting vertices $j-1$ and $j$ for any $1 \le j \le \ell.$ In total, we have 
	  \[ \sum_{j=1}^\ell \left( \beta_j - rn\right) = \left(\sum_{j=1}^\ell \beta_j \right)- r\ell n  \]
	unweighted edges between vertices $0$ and $\ell.$

	One sees that $P_\bbeta\left( G_\tau(n) \right)$ is the number of ways to place the $n$ edges of weight $r$ between the vertices $0$ and $\ell.$ Note that the other elements that are between vertices $0$ and $\ell$ are these $\left(\sum_{j=1}^\ell \beta_j \right) - r\ell n$ unweighted edges and $\ell-1$ vertices: $1, 2, \dots, \ell-1,$ where the order of these two kinds of elements are fixed in any $\bbeta$-extended ordering. Hence, we conclude that
	\[ p_\tau(n,\bbeta) = P_\bbeta\left( G_\tau(n) \right) = \binom{\left(\sum_{j=1}^\ell \beta_j \right)  - r\ell n + (\ell -1) + n}{n}.\]
	Therefore, $\varphi_\tau(n,\bbeta)$ is the coefficient of $x^n$ in the generating function
	\[ \log\left(1 + \sum_{n \in P} p_\tau(n,k) x^n\right) = \log\left(1 + \sum_{n \in \P} \binom{ \left(\sum_{j=1}^\ell \beta_j \right)+ (\ell -1)  - r\ell n + n}{n} x^n\right).\]
	Then the conclusion follows from Lemma \ref{lem:equivalgen}.
	\end{proof}

	\begin{ex}
	  Suppose $\tau = ( ( \{1, 2\}, r)).$ Then $G_\tau(n)$ is the long-edge graph with $n$ edges of weight $r$ connecting vertices $0$ and $2$. By Lemma \ref{lem:m=1}, we have
	  \begin{align*} \varphi_\tau(n,\bbeta	) =&  \frac{(-1)^{n+1}}{n} \left(\frac{1}{2r} \binom{2 r n}{n} \left( \beta_1+\beta_2 + 1 \right)  -   \sum_{i = 0}^{n-1} \binom{2 r n}{i} (2r-1)^{n-1-i} \right) 
	  \end{align*}
Assume further that $r=1.$ Then
\begin{align}
	\varphi_\tau(n,\bbeta) =&  \frac{(-1)^{n+1}}{n} \left(\frac{1}{2}\binom{2 n}{n} \left( \beta_1+\beta_2  + 1\right)  -   \sum_{i = 0}^{n-1} \binom{2 n}{i} \right) \nonumber \\
	=& \frac{(-1)^{n+1}}{n} \left(\frac{1}{2}\binom{2 n}{n} \left( \beta_1 + \beta_2 \right) + \binom{2n}{n}  -   2^{2n-1} \right), 	 \label{equ:m1l2r1} 
\end{align}
where the second equality follows from the identity 
\[ \sum_{i=0}^{n-1} \binom{2n}{i} = \frac{1}{2} \left( \sum_{i=0}^{n-1} \binom{2n}{i} + \sum_{i=n+1}^{2n} \binom{2n}{i} \right) = \frac{1}{2} \left( \sum_{i=0}^{2n} \binom{2n}{i} -  \binom{2n}{n} \right) = \frac{1}{2} \left( 2^{2n} - \binom{2n}{n} \right).\]

In particular, plugging $n=1$ and $2$ in \eqref{equ:m1l2r1} we get
\begin{align*}
	\varphi_\tau(1,\bbeta) =& \frac{(-1)^{1+1}}{1} \left(\frac{1}{2}\binom{2}{1} \left( \beta_1+\beta_2 \right) + \binom{2}{1}  -   2^{2-1} \right) = \beta_1 + \beta_2, 	  \\
	\varphi_\tau(2,\bbeta) =& \frac{(-1)^{2+1}}{2} \left(\frac{1}{2}\binom{4}{2} \left( \beta_1+\beta_2\right) + \binom{4}{2} -   2^{4-1} \right)=   -\frac{1}{2}(3\beta_1 + 3 \beta_2 - 2).
\end{align*}
Note that $G_\tau(1)$ and $G_\tau(2)$ are the second and fifth templates which appeared in Table \ref{tab:templates}. The functions we obtain agree with those listed in Table \ref{tab:templates}.
	\end{ex}

\bibliographystyle{amsplain}
\bibliography{gen}

\end{document}